\let\nc\newcommand
\let\renc\renewcommand
\theoremstyle{plain}
\newtheorem{thm}{Theorem}
\newtheorem{prop}[thm]{Proposition}
\newtheorem{cor}[thm]{Corollary}
\newtheorem{lem}[thm]{Lemma}
\newtheorem{conjecture}[thm]{Conjecture}
\newtheorem{conj}[thm]{Conjecture}
\theoremstyle{definition}
\newtheorem{defn}[thm]{Definition}
\newtheorem{example}[thm]{Example}
\newtheorem{remark}[thm]{Remark}
\newtheorem{claim}[thm]{Claim}
\newtheorem{assumption}[thm]{Assumption}
\numberwithin{thm}{section}
\let\nc\newcommand
\let\renc\renewcommand
\nc{\bthm}{\begin{thm}}
\nc{\ethm}{\end{thm}}
\nc{\blem}{\begin{lem}}
\nc{\elem}{\end{lem}}
\nc{\bcor}{\begin{cor}}
\nc{\ecor}{\end{cor}}
\newcommand{\beq}{\begin{equation}\label}
\nc{\eeq}{\end{equation}}
\nc{\bprop}{\begin{prop}}
\nc{\eprop}{\end{prop}}
\nc{\bdefn}{\begin{defn}}
\nc{\edefn}{\end{defn}}
\numberwithin{equation}{subsection}
\newcommand{\Lmod}[1]{#1\text{-}{\mathsf{mod}}}
\newcommand{\idot}{{\:\raisebox{2pt}{\text{\circle*{1.5}}}}}
\DeclareMathOperator{\Ker}{\mathrm{Ker}}
\DeclareMathOperator{\Ad}{\mathrm{Ad}}
\DeclareMathOperator{\ann}{\mathtt{Ann}}
\DeclareMathOperator{\codim}{\mathrm{codim}}
\DeclareMathOperator{\Spec}{\mathrm{Spec}}
\DeclareMathOperator{\Hom}{\mathrm{Hom}}
\DeclareMathOperator{\GL}{\mathrm{GL}}
\renewcommand{\o}{\otimes }
\nc{\Z}{\mathbb{Z}}
\newcommand{\N}{\mathbb{N}}
\newcommand{\C}{\mathbb{C}}
\newcommand{\h}{\mathfrak{h}}
\nc{\rank}{\mathrm{rank} \,}
\nc{\ds}{\dots}
\let\mc\mathcal
\let\mf\mathfrak
\nc{\loccit}{{\textit{loc. cit. }}}
\nc{\HW}{\bar{H}_{\mathbf{c}}(W)}
\nc{\HK}{\bar{H}_{\mathbf{c}}(K)}
\nc{\HtK}{\widetilde{H}_{\mathbf{c}}(K)}
\nc{\CMW}{\mathsf{CM}_{\mbf{c}}(W)}
\nc{\CMK}{\mathsf{CM}_{\mbf{c}}(K)}
\nc{\mbf}{\mathbf}
\nc{\LK}{\mathsf{Irr}(K)}
\nc{\LW}{\mathsf{Irr}(W)}
\nc{\Res}{\mathsf{Res} \, }
\nc{\Ind}{\mathsf{Ind} \, }
\nc{\cont}{\mathrm{cont}}
\nc{\eWb}{\mathbf{e}_{W_b}}
\nc{\eW}{\mathbf{e}_{W}}
\nc{\msf}{\mathsf}
\nc{\Ui}{\mc{U}_{i,+}}
\nc{\Uone}{\mc{U}_{1,+}}
\nc{\Utwo}{\mc{U}_{2,+}}
\nc{\pa}{\partial}
\nc{\Irr}{\mathsf{Irr}}
\nc{\sm}{\mathrm{sm}}
\nc{\Hilb}{\mathrm{Hilb}}
\renewcommand{\>}{\rangle}
\nc{\bVerma}{\overline{\Delta}}
\nc{\Verma}{\Delta}
\nc{\Fix}{\mathrm{Fix}}
\nc{\bpr}{\begin{proof}}
\nc{\epr}{\end{proof}}
\nc{\s}{\mathfrak{S}}
\nc{\sgn}[1]{\mathrm{sgn}(#1)}
\nc{\g}{\mathfrak{g}}
\nc{\sing}{\mathrm{sing}}
\nc{\Mat}{\mathrm{Mat}}
\nc{\whh}{\widehat{\C[\h^*]}}
\nc{\whH}{\widehat{H}}
\nc{\wh}[1]{\widehat{#1}}
\nc{\Coh}{\mathsf{Coh}}
\nc{\Be}{\mathcal{B}}
\nc{\id}{\mathrm{id}}
\nc{\opp}{\mathrm{opp}}
\nc{\rdet}{\mathrm{rdet}}
\nc{\bigD}{\widetilde{\mathscr{D}}}
\nc{\CM}{CM}
\nc{\CMM}{\overline{CM}}
\nc{\CMMB}{\widetilde{CM}}
\nc{\PGL}{\mathrm{PGL}}
\nc{\tx}{\tilde{x}}
\nc{\hra}{\hookrightarrow}
\nc{\rightsim}{\stackrel{\sim}{\longrightarrow}}
\nc{\ra}{\rightarrow}
\nc{\red}{\mathrm{red}}
\renc{\t}{\mf{t}}
\nc{\dH}{\mathcal{H}}
\nc{\tit}{\textit}
\nc{\A}{\mathbb{A}}
\nc{\Grel}{\mc{G}^{\mathrm{rel}}}
\nc{\Grat}{\mc{G}^{\mathrm{rat}}}
\nc{\tGrat}{\widetilde{\mc{G}}^{\mathrm{rat}}}
\nc{\Squo}[1]{\A^{(#1)}}
\nc{\twist}{\mathrm{twist}}
\nc{\Cd}{\mc{C}}
\nc{\Span}{\mathrm{Span} \ }
\nc{\Grass}{\mathrm{Gr}}
\nc{\Grad}{\mathrm{G}^{\mathrm{ad}}}
\nc{\Graad}{\mathrm{G}^{\mathrm{Ad}}}
\nc{\lab}{\label}
\nc{\Supp}{\mathrm{Supp}}
\nc{\Wr}{\mathrm{Wr}}
\nc{\QE}{\mathcal{Q}\mathcal{E}}
\renc{\H}{H}
\nc{\ZH}{Z}
\nc{\cX}{X}
\nc{\mb}{\mathbf}
\nc{\DVerma}{\nabla}
\nc{\diag}{\mathrm{diag} \ }
\nc{\Cs}{\C^{\times}}
\nc{\reg}{\mathrm{reg}}
\nc{\bp}{\boldsymbol{\mathsf{p}}}
\nc{\Quasi}{\mathcal{Q}}
\nc{\ueta}{\underline{\eta}}
\nc{\uy}{\mbf{y}}
\nc{\ux}{\mbf{x}}
\nc{\uz}{\mbf{z}}
\nc{\ur}{\mbf{r}}
\nc{\op}{\mathrm{op}}
\nc{\Fo}{\msf{F}}
\nc{\Bi}{\msf{B}}
\nc{\tQGr}{\widetilde{\mathcal{Q}\Grass}}
\nc{\QGr}{\mathcal{Q}\Grass}
\nc{\QErel}{\QE^{\mathrm{rel}}}
\nc{\blambda}{\boldsymbol{\lambda}}
\nc{\bmu}{\boldsymbol{\mu}}
\nc{\ba}{\boldsymbol{a}}
\nc{\bb}{\boldsymbol{b}}
\nc{\bx}{\boldsymbol{x}}
\nc{\bOmega}{\boldsymbol{\Omega}}
\nc{\bmho}{\boldsymbol{\mho}}
\nc{\OmCh}{\Omega}
\nc{\OmCM}{\Omega^{\mathrm{cm}}}
\nc{\OmGrad}{\Omega^{\mathrm{ad}}}
\nc{\OmGr}{\Omega^{\mathrm{qe}}}
\nc{\mOCh}{\mho}
\nc{\mOCM}{\mho^{\mathrm{cm}}}
\nc{\mOGrad}{\mho^{\mathrm{ad}}}
\nc{\mOGr}{\mho^{\mathrm{qe}}}
\nc{\NN}{\mathsf{N}}
\nc{\Der}{\mathrm{Der}}
\begin{document}
\title{Rational Cherednik algebras and Schubert cells}

\author{Gwyn Bellamy}
\address{School of Mathematics and Statistics, University of Glasgow, University Gardens, Glasgow G12 8QW}
\email{gwyn.bellamy@glasgow.ac.uk}

\begin{abstract} 
The representation theory of rational Cherednik algebras of type $\mbf{A}$ at $t = 0$ gives rise, by considering supports, to a natural family of smooth Lagrangian subvarieties  of the Calogero-Moser space. The goal of this article is to make precise the relationship between these Lagrangians and Schubert cells in the adelic Grassmannian. In order to do this we show that the isomorphism, as constructed by Etingof and Ginzburg, from the spectrum of the centre of the rational Cherednik algebra to the Calogero-Moser space is compatible with the factorization property of both of these spaces. As a consequence, the space of homomorphisms between certain representations of the rational Cherednik algebra can be identified with functions on the intersection Schubert cells.  
\end{abstract}
\maketitle

\normalsize

\section{Introduction}

In this article we explore certain aspects of the close relationship between rational Cherednik algebras and the Calogero-Moser integrable system. It was shown in the original paper \cite{EG}, where rational Cherednik algebras were first defined by Etingof and Ginzburg, that the centre of the rational Cherednik algebra of type $\mbf{A}$, at $t = 0$, is isomorphic to the coordinate ring of Wilson's completion of the Calogero-Moser phase space. The Calogero-Moser space is also closely related to the adelic Grassmannian and rational solutions of the KdV hierarchy. As such, natural objects of study of the KdV hierarchy such as the $\tau$ and Baker functions and Schubert cells appear naturally in the setting of the Calogero-Moser space. The purpose of this article is to try and understand how these objects manifest themselves in terms of the representation theory of rational Cherednik algebras. 

In the remainder of the introduction, we outline the main results of the paper.

\subsection{The rational Cherednik algebra}

The  rational Cherednik algebra $H_n$ associated to the symmetric group $\s_n$ at $t = 0$ and non-zero $\mbf{c}$ is a finite module over its centre $Z_n$. The spectrum $\cX_n$ of the affine domain $Z_n$ is a symplectic manifold. For each $p \in \h^*$ and $\blambda \in \Irr (\s_p)$, there exists a natural induced $H_n$-module $\Delta(p,\blambda)$, a \textit{Verma module}. In \cite{End} it was shown that the support  $\Omega_{\bb,\blambda}$, depending only on the image $\bb$ of $p$ in $\h^* / \s_n$, of these Verma modules is a smooth Lagrangian subvariety $\cX_n$. It is these Lagrangians that we aim to study in this paper.

\subsection{The Calogero-Moser space}

Wilson's completion of the Calogero-Moser space can be described as follows, see section \ref{sec:maincm} for details. Let $\CMM_n$ be the set of all pairs of $n \times n$, complex matrices $(X,Y)$ such that the rank of $[X,Y] + I_n$ is one. The group $\PGL_n(\C)$ acts on the space $\CMM_n$ and the Calogero-Moser space $\CM_n$ is defined to be the categorical quotient $\CMM_n /\!/ \PGL_n$. It is a smooth, $2n$-dimensional affine variety. As noted above, Etingof and Ginzburg constructed an isomorphism $\cX_n \rightsim \CM_n$. On the other hand, Wilson showed that the union over all $n$ of the Calogero-Moser spaces $\CM_n$ can be identified with a certain infinite dimensional space, the \textit{adelic Grassmannian}. Thus, there is an embedding of the space $\cX_n$ into this adelic Grassmannian. In order to be able to describe the image of the subspaces $\OmCh_{\bb,\blambda}$ in the adelic Grassmannian, it is more convenient to describe the adelic Grassmannian in terms of certain spaces of quasi-exponentials. 

A holomorphic function $f$ on the complex plane that can be expressed as  
$$
f(x) = e^{b_1 x} g_1(x) + \ds + e^{b_k x} g_k(x),
$$
where $b_i \in \C$ and $g_i(x)$ is a polynomial, is called a quasi-exponential function. Let $\Quasi$ denote the space of all quasi-exponential functions. A finite dimensional subspace $C$ of $\Quasi$ is called \textit{homogeneous} if $C = \bigoplus_{b \in \C} C_b$, where $C_b$ is spanned by functions of the form $e^{b x} g(x)$ for some polynomial $g(x)$. The set of all finite dimensional, homogeneous subspaces of $\Quasi$ is denoted $\QGr$. Using the Wronskian, one can pick out certain distinguished spaces in $\QGr$ called \textit{canonical} spaces. The set of all canonical spaces is denoted $\QE$. Wilson showed that each point in the Calogero-Moser space $\CM_n$ can labeled by a canonical space $C \in \QE$. As a consequence we have bijections
$$
\xymatrix{
\cX_n \ar[r]^{\sim} \ar@/_1pc/[rr]_{\nu_n} & \CM_n \ar[r]^{\sim} & \QE_n
}
$$
where $\QE_n$ is the set of all $n$-dimensional spaces in $\QE$. One of the main goals of this paper is to describe the image of the Lagrangians $\OmCh_{\bb,\blambda}$ under $\nu_n$. 

For $\bb = \sum_{i =1}^k n_i b_i \in \h^* / \s_n$, we define 
$$
\Grass_{\bb}(\QGr) = \Grass_{n_1} \left( e^{b_1 x} \C[x]_{2 n_1} \right) \times \cdots \times \Grass_{n_k} \left( e^{b_k x} \C[x]_{2 n_k} \right),
$$
a projective subvariety of $\QGr$, where $\C[x]_{k}$ is the space of all polynomials of degree less that $k$ and $\Grass_{n} \left( e^{b x} \C[x]_{2 n} \right)$ is the Grassmannian of $n$-dimensional planes in $e^{b x} \C[x]_{2 n}$. Each Grassmannian $\Grass_{n_i} \left(e^{b_i x} \C[x]_{2 n_i} \right)$ has a natural stratification by Schubert cells $\OmGr_{b_i, \lambda^{(i)}}$, which are labeled by all those partitions $\lambda^{(i)}$ that fit into a square with sides of length $2n_i$; see section \ref{sec:cells} for the precise definition. Thus, if $\blambda = (\lambda^{(1)}, \ds, \lambda^{(k)})$ with $\lambda^{(i)} \vdash n_i$, then 
$$
\OmGr_{\bb, \blambda} := \OmGr_{b_1, \lambda^{(1)}} \times \cdots \times \OmGr_{b_k, \lambda^{(k)}}
$$
is a locally closed subvariety of $\Grass_{\bb}(\QGr)$. 

\begin{thm}\label{thm:main}
Let $p \in \h^*$ and $\blambda$ an irreducible $\s_p$-module. Then, the map $\nu_n$ restricts to an isomorphism \textit{of varieties} 
$$
\nu_n : \OmCh_{\bb,\blambda} \rightsim \OmGr_{\bb,\blambda^t},
$$
where $\bb$ is the image of $p$ in $\h^* / \s_n$, and $\blambda^t$ denotes componentwise transpose. 
\end{thm}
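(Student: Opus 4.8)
The plan is to reduce the statement, using the factorization structures on $\cX_n$ and on $\QE$ and their compatibility under $\nu_n$ established in the earlier sections, first to the case where $\bb$ is supported at a single point and then to the case $\bb = n\cdot 0$, which is treated by a direct computation. So write $p = \sum_{i=1}^{k} n_i b_i$ with the $b_i$ pairwise distinct and $\blambda = (\lambda^{(1)},\dots,\lambda^{(k)})$, $\lambda^{(i)}\vdash n_i$, so that $\s_p = \s_{n_1}\times\cdots\times\s_{n_k}$. Restricting to the parabolic subgroup $\s_p$ carries $\Verma(p,\blambda)$ to the external tensor product $\Verma(0,\lambda^{(1)})\boxtimes\cdots\boxtimes\Verma(0,\lambda^{(k)})$ of standard modules over $\H_{n_1},\dots,\H_{n_k}$; combined with the factorization of $\cX_n$ along the stratum of type $(n_1,\dots,n_k)$ this gives, on the formal (or \'etale) neighbourhood of that stratum,
$$
\OmCh_{\bb,\blambda} \;=\; \prod_{i=1}^{k}\OmCh_{n_i b_i,\,\lambda^{(i)}}.
$$
Since $\nu_n$ identifies this neighbourhood with $\prod_i \cX_{n_i}$ compatibly with the factorization of $\QE$ by exponents, and since $\OmGr_{\bb,\blambda^t} = \prod_i \OmGr_{b_i,(\lambda^{(i)})^t}$ by definition, it suffices to treat $k=1$, i.e.\ $\bb = n b$; and the translation automorphism of $\H_n$ --- multiplication by $e^{-bx}$ on the $\QE$ side, which moves the single exponent $b$ to $0$ --- reduces us further to $b = 0$.

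It then remains to show that $\nu_n$ carries the support $\OmCh_{0,\lambda}$ of the standard module $\Verma(0,\lambda) = \C[\h]\otimes\lambda$ ($\lambda\vdash n$) onto the Schubert cell $\OmGr_{0,\lambda^t}\subset\Grass_n(\C[x]_{2n})$. Since the positive-degree part $\C[\h^*]^{\s_n}_{+}$ acts locally nilpotently on $\Verma(0,\lambda)$, the support $\OmCh_{0,\lambda}$ lies in the zero fibre of the map $\cX_n\to\h^*/\s_n$ coming from $\C[\h^*]^{\s_n}\hookrightarrow Z_n$; this fibre is a union of Lagrangians, each the closure of an attracting cell of the grading $\Cs$-action on $\cX_n$, the cells being indexed by the $\Cs$-fixed points of $\cX_n$, which are labelled by partitions of $n$, and by \cite{End} the piece $\OmCh_{0,\lambda}$ is smooth and irreducible, hence one such attracting-cell closure. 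On the other side, $\nu_n$ maps the zero-exponent locus of $\cX_n$ to the spaces $C\subset\C[x]$ in $\QE_n$, the relevant finite-dimensional model being $\Grass_n(\C[x]_{2n})$ with the $\Cs$-action scaling $x$; its fixed points are the coordinate subspaces, again labelled by partitions, and its attracting cells are precisely the Schubert cells $\OmGr_{0,\mu}$. So the theorem comes down to checking that $\nu_n$ matches the attracting cell of the fixed point labelled by $\lambda$ with the one labelled by $\lambda^t$. This is the step I expect to be the main obstacle: it requires identifying the images under $\nu_n$ of the torus-fixed points --- via the explicit Etingof--Ginzburg formulas, or equivalently via the Baker function of the corresponding space $C$ --- and then verifying that $\nu_n$ intertwines the two $\Cs$-actions with \emph{opposite} weights, so that attracting cells are carried to attracting cells only after transposing the Young diagram; pinning down all the sign and normalization conventions here is the delicate part.

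Finally, to upgrade the base-case bijection to an isomorphism of varieties, recall that by the earlier sections $\nu_n$ is algebraic: on the locus of $\cX_n$ of a fixed exponent type $\bb$ it is a morphism into $\Grass_{\bb}(\QGr)$, and Wilson's reconstruction of the Calogero--Moser matrices from a space $C$ provides an algebraic inverse on each stratum. Restricting these maps to the base case yields mutually inverse morphisms between $\OmCh_{0,\lambda}$, which is smooth of dimension $n$ by \cite{End}, and the Schubert cell $\OmGr_{0,\lambda^t}$, an affine space of dimension $|\lambda^t| = n$; hence $\nu_n|_{\OmCh_{0,\lambda}}$ is an isomorphism of varieties. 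The factorization reduction of the first paragraph then promotes this to the statement for general $\bb$, completing the proof.
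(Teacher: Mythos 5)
Your high-level strategy matches the paper's: reduce by factorization and translation to a single block at $\bb=n\cdot 0$, then use $\Cs$-equivariance of $\nu_n$ together with the fact that $\OmCh_{0,\lambda}$ and $\OmGr_{0,\mu}$ are attracting cells, so that the theorem is pinned down by matching $\Cs$-fixed points. Where your proposal comes up short is precisely the step you yourself flag as the ``main obstacle'': you never actually carry out the fixed-point matching, and the mechanism you gesture at for it (that $\nu_n$ intertwines the two $\Cs$-actions ``with opposite weights'') is not the right explanation.

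In the paper the composite $\nu_n = \eta\circ\beta_n\circ\psi_n$ is $\Cs$-equivariant (not anti-equivariant), and the transpose enters at a specific, purely combinatorial place. The Etingof--Ginzburg map sends the fixed point $\chi_{L(\lambda)}\in\cX_n$ to Wilson's fixed point $\mbf{X}_\lambda\in\CM_n$ with \emph{no} transpose (Theorem~\ref{thm:fixedmatch}), and $\beta_n(\OmCM_\lambda)=\OmGrad_\lambda$ likewise preserves the label. The transpose arises only in passing from the rational Grassmannian to the quasi-exponential Grassmannian: $\eta$ sends $W$ to the annihilator $\ann_{\Quasi}(z^n W)$, and the complementation inside $\{0,\dots,2n-1\}$ that this produces turns a partition into its transpose (Lemma~\ref{lem:schubertmatch}). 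There is no sign flip of $\Cs$-weights at work.

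The computation that you defer — verifying $\psi_n(\chi_{L(\lambda)})=\mbf{X}_\lambda$ — is the genuinely hard, non-obvious part, and the paper resolves it with a device your outline does not anticipate: the degenerate affine Hecke algebra $\dH_n$ embedded in $\H_n$ via $z_i=y_ix_i+\sum_{j<i}s_{ij}$. Its centre $\C[z_1,\dots,z_n]^{\s_n}$ lands in $\ZH_n$ and defines a map $\rho:\cX_n\to\C^n/\s_n$; on the Calogero--Moser side $z_1$ acts on $L^{\s_{n-1}}$ as $YX$, so the corresponding $\rho:\CM_n\to\C^n/\s_n$ is $(X,Y)\mapsto\mathrm{spec}(YX)$. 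Evaluating $\rho$ on both sides — via Jucys--Murphy contents on $L(\lambda)$, and via a row-reduction of $YX$ for $\mbf{X}_\lambda$ (Lemma~\ref{lem:rowreduce}, Proposition~\ref{prop:rhofixed}) — gives $\Res_{\lambda^t}$ in both cases, which determines the partition and hence the fixed point. Your alternative of applying ``the explicit Etingof--Ginzburg formulas'' directly to $L(\lambda)$ would require an explicit basis of $L(\lambda)$ and an explicit description of $x_1,y_1$ on $L(\lambda)^{\s_{n-1}}$; that is considerably harder than the residue-map argument and is not what the paper does. Similarly, the final paragraph about algebraicity is hand-waved: the paper makes this precise in Theorem~\ref{thm:grassembed} by expanding the $\tau$-function of $(X,Y)$ in Schur functions and matching Pl\"ucker coordinates, which gives an explicit isomorphism of coordinate rings; merely invoking ``Wilson's reconstruction'' does not establish that one has a morphism of varieties.
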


The proof of Theorem \ref{thm:main} is given in section \ref{subsection:Schuberintersect}. The essential fact that we shall repeatedly use in the proof of Theorem \ref{thm:main} is that each of the spaces $\cX_n$, $\CM_n$ and $\QE_n$ satisfies a certain factorization property. Namely, there is a map from each of the spaces to $\h^* / \s_n$, 
$$
\xymatrix{
\cX_n \ar[r]^{\sim} \ar[dr]_{\pi} & \CM_n \ar[d]^{\pi^{\mathsf{cm}}} \ar[r]^{\sim} & \QE_n \ar[dl]^{\Supp} \\
 & \h^* / \s_n & 
}
$$ 
such that the fiber of each map over $\bb$ can be factorized as the product of the fibers over $n_i \cdot b_i$, where $i$ runs over $1, \ds, k$, for instance  
$$
\pi^{-1}(\bb) \simeq \pi^{-1}( n_1 \cdot b_1) \times \cdots \times \pi^{-1}(n_k \cdot b_k),
$$
where $\pi^{-1}(n_i \cdot b_i)$ is a closed subvariety of $\cX_{n_i}$. The key step in our work is to show that each of the isomorphisms $\cX_n \rightsim \CM_n$ and $\CM_n \rightsim \QE_n$ is compatible with these factorizations, in the obvious sense. A closely related result \cite{KZChar} appeared whilst this paper was in preparation. The second key fact that we shall repeatedly use is that each of the spaces $\cX_n, \CM_n$ and $\QE_n$ is equipped with a canonical $\Cs$-action such that the isomorphisms between them is $\Cs$-equivariant. 

\subsection{} Dual to the space $\OmCh_{\bb,\blambda}$ is a space $\mOCh_{\ba,\bmu}$, where $\ba = \sum_{i = 1}^l n_i a_i \in \h / \s_n$. It is the support of a dual Verma module, $\nabla (q, \bmu)$, where $q \in \h$ with $\bar{q} = \ba$ and $\bmu = (\mu^{(1)}, \ds, \mu^{(l)})$ is an irreducible $\s_q$-module. We show, Theorem \ref{thm:bigdiagram}, that the image of $\mOCh_{\ba,\bmu}$ under the map $\nu_n$ is the set $\mOGr_{\ba,\bmu}$ of all spaces $C$ of quasi-exponentials in $\QE$ such that the singularities of $C$, counted with multiplicity, are encoded by $\ba$, and $\bmu$ encodes the exponents of $C$ at each singular point. See definition \ref{defn:exponents} for details.

The space $\Hom_{\H_n}(\nabla(q,\bmu),\Delta(p,\blambda))$ is a $\ZH_n$-module, supported on the intersection of $\OmCh_{\bb,\blambda}$ and $\mOCh_{\ba,\bmu}$. We consider the case $p = 0$ so that $\nu_n(\OmCh_{0,\lambda} \cap \mOCh_{\ba,\bmu})$ is contained in $\Grass_n(\C[x]_{2n})$. Set-theoretically, the intersection $\Grass_n(\C[x]_{2n}) \cap \nu_n(\mOCh_{\ba,\bmu})$ is the intersection 
$$
\Omega_{\bmu}(q) = \Omega_{\mu^{(1)}}(q_1) \cap \ds \cap \Omega_{\mu^{(k)}}(q_k)
$$
of a certain collection of Schubert cells in $\Grass_n(\C[x]_{2n})$, where the numbers $q_i$ are specifying complete flags in $\C[x]_{2n}$. Under the assumption\footnote{See assumption \ref{assume:schubert} for details.} that we have an equality of (non-reduced) subschemes
$$
\Grass_n(\C[x]_{2n}) \cap \nu_n(\mOCh_{\ba,\bmu}) = \Omega_{\bmu}(q)
$$
of $\Grass_n(\C[x]_{2n})$, we show in Theorem \ref{thm:intersectassume} that  

\begin{cor}\label{cor:intersect}
We have an isomorphism of zero-dimensional, \textit{Gorenstein} schemes
$$
\nu_n : \OmCh_{0,\lambda} \cap \mOCh_{\ba,\bmu} \rightsim \OmGr_{0,\lambda^t} \cap \Omega_{\bmu}(q)
$$
such that $\Hom_{\H_n}(\nabla(q,\bmu,\mbf{0}),\Delta(0,\blambda,\ba))$ is the coregular ($\simeq$ regular) representation of $\C[\OmCh_{0,\lambda} \cap \mOCh_{\ba,\bmu}]$. 
\end{cor}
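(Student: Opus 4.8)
The plan is to promote Theorems~\ref{thm:main} and~\ref{thm:bigdiagram} to isomorphisms of (possibly non-reduced) schemes, to recognise the resulting zero-dimensional scheme as a complete intersection inside a Grassmannian, and then to compute the $\Hom$-space using that $\H_n$ is an Azumaya algebra over its centre. Since $\nu_n$ is the composite of the Etingof--Ginzburg isomorphism $\cX_n\rightsim\CM_n$ with Wilson's isomorphism $\CM_n\rightsim\QE_n$, it is an isomorphism of varieties; hence the equalities $\nu_n(\OmCh_{0,\lambda})=\OmGr_{0,\lambda^t}$ and $\nu_n(\mOCh_{\ba,\bmu})=\mOGr_{\ba,\bmu}$ of Theorems~\ref{thm:main} and~\ref{thm:bigdiagram} hold as subschemes of $\QE_n$, and, intersection being functorial, this produces an isomorphism of scheme-theoretic intersections
$$
\nu_n\colon\ \OmCh_{0,\lambda}\cap\mOCh_{\ba,\bmu}\ \rightsim\ \OmGr_{0,\lambda^t}\cap\mOGr_{\ba,\bmu}.
$$
Because $\OmGr_{0,\lambda^t}\subseteq\Grass_n(\C[x]_{2n})$, Assumption~\ref{assume:schubert} lets me rewrite the target as $\OmGr_{0,\lambda^t}\cap\Omega_{\bmu}(q)$, which yields the asserted isomorphism of schemes; write $Z$ for this common scheme.

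I would next argue that $Z$ is zero-dimensional and Gorenstein. Zero-dimensionality is the one genuinely geometric point: on the Grassmannian side $Z$ is the intersection of the Schubert cell $\OmGr_{0,\lambda^t}$ with the Schubert cells $\Omega_{\mu^{(i)}}(q_i)$ attached to the pairwise distinct points $q_1,\dots,q_l$, and one must know that these meet in the expected dimension. Granting $\dim Z=0$, the Gorenstein property is then purely formal: inside the smooth variety $\Grass_n(\C[x]_{2n})$ the cells $\OmGr_{0,\lambda^t}$ and $\Omega_{\mu^{(1)}}(q_1),\dots,\Omega_{\mu^{(l)}}(q_l)$ are smooth, hence local complete intersections, and their codimensions sum to $\dim\Grass_n(\C[x]_{2n})$; since $\dim Z=0$, the union of their local defining equations forms a regular sequence, so $Z$ is a zero-dimensional, hence Artinian, complete intersection and therefore Gorenstein. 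In particular $\C[Z]$ is self-injective, so its coregular representation $\Hom_{\C}(\C[Z],\C)$ is isomorphic to its regular representation $\C[Z]$.

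Finally I would compute the $\Hom$-space. The modules $\Delta(0,\blambda,\ba)$ and $\nabla(q,\bmu,\mbf{0})$ are, by construction, finitely generated $\H_n$-modules supported scheme-theoretically on $Z$, and upon restriction to $Z$ they become, respectively, the structure sheaf $\mathcal{O}_Z$ and the dualizing sheaf $\omega_Z=\Hom_{\C}(\C[Z],\C)$ --- here one uses the explicit form of the Verma and dual Verma modules together with the properness of the intersection defining $Z$, which makes the relevant $\Tor$'s vanish. Recall that, by Etingof--Ginzburg, $\H_n$ is an Azumaya algebra over $Z_n=\C[\cX_n]$; over the Artinian ring $\C[Z]$ it therefore splits, so $\H_n\otimes_{Z_n}\C[Z]$ is Morita equivalent to $\C[Z]$, and for $\H_n$-modules supported on $Z$ this equivalence carries $\Hom_{\H_n}$ over to $\Hom_{\C[Z]}$ of the associated $\C[Z]$-modules. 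Hence
$$
\Hom_{\H_n}\!\bigl(\nabla(q,\bmu,\mbf{0}),\Delta(0,\blambda,\ba)\bigr)\ \cong\ \Hom_{\C[Z]}(\mathcal{O}_Z,\omega_Z)\ =\ \omega_Z\ =\ \Hom_{\C}(\C[Z],\C),
$$
the coregular representation of $\C[Z]$, which by the Gorenstein property is the regular representation; combined with the scheme isomorphism $\nu_n$ above, this is exactly Corollary~\ref{cor:intersect}. (If the explicit description instead matches $\nabla$ with $\mathcal{O}_Z$ and $\Delta$ with $\omega_Z$, the same answer results, again since $Z$ is Gorenstein.)

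The principal obstacle is twofold. First, one must establish that $Z$ is zero-dimensional, i.e. that the Schubert cells $\OmGr_{0,\lambda^t}$ and $\Omega_{\mu^{(i)}}(q_i)$, attached to the distinct points, meet in the expected dimension --- a transversality statement for the flags determined by $q_1,\dots,q_l$. Second --- and this is why Assumption~\ref{assume:schubert} is imposed rather than derived --- one must match the non-reduced scheme structure of $\Omega_{\bmu}(q)$ with that of $\Grass_n(\C[x]_{2n})\cap\nu_n(\mOCh_{\ba,\bmu})$. A lesser technical point is checking that the Verma and dual Verma restrict on $Z$ to honestly rank-one, self-dual sheaves, so that the Morita translation yields exactly the coregular/regular pair with no extraneous twist.
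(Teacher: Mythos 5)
Your overall plan lines up with the paper's Theorem~\ref{thm:intersectassume}, and the idea of replacing the paper's citation of [SchubertGLN, Lemma 4.3] by a direct complete-intersection argument for Gorenstein-ness is a legitimate alternative; but there are two genuine problems. First, the zero-dimensionality of $Z$ is not a minor gap you can "grant": it is proved in the paper (in the proof of Lemma~\ref{lem:dimcount1}) by checking that the flags $\mc{F}_{\bullet}(q_i)$ attached to distinct $q_i$ are pairwise transverse, and transverse to $\mc{F}(\infty)$; without this, the complete-intersection and hence Gorenstein argument doesn't start. You should either reproduce this transversality check or cite it explicitly.

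Second, and more seriously, the $\Hom$-computation step is wrong as written. You claim that $\Delta(0,\blambda,\ba)$ and $\nabla(q,\bmu,\mbf{0})$ are "supported scheme-theoretically on $Z$" and that upon restriction to $Z$ they become $\mathcal{O}_Z$ and $\omega_Z$ respectively, so that $\Hom_{\H_n}(\nabla,\Delta)\cong\Hom_{\C[Z]}(\mathcal{O}_Z,\omega_Z)$. Neither assertion is correct. In fact $e\nabla(q,\bmu,\mbf{0})\simeq Z_n/I$ and $e\Delta(0,\blambda,\ba)\simeq Z_n/J$ where $I,J$ are distinct ideals with $\Spec(Z_n/I)=\mOCh_{\ba,\bmu,n\cdot 0}$ and $\Spec(Z_n/J)=\OmCh_{0,\lambda,\ba}$; each of these is strictly larger than $Z=\Spec(Z_n/(I+J))$, and the naive restriction $(-)\otimes_{Z_n}\C[Z]$ applied to either cyclic module gives $\C[Z]$, not $\omega_Z$. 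Moreover $\Hom_{Z_n}(Z_n/I, Z_n/J)$ is a $\C[Z]$-module, but it is not $\Hom_{\C[Z]}$ of the two restricted modules. The correct argument, which the paper imports from [SchubertGLN, Lemma 3.8], is: $\Hom_{Z_n}(Z_n/I,Z_n/J)$ is the annihilator $\ker\overline{I}$ of $\overline{I}=(I+J)/J$ in $Z_n/J$, and because $Z_n/J$ and $Z_n/(I+J)$ are both finite-dimensional and Gorenstein, $\ker\overline{I}\cong(Z_n/(I+J))^{*}$ as $Z_n/(I+J)$-modules. This gives the coregular representation directly, without any sleight of hand about "restricting" $\nabla$ and $\Delta$ to $Z$. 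If you want a self-contained proof rather than a citation, you should state and prove this duality lemma for a pair of ideals with Gorenstein quotients; the "restriction to $Z$" heuristic does not substitute for it.
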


We show, independent of the assumption, that 
$$
\dim \Hom_{\H_n}(\nabla(q,\bmu,\mbf{0}),\Delta(0,\lambda,\ba)) = \dim \C[\OmGr_{0,\lambda^t} \cap \Omega_{\bmu}(q)] = \langle \sigma_{\lambda^t}, \sigma_{\mu^{(1)}} \cdots \sigma_{\mu^{(k)}} \rangle, 
$$
where $\sigma_{\idot}$ is the cohomology class in $H^*(\Grass_n(\C[x]_{2n}))$ defined by the closure of a given cell and $\langle - , - \rangle$ is the usual pairing on $H^*(\Grass_n(\C[x]_{2n}))$. 

\subsection{} The results of this article were motivated by recent work of Mukhin, Tarasov and Varchenko. They showed in \cite{BetheCherednik}, that there is an intriguing relationship between the rational Cherednik algebra and the Bethe algebra associated to the Gaudin integrable system. Many of the results of this paper were inspired by analogous results of Mukhin, Tarasov and Varchenko about the representation theory of the Bethe algebra. 

\subsection{Acknowledgements}

The author would like to express his sincerest thanks to Iain Gordon and Victor Ginzburg for sharing their ideas and for extensive discussions. Thanks also to Catharina Stroppel, Maurizio Martino and Olaf Schn\"urer for stimulating discussions. The author is grateful to the Max-Planck-Institut f\"ur Mathematik, Bonn for its hospitality during the writing of this paper. The author is supported by the EPSRC grant EP-H028153.

\section{Rational Cherednik algebras and the Calogero-Moser space}\label{sec:maincm}

In this section, we recall some of the basic properties of the Wilson's completion of the Calogero-Moser phase space.  

\subsection{The Calogero-Moser space}\label{sec:defncm}

The Calogero-Moser space $\CM_n$ is a completion of the phase space associated to the Calogero-Moser integrable system, which was introduced by Wilson in the seminal paper \cite{Wilson}. It is a smooth affine variety of dimension $2n$ and a symplectic manifold. Denote by $\g$ the space of all $n \times n$ matrices over $\C$ and define $\CMM_n \subset \g \times \g$ to be the set of all pairs $(X,Y)$ such that the rank of $[X,Y] + I_n$ equals one, where $I_n \in \g$ is the identity matrix. The group $\PGL_n$ acts on $\CMM_n$ by simultaneous conjugation, $g \cdot (X,Y) = (\Ad_g(X),\Ad_g(Y))$. It is shown in \cite[Corollary 1.5]{Wilson} that this action is free. 

\begin{defn}
The \tit{Calogero-Moser space} $\CM_n$ is defined to be the categorical ($= $ geometric) quotient $\CMM_n /\!/ \PGL_n$.
\end{defn}

The space $\CM_n$ is an affine symplectic manifold. 
 
\subsection{Rational Cherednik algebras of type $\mathbf{A}$}\label{sec:typeA}

In this section we recall the definition of the rational Cherednik algebra at $t = 0$ associated to the symmetric group $\s_n$ . Let $y_1, \ds, y_n$ be a basis of the $n$-dimensional space $\h$ and $x_1, \ds, x_n$ dual basis of $\h^*$. The symmetric group $\s_n$ acts on $\h$ by permuting the $y_i$'s. The rational Cherednik algebra $\H_n$ is the algebra generated by $\s_n$, $\h$ and $\h^*$, satisfying the defining relations 
$$
\sigma x_i = x_{\sigma^{-1}(i)} \sigma, \quad \sigma y_i = y_{\sigma(i)} \sigma, \quad [x_i,x_j] = [y_i,y_j] = 0, \quad \forall \ 1 \le i \neq j \le n, \ \sigma \in \s_n,
$$
\begin{equation}\label{eq:releq}
[y_i,x_j] = s_{ij}, \quad [y_i,x_i] = -\sum_{k = 1, k \neq i}^n  s_{ik}, \quad \forall \ 1 \le i \neq j \le n.
\end{equation}
The centre of $\H_n$ is denoted $\ZH_n$ and the corresponding affine variety is $\cX_n$. Let $p \in \h^*$ and denote by $\s_p$ the stabilizer of $p$ in $\s_n$. The algebra $\C[\h^*] \rtimes \s_p$ is a subalgebra of $H_n$. Each $\blambda \in \Irr (\s_p)$ can be considered a module over $\C[\h^*] \rtimes \s_p$, where $\C[\h^*]$ acts by evaluation at $p$. Then the Verma module $\Delta(p,\blambda)$ is the induced module $H_n \o_{\C[\h^*] \rtimes \s_p} \blambda$. The annihilator $I$ in $Z_n$ of $\Delta(p,\blambda)$ depends only on the image $\bb$ of $p$ in $\h^* / \s_n$. We denote by $\Omega_{\bb,\blambda}$, the closed subvariety of $X_n$ defined by $I$. It is shown in \cite{End} that $\Omega_{\bb,\blambda} \simeq \mathbb{A}^n$ is a Lagrangian subvariety of $X_n$.  

\subsection{The Etingof-Ginzburg isomorphism}

The algebra $\H_n$ is Azumaya, hence there is, up to isomorphism, a unique simple $\H_n$-module supported at each closed point of $\cX_n$. For each such $L$, denote by $\chi_L$ the corresponding character of $\ZH_n$ so that 
$$
z \cdot l = \chi_L(z) \ l, \quad \forall \ l \in L, \ z \in \ZH_n.
$$
The map $L \mapsto \chi_L$ defines a bijection between $\Irr (\H_n)$ and the closed points of $\cX_n$. Each simple module $L$ is isomorphic to the regular representation as an $\s_n$-module. Therefore, if $\s_{n-1}$ is the subgroup of $\s_n$ fixing $x_1$, then the subspace $L^{\s_{n-1}}$ is $n$-dimensional and $x_1, y_1$ act on this subspace.

\begin{prop}[\cite{EG}, Theorem 11.16]\label{prop:EGiso}
The map $L \mapsto (x_1 |_{L^{\s_{n-1}}}, y_1 |_{L^{\s_{n-1}}})$ defines an isomorphism of affine varieties $\psi_n : \cX_n \rightsim \CM_n$.
\end{prop}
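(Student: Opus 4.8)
This is \cite[Theorem 11.16]{EG}; I outline the plan of proof. The first task is to see that the assignment makes sense on closed points and lands in $\CM_n$. Since $\H_n$ is a finitely generated $\ZH_n$-module that is Azumaya of degree $n!$, each closed point of $\cX_n$ supports a unique simple module $L$, and $L\cong\C\s_n$ as an $\s_n$-module; the relations $\sigma x_1=x_1\sigma$ and $\sigma y_1=y_1\sigma$ for $\sigma\in\s_{n-1}$ show that $x_1,y_1$ preserve the $n$-dimensional space $L^{\s_{n-1}}$, so a choice of basis gives a pair $(X,Y)$ of $n\times n$ matrices, canonical up to $\PGL_n$-conjugation. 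To verify $(X,Y)\in\CMM_n$ I would restrict $[y_1,x_1]=-\sum_{k=2}^n s_{1k}$ to $L^{\s_{n-1}}$ and evaluate the right-hand side via the representation theory of $\s_n$: writing $\sum_{k=2}^n s_{1k}=T-T'$ with $T=\sum_{i<j}s_{ij}$ central in $\C\s_n$ and $T'=\sum_{2\le i<j\le n}s_{ij}$ central in $\C\s_{n-1}$, the element $T'$ acts on $L^{\s_{n-1}}$ (the trivial $\s_{n-1}$-isotypic part of $L$) by the scalar $\binom{n-1}{2}$, while $L^{\s_{n-1}}$ meets the trivial $\s_n$-isotypic line $L^{\s_n}$ of $L$, on which $T$ acts by $\binom{n}{2}$, together with the standard isotypic part, on which $T$ acts by $\binom{n-1}{2}-1$. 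Hence $[X,Y]+I_n=(T-T'+1)|_{L^{\s_{n-1}}}$ equals $n$ on the line $L^{\s_n}$ and $0$ on its complement, so $\rk([X,Y]+I_n)=1$ and $\psi_n\colon\cX_n\to\CM_n$ is well defined on closed points.

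Next I would promote $\psi_n$ to a morphism of varieties using the Azumaya property globally: \'etale-locally on $\cX_n$ one has $\H_n\cong\End_{\mathcal O}(\mathcal L)$ for a locally free sheaf $\mathcal L$ of rank $n!$ that is $\s_n$-equivariantly $\mathcal O\o\C\s_n$, whence $\mathcal L^{\s_{n-1}}$ is locally free of rank $n$, the operators $x_1,y_1$ act $\mathcal O$-linearly on it, and the resulting $\mathcal O$-family of pairs $(X,Y)$ — whose sole ambiguity, the $\PGL_n$-action, is exactly the transition data of $\mathcal L$ — glues to a morphism $\cX_n\to\CMM_n/\!/\PGL_n=\CM_n$ lifting the map on points. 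This construction is manifestly $\Cs$-equivariant.

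It remains to see that $\psi_n$ is an isomorphism, which I would deduce from its being a bijective morphism between smooth varieties. Bijectivity on closed points comes from the inverse Calogero--Moser construction: given $(X,Y)\in\CMM_n$, writing $[X,Y]+I_n=v\,w$ with $v$ a column vector and $w$ a row vector, this data endows a suitable quotient of $\C^n\o\C\s_n$ with an $\H_n$-module structure whose associated pair is $(X,Y)$; any simple subquotient $L$ then has $\psi_n(\chi_L)=[(X,Y)]$, giving surjectivity, and the same construction shows that $\chi_L$ is recovered from the $\PGL_n$-orbit of $(X,Y)$, giving injectivity. Finally, $\cX_n$ and $\CM_n$ are both smooth — hence normal — irreducible affine varieties of dimension $2n$ (smoothness of $\CM_n$ is Wilson's; smoothness of $\cX_n=\Spec\ZH_n$ may be imported from \cite{EG}), so a bijective morphism between them is an isomorphism by Zariski's main theorem. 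An alternative to the whole argument is to identify $\ZH_n$ with the spherical subalgebra $e\H_n e$ and compute the latter by Hamiltonian reduction, as is in fact done in \cite{EG}.

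The step I expect to be the real obstacle is constructing the inverse — manufacturing an $\H_n$-module out of a pair $(X,Y)$ with $\rk([X,Y]+I_n)=1$ — together with the non-formal input that $\cX_n$ is smooth; the first two steps are essentially bookkeeping with the relations \eqref{eq:releq} and with $\s_n$-representation theory.
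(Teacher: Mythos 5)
The paper gives no argument of its own here; the statement is a direct citation of \cite[Theorem~11.16]{EG}, so the comparison is with the proof there, and your sketch takes a genuinely different route. EG first construct the deformed Harish--Chandra homomorphism $\Phi : eH_ne \to \C[\CM_n]$, prove it is a filtered algebra isomorphism by passing to the associated graded (where it becomes classical Hamiltonian reduction), and then invoke the Satake isomorphism $\ZH_n \cong eH_ne$; the pointwise recipe $L\mapsto (x_1|_{L^{\s_{n-1}}},y_1|_{L^{\s_{n-1}}})$ is read off afterwards. You propose instead a set-theoretic bijection upgraded to an isomorphism of varieties via Zariski's main theorem. Your rank-one verification is correct and a worthwhile consistency check: $T-T'+1$ acts on $L^{\s_{n-1}}$ by $\binom{n}{2}-\binom{n-1}{2}+1=n$ on the trivial $\s_n$-isotypic line and by $\bigl(\binom{n-1}{2}-1\bigr)-\binom{n-1}{2}+1=0$ on the $(n-1)$-dimensional piece sitting in the standard isotypic component.

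The genuine gap is a circularity you flag but do not resolve. Your plan needs (a) $H_n$ Azumaya over \emph{all} of $\cX_n$ --- otherwise $\psi_n$ is defined only on the Azumaya locus, and at the missing points the simples need not be $n!$-dimensional, so $L^{\s_{n-1}}$ need not be $n$-dimensional --- and (b) $\cX_n$ normal, to run Zariski's main theorem. But for symplectic reflection algebras at $t=0$ the Azumaya locus coincides with the smooth locus of $\cX_n$, and in type $\mbf{A}$ the smoothness of $\cX_n$ is obtained in \cite{EG} \emph{as a consequence of} $\cX_n\cong\CM_n$ (since $\CM_n$ is smooth by Wilson's work), not independently of it. So ``importing smoothness from \cite{EG}'' imports the theorem you are proving; a non-circular version of your strategy would require an independent proof that every simple $H_n$-module has dimension $n!$. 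The inverse construction you invoke (building an $H_n$-module of dimension $n!$ out of $(X,Y)$ with $[X,Y]+I_n=vw$) is really the substance of the result and is only gestured at --- in effect it repackages the very Hamiltonian-reduction input you defer to in your closing sentence. The EG route is cleanest precisely because it produces the algebra isomorphism $eH_ne\cong\C[\CM_n]$ directly, after which smoothness, the Azumaya property, and the pointwise description all emerge as outputs rather than going in as hypotheses.
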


\section{Factorization}

\subsection{Factorization of the Calogero-Moser space}\label{sec:CMFactor}

Let $\h$ be the subalgebra of diagonal matrices in $\g$. By Chevalley's isomorphism, we identify $\g /\!/ GL_n = \h / \s_n$. Let $\varpi : \CM_n \rightarrow \h / \s_n$ be the map that sends the pair $(X,Y)$ onto the $GL_n$-orbit of $X$. Similarly, let $\pi : \CM_n \rightarrow \h^* / \s_n$ be the map that sends $(X,Y)$ to the $\GL_n$-orbit of $Y$.

The subalgebras $\C[\h]^{\s_n}$ and $\C[\h^*]^{\s_n}$ of $H_n$ are contained in $Z_n$. The inclusions  $\C[\h]^{\s_n} \hookrightarrow Z_n$ and $\C[\h^*]^{\s_n} \hookrightarrow Z_n$ define surjective morphisms $\varpi : X_n \rightarrow \h / \s_n$ and $\pi : X_n \rightarrow \h^* / \s_n$. It follows from the proof of \cite[Theorem 10.21]{EtingofCalogeroMoser} that the following diagram commutes
\begin{equation}\label{eq:diagramt}
\xymatrix{
\cX_n \ar[rr]^{\psi_n} \ar[dr]_{\varpi \times \pi} & & \CM_n \ar[dl]^{\varpi \times \pi}\\
 & \h / \s_n \times \h^* / \s_n & 
}
\end{equation}
For $\bb \in \h^* / \s_n$, the fiber $\pi^{-1}(\bb)$ is denoted $\CM (\bb)$. Write $\bb = \sum_{i = 1}^k n_i b_i$ with $b_i \in \C$ pairwise distinct and $\sum_i n_i = n$. If $(X,Y) \in \CM (\bb)$, then we can decompose $Y = \bigoplus_{i = 1}^k Y_i$, with $Y_i$ an $n_i \times n_i$ matrix with only one eigenvalue $b_i$. We get a corresponding decomposition of $X= \bigoplus_{i = 1,j}^k X_{i,j}$. It is shown in the proof of \cite[Lemma 6.3]{Wilson} that each $(X_{i,i},Y_{i,i})$ uniquely defines a point in $\CM (n_i \cdot b_i)$. Thus, we have a map 
\begin{equation}\label{lem:W71}
\alpha_{\bb} : \CM(\bb) \ra \prod_{i = 1}^k \CM(n_i \cdot b_i).
\end{equation}
By Lemma 7.1 of \loccit, the map $\alpha_{\bb}$ is an isomorphism of affine varieties.

\subsection{Factorization of rational Cherednik algebras}

As was shown in section 5 of \cite{End}, one can use completions of the rational Cherednik algebra to prove a factorization result for the generalized Calogero-Moser space $\cX_n$. In this section we show that this factorization is compatible with isomorphism $\alpha_{\bb}$ of (\ref{lem:W71}).    

Fix $p \in \h^*$ and denote its image in $\h^* / \s_n$ by $\bb = \sum_{i = 1}^k n_i \cdot b_i$. We may assume, without loss of generality, that $p = (b_1, \ds, b_1,b_2, \ds, b_2, b_3, \ds )$. The stabilizer of $p$ with respect to $\s_n$ is $\s_p := \s_{n_1} \times \cdots \times \s_{n_k}$. The rational Cherednik algebra $\H_p$ associated to the group $\s_p$ is isomorphic to a tensor product 
\beq{eq:Hptensor}
\H_p \simeq \H_{n_1} \o \cdots \o \H_{n_k},
\eeq
and hence 
\beq{eq:HptensorZ}
Z(\H_p) \simeq \ZH_{n_1} \o \cdots \o \ZH_{n_k}.
\eeq
Therefore, Corollary 5.4 of \cite{End} implies that there is an isomorphism of affine varieties
$$
\phi : \pi^{-1}(\bb) \rightsim \prod_{i = 1}^k \pi^{-1}(n_i \cdot b_i),
$$
where $\pi^{-1}(n_i \cdot b_i)$ is a closed subvariety in $\Spec (\ZH_{n_i})$. Recall the factorization of Wilson's Calogero-Moser space as described in Lemma \ref{lem:W71}. 

\begin{thm}\label{thm:factorsagree}
The diagram
$$
\xymatrix{
\pi^{-1}(\bb) \ar[rr]^{\phi} \ar[d]_{\psi_n} & & \prod_{i = 1}^k \pi^{-1}(n_i \cdot b_i) \ar[d]^{\times_i \psi_{n_i}} \\
\CM(\bb) \ar[rr]_{\alpha_p} & & \prod_{i = 1}^k \CM(n_i \cdot b_i)
}
$$
is commutative.
\end{thm}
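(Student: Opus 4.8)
The plan is to compare the two composites $(\times_i\psi_{n_i})\circ\phi$ and $\alpha_p\circ\psi_n$ by tracking what happens to a single simple module of $\H_p$ under each route, using the Azumaya property to reduce everything to closed points. First I would recall how $\phi$ is constructed in Corollary 5.4 of \cite{End}: the tensor decompositions \eqref{eq:Hptensor} and \eqref{eq:HptensorZ} identify a point of $\pi^{-1}(\bb)$, viewed via the completed algebra $\widehat{H}_n$ at $p$ (which is Morita equivalent to a matrix algebra over the completion of $\H_p$), with a tuple $(L_1,\dots,L_k)$ of simple $\H_{n_i}$-modules, i.e. a point of $\prod_i\pi^{-1}(n_i\cdot b_i)\subset\prod_i\Spec Z_{n_i}$. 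Concretely, the simple $\H_n$-module $L$ supported at a point of $\pi^{-1}(\bb)$ decomposes, under the idempotent $\mbf{e}_{\s_p}$ and the completion, so that $\mbf{e}_{\s_p}L$ becomes an $\widehat{H}_p$-module which under \eqref{eq:Hptensor} is the external tensor product $L_1\boxtimes\cdots\boxtimes L_k$.

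Next I would unwind $\psi_n$ on such an $L$. By Proposition \ref{prop:EGiso}, $\psi_n(L)=(x_1|_{L^{\s_{n-1}}},\,y_1|_{L^{\s_{n-1}}})$, and since $L\cong\C\s_n$ as an $\s_n$-module, the space $L^{\s_{n-1}}$ carries a natural action of the "first" generators. The key computation is to show that, when $L$ corresponds to $(L_1,\dots,L_k)$ as above, the pair $(X,Y)=\psi_n(L)$ decomposes as $Y=\bigoplus_i Y_i$ with $Y_i$ having the single eigenvalue $b_i$ and $X=\bigoplus_{i,j}X_{i,j}$, in exactly the block form used to define $\alpha_{\bb}$ in \eqref{lem:W71}; and moreover that the diagonal block $(X_{i,i},Y_{i,i})$ equals $\psi_{n_i}(L_i)$. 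Here one uses that $y_1$ lies in the $\h$-part of $\H_n$ and that the central character of $\C[\h^*]^{\s_n}$ on $L$ is $\bb$, forcing the eigenvalues of $Y$; the block structure of $X$ then follows from the $\s_p$-isotypic decomposition $L^{\s_{n-1}}=\bigoplus_i (L^{\s_{n-1}})_i$ induced by \eqref{eq:Hptensor}, where $(L^{\s_{n-1}})_i\cong L_i^{\s_{n_i-1}}$ after accounting for the embedding $\s_{n-1}\hookrightarrow\s_p\cdot\s_{n-1}$. Identifying $L_i^{\s_{n_i-1}}$ with the $n_i$-dimensional space on which $x_1,y_1$ act in the $i$-th factor is precisely what makes $(X_{i,i},Y_{i,i})=\psi_{n_i}(L_i)$.

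Putting these together: the bottom-then-right route sends $L\mapsto(X,Y)\mapsto$ (its diagonal blocks) $=((X_{i,i},Y_{i,i}))_i$, which by the previous paragraph is $(\psi_{n_i}(L_i))_i$; the top-then-right route sends $L\mapsto(L_i)_i\mapsto(\psi_{n_i}(L_i))_i$. Since both $\pi^{-1}(\bb)$ and $\prod_i\pi^{-1}(n_i\cdot b_i)$ are reduced (they are closed subvarieties of smooth, hence reduced, varieties, or one may argue with the Azumaya locus) and the maps agree on closed points, the diagram commutes. I expect the main obstacle to be the bookkeeping in the previous paragraph: verifying that the particular choice of embedding $\s_{n-1}\hookrightarrow\s_p$ and the identification \eqref{eq:Hptensor} really do match the "take the $(i,i)$ diagonal block" operation of Wilson used in \eqref{lem:W71}, rather than some permuted or conjugated version of it — this requires being careful that the EG construction uses the \emph{same} distinguished copy of $\s_{n-1}$ (fixing the index $1$) compatibly across all the $\H_{n_i}$, and that the proof of \cite[Lemma 6.3]{Wilson} singles out the blocks in the matching order. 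A secondary point worth a sentence is $\Cs$-equivariance and compatibility with $\varpi$ as well as $\pi$, via diagram \eqref{eq:diagramt}, which rigidifies the identifications and rules out an ambiguity by a scalar.
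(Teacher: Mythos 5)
Your plan is essentially the paper's: reduce to closed points, describe $\phi$ as $\chi_L\mapsto\chi_{e_1L}$ via the completion idempotent, and prove that the $(i,i)$-blocks of the Etingof--Ginzburg pair for $L$ are the Etingof--Ginzburg pairs of the factors $L_i$. Two small notes: the decomposition $L^{\s_{n-1}}=\bigoplus_i L_{b_i}^{\s_{n-1}}$ is not an $\s_p$-isotypic decomposition (indeed $\s_p$ does not stabilize $L^{\s_{n-1}}$) but comes from the generalized $y_1$-eigenspaces of $L$; and the ``bookkeeping'' you correctly flag as the main obstacle is precisely where the paper's proof does its work, introducing a conjugating permutation $u_i$ that carries $W_i=\s_{n_1}\times\cdots\times\s_{n_i-1}\times\cdots\times\s_{n_k}$ into the distinguished $\s_{n-1}$, an averaging isomorphism $\rho\colon L_{u_i(p)}^{\widetilde W_i}\stackrel{\sim}{\to} L_{b_i}^{\s_{n-1}}$, and an explicit computation with orthogonal idempotents showing that $\rho$ intertwines $\hat x_1,\hat y_1$ with $X_{i,i},Y_{i,i}$.
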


Before we can give the proof of Theorem \ref{thm:factorsagree}, we need to describe the isomorphism $\phi$ in representation theoretic terms. Firstly, since the diagram of the theorem involves isomorphisms between affine varieties it suffices to show commutativity on the level of closed points. The proof relies on results from section 5 of \cite{End}, and we will use freely the notation of \loccit  that we have the idempotent $e_1 \in \wh{\C[\h^*]}_{\bp}$, and if $L$ is a simple $\H_n$-module whose support is contained in $\pi^{-1}(\bb)$ then Proposition 5.3 of \loccit says that $e_1 L$ is an irreducible $\H_p$-module. Therefore, $\phi$ can be described as the map that takes the character $\chi_L$ of $\ZH_n$ to the character $\chi_{e_1 L}$ of $\ZH_{n_1} \times \cdots \times \ZH_{n_k}$, for each simple $\H_n$-module $L$ whose support is contained in $\pi^{-1}(\bb)$.

\begin{proof}
To avoid any ambiguity, the generators of $\H_p$ will be denote $\hat{x}_1, \ds, \hat{x}_n, \hat{y}_1, \ds, \hat{y}_n$, as oppose to the generators of $\H_n$, which are denoted $x_1, \ds, x_n$ and $y_1, \ds, y_n$.  

Let $N$ be a simple $\H_p$-module. Via the isomorphism (\ref{eq:Hptensor}), we write $N = N_1 \o \cdots \o N_k$, where $N_i$ is a simple $\H_{n_i}$-module. Define 
$$
m(i) = 1+ \sum_{r < i} n_r, \quad \forall \ 1 \le i \le k.
$$
Then, the isomorphism $\Spec (Z(\H_p)) \simeq \CM_{n_1} \times \cdots \times \CM_{n_k}$ that is induced from the factorization in (\ref{eq:HptensorZ}) is given on the level of closed points by the map 
$$
N \mapsto [(X_1,Y_1), \ds, (X_k,Y_k)],
$$
where $X_i$ denotes the action of $\hat{x}_{m(i)}$ on $N_{i}^{\s_{n_i - 1}}$ and $Y_i$ denotes the action of $\hat{y}_{m(i)}$ on $N_{i}^{\s_{n_i - 1}}$. Fix 
$$
W_i = \s_{n_1} \times \cdots \times \s_{n_i - 1} \times \cdots \times \s_{n_k}
$$
so that, since $N$ is the regular representation as an $\s_p$-module, one can identify $N_{i}^{\s_{n_i - 1}} = N^{W_i}$. 

Now let $L$ be a simple $\H_n$-module such that $\mf{m}_{\bb} \cdot L = 0$. Then, as explained above, the morphism $\phi$ can be described as taking $\chi_L$ to $\chi_{e_1 L}$. Therefore, to prove the commutativity of the diagram, we must show that if $(X,Y)$ represent the action of $x_1$ and $y_1$ on $L^{\s_{n-1}}$ with respect to some basis of that space then $(X_{i,i}, Y_{i,i})$ represent the action of $\hat{x}_{m(i)}, \hat{y}_{m(i)} \in \H_p$ on $(e_1 L)^{W_i}$ with respect to some basis of that space.

Let $\bb = \{ p = p_1, \ds, p_l \}$ be the orbit of $p$ under $\s_n$. Since $\mf{m}_{\bb} \cdot L = 0$, we can decompose $L$ with respect to the action of $\C[\h]$ as 
$$
L = \bigoplus_{i = 1}^l L_{p_i}.
$$
Then, the functor $e_1$ sends $L$ to $e_{1,1} L = L_{p_1}$ such that $x_i \cdot e_1 l = \hat{x}_i \cdot e_1 l$ for all $l \in L$. We can also decompose $L$ with respect to the generalized eigenspaces of the action of $y_1$:
\beq{decompLp}
L = \bigoplus_{i = 1}^k L_{b_i},
\eeq
so that $Y_i : L_{b_i}^{\s_{n-1}} \ra L_{b_i}^{\s_{n-1}}$ and $X_{i,j} : L_{b_j}^{\s_{n-1}} \ra L_{b_i}^{\s_{n-1}}$. Let us fix an $i$. Let $u_i$ denote the permutation in $\s_n$ that moves the block $[m(i), \ds , m(i+1)-1]$ to $[1,\ds, n_i]$ and moves all the entries of $[1,\ds, n]$ below $m(i)$ up by $n_i$. Then, conjugation by $u_i$ sends $W_i$ into 
$$
\widetilde{W}_i = \s_{n_i - 1} \times \s_{n_1} \times \cdots \times \s_{n_k}. 
$$
We have $\widetilde{W}_i = \s_{n-1} \cap \mathrm{Stab}_{\s_n}(u_i(p))$. 
Now 
\beq{decompLi}
L_{b_i} = \bigoplus_{p_j \in I_i} L_{p_j},
\eeq
where $I_i = \{ p_j | (p_{j})_1 = b_i \}$. We have $u_i(p) \in I_i$ and $\s_{n-1}$ acts transitively on this set. This implies that $L_{u_i(p)} \subset L_{b_i}$ such that multiplication defines an isomorphism 
\beq{eq:induip}
\Ind_{\widetilde{W}_i}^{\s_{n-1}} L_{u_i(p)} \stackrel{\sim}{\longrightarrow} L_{b_i}.
\eeq
Hence, we have an explicit isomorphism 
$$
\rho : L_{u_i(p)}^{\widetilde{W}_i} \stackrel{\sim}{\longrightarrow} L_{b_i}^{\s_{n-1}}, \quad \rho(l) = \frac{1}{(n-1)!} \sum_{\sigma \in \s_{n-1}} \sigma(l).
$$
Recall that we want to compare the action of $X_{i,i}$ and $Y_{i,i}$ on $L_{b_i}^{\s_{n-1}}$ with the action of $\hat{x}_{m(i)}, \hat{y}_{m(i)} \in \H_p$ on $(e_1 L)^{W_i} = L_{p}^{W_i}$. Since $u_i(x_{m(i)}) = x_1$ and $u_i(y_{m(i)}) = y_1$, it suffices to consider the action of $\hat{x}_1,\hat{y}_1 \in \H_{p}$ on $u_i(L_{p}^{W_i}) = L_{u(p)}^{\widetilde{W}_i}$. Thus, the theorem will follow from the following claim.  

\begin{claim}
For all $l \in L_{u_i(p)}^{\widetilde{W}_i}$, $\rho(\hat{x}_1 l) = X_{i,i} \rho(l)$ and $\rho(\hat{y}_1 l) = Y_{i,i} \rho(l)$.
\end{claim}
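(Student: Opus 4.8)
The plan is to reduce the Claim to an explicit computation with the elements $x_1, y_1 \in \H_n$, the idempotent $e_1$, and the symmetrizing operator $\rho$, which is nothing but the restriction to $L_{u_i(p)}^{\widetilde W_i}$ of the element $e_- := \tfrac{1}{(n-1)!}\sum_{\sigma \in \s_{n-1}}\sigma$ of $\C\s_n$. The ingredients are: the identity $x_j \cdot e_1 l = \hat x_j \cdot e_1 l$ recalled before the proof, together with the corresponding description of the action of $\hat y_{m(i)}$ on $e_1 L$ furnished by the completed isomorphism $e_1\,\widehat{\H_n}_{\bp}\,e_1 \cong \widehat{\H_p}$ of \cite[\S 5]{End}; the fact that $x_1$ and $y_1$ commute in $\H_n$ with the Young subgroup $\s_{n-1} = \mathrm{Stab}_{\s_n}(1)$, and hence with $e_-$; the refinement $L = \bigoplus_j L_{p_j}$ of $L = \bigoplus_i L_{b_i}$ of \eqref{decompLp}--\eqref{decompLi}; and the transitivity of $\s_{n-1}$ on $I_i$ with stabilizer $\widetilde W_i$ underlying \eqref{eq:induip}. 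Throughout, note that since $\sigma \in \s_{n-1}$ fixes the index $1$ it commutes with $y_1$, so $\s_{n-1}$ preserves each $y_1$-eigenspace $L_{b_i}$, and hence $e_-$ does too.

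I would treat the statement for $\hat x_1$ first. As $l \in L_{u_i(p)}^{\widetilde W_i} = (e_1 L)^{\widetilde W_i}$ we have $e_1 l = l$, so $\hat x_1 l = x_1 l$; moreover $\hat x_1 l$ lies in the $\H_p$-module $e_1 L = L_{u_i(p)} \subseteq L_{b_i}$ and is again $\widetilde W_i$-invariant, since $\hat x_1$ commutes with $\widetilde W_i$. Using that $x_1$ commutes with $e_-$,
$$
\rho(\hat x_1 l) = e_-(x_1 l) = x_1(e_- l) = x_1\,\rho(l).
$$
The left-hand side lies in $L_{b_i}$ because $e_-$ preserves $L_{b_i}$; hence $x_1\rho(l) \in L_{b_i}$, so it coincides with its own $L_{b_i}$-component, which is by definition $X_{i,i}\rho(l)$. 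Thus $\rho(\hat x_1 l) = X_{i,i}\rho(l)$.

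The statement for $\hat y_1$ is the substantive one, and is where I expect the main difficulty. The issue is that $y_1 \notin \C[\h^*]$, so $y_1$ need not preserve the eigenblock $L_{u_i(p)}$, and the action of the generator $\hat y_{m(i)} \in \H_p$ on $e_1 L$ is a priori given by $y_1$ only up to the correction built into the isomorphism of \cite[\S 5]{End}. The plan is: (i) show that in fact $y_1$ does map $L_{u_i(p)}$ into itself. By \eqref{eq:releq} and a standard argument $y_1$ sends the generalized $\C[\h^*]$-eigenspace $L_{u_i(p)}$ into $L_{u_i(p)} + \sum_{k \neq 1} L_{s_{1k}(u_i(p))}$; but the reflection $s_{1k}$ fixes $u_i(p)$ when $k$ lies in the first $\s_p$-block, and carries $u_i(p)$ out of the $y_1$-eigenspace $L_{b_i}$ otherwise, so, as $y_1$ preserves $L_{b_i}$, the off-diagonal contributions all vanish and $y_1 l \in L_{u_i(p)}$. (ii) Deduce, from the description of the isomorphism in \cite[\S 5]{End}, that under this circumstance $\hat y_{m(i)}$ acts on $L_{u_i(p)}^{\widetilde W_i}$ as $y_1$ — equivalently, that the correction term is trivial on this subspace. (iii) Now argue exactly as in the previous paragraph: since $y_1$ preserves $L_{b_i}$, the operator $Y_{i,i}$ is simply $y_1|_{L_{b_i}^{\s_{n-1}}}$, so
$$
\rho(\hat y_1 l) = e_-(y_1 l) = y_1(e_- l) = y_1\,\rho(l) = Y_{i,i}\,\rho(l).
$$

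The one genuinely delicate point is step (ii): pinning down the action of $\hat y_{m(i)}$ on $e_1 L$ in representation-theoretic terms, i.e.\ controlling the transformation of the Dunkl operator under the idempotent $e_1$ and checking that it contributes nothing on $L_{u_i(p)}^{\widetilde W_i}$. I would isolate this as a short lemma, proved by unwinding the explicit formulas of \cite[\S 5]{End} and invoking step (i). Granting the Claim, the diagram of Theorem \ref{thm:factorsagree} commutes on closed points, and hence commutes, since all its maps are morphisms of affine varieties.
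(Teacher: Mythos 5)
Your computation for $\hat x_1$ is correct and is essentially the paper's argument, stated more cleanly: you use the recalled identity $\hat x_1 l = x_1 l$ on $e_1 L$, the fact that $\sigma\in\s_{n-1}$ commutes with $x_1$, and the observation that $\rho(\hat x_1 l)$ already lands in $L_{b_i}^{\s_{n-1}}$ (since $\hat x_1 l\in L_{u_i(p)}\subset L_{b_i}$ and $\s_{n-1}$ preserves $L_{b_i}$), which makes the explicit insertion of a projection idempotent $\mathrm{pr}_i$ unnecessary. That streamlining is a genuine improvement over the paper's more laboured idempotent manipulations.

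However, your diagnosis of the $\hat y_1$ case as ``the substantive one'' rests on a reversal of the paper's conventions, and this leads you into an unnecessary detour. In the paper's setup the Verma module $\Delta(p,\blambda)$ is induced from $\C[\h^*]\rtimes\s_p$, with $p\in\h^*$; so the ring $\C[\h^*]=\mathrm{Sym}(\h)$ acting with generalized eigenvalue $p$ is generated by the $y_i$'s (which are the basis of $\h$), not the $x_i$'s. Thus $y_1\in\C[\h^*]$, it commutes with $e_1\in\widehat{\C[\h^*]}_{\bp}$, and it preserves every generalized $\C[\h^*]$-eigenspace $L_{p_j}$ simply because the $y_j$'s commute with one another. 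Your step (i) arrives at this conclusion by a roundabout route (via \eqref{eq:releq} and cancellation of ``off-diagonal contributions''), and your step (ii) --- controlling a putative Dunkl-type correction in $\hat y_{m(i)}$ --- is addressing a problem that does not arise: the compatibility of the $\C[\h^*]$-structures under the completed isomorphism of \cite[\S 5]{End} gives $\hat y_1 l = y_1 l$ for free. The paper is therefore right that the $\hat y_1$ case is \emph{identical} to (indeed, easier than) the $\hat x_1$ case; the element whose interaction with $e_1$ genuinely requires the quoted identity from \cite[\S 5]{End} is $x_1$, which does \emph{not} lie in $\C[\h^*]$ and does not a priori preserve the eigenspaces. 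Once you drop (i) and (ii), your step (iii) is exactly the paper's argument, and the Claim is proved.
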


\begin{proof}
The action of $X_{i,i}$ and $Y_{i,i}$ on $L_{b_i}^{\s_{n-1}}$ is given by
$$
X_{i,i} : L_{b_i}^{\s_{n-1}} \stackrel{x_1 \cdot}{\longrightarrow} L \stackrel{\mathrm{pr}_i}{\longrightarrow} L_{b_i}^{\s_{n-1}}, \quad Y_{i,i} : L_{b_i}^{\s_{n-1}} \stackrel{y_1 \cdot}{\longrightarrow} L \stackrel{\mathrm{pr}_i}{\longrightarrow} L_{b_i}^{\s_{n-1}},
$$
where $\mathrm{pr}_i$ is projection onto $L_{b_i}^{\s_{n-1}}$. Since multiplication by $u_i(e_1)$ is projection onto $L_{u_i(p)}$, equation (\ref{eq:induip}) implies that $\mathrm{pr}_i$ can be expressed as multiplication by $\frac{1}{(n-1)!} \sum_{\sigma \in \s_{n-1}} \sigma(u_i(e_1))$. Therefore,
\begin{align}
\rho(\hat{x}_1 l) & = \rho(x_1 \cdot u_i(e_1) l) = \frac{1}{(n-1)!} \sum_{\sigma \in \s_{n-1}} \sigma( x_1 \cdot u_i(e_1) l) \\
 & = x_1 \frac{1}{(n-1)!} \sum_{\sigma \in \s_{n-1}} \sigma(u_i(e_1) l) \label{eq:align2}
\end{align}
A direct calculation, using the fact that the set $\{ \sigma(u_i(e_1)) \ | \ \sigma \in \s_{n-1} \}$ consists of orthogonal idempotents, shows that 
$$
\frac{1}{(n-1)!} \sum_{\sigma \in \s_{n-1}} \sigma(u_i(e_1) l) = \left( \frac{1}{(n-1)!} \sum_{\sigma \in \s_{n-1}} \sigma(u_i(e_1)) \right) \frac{1}{(n-1)!} \sum_{\sigma \in \s_{n-1}} \sigma(u_i(e_1) l). 
$$
Therefore, we have 
\begin{align*}
(\ref{eq:align2}) & = x_1 \cdot \left( \frac{1}{(n-1)!} \sum_{\sigma \in \s_{n-1}} \sigma(u_i(e_1)) \right) \frac{1}{(n-1)!} \sum_{\sigma \in \s_{n-1}} \sigma(u_i(e_1) l) \\
  & = \left( \frac{1}{(n-1)!} \sum_{\sigma \in \s_{n-1}} \sigma(u_i(e_1)) \right) x_1 \cdot \left( \frac{1}{(n-1)!} \sum_{\sigma \in \s_{n-1}} \sigma(u_i(e_1) l) \right) \\
 & = \mathrm{pr}_i (x_1 \rho(l)) = X_{i,i} \cdot \rho(l)
\end{align*}
as required. The proof of $\rho(\hat{y}_1 l) = Y_{i,i} \rho(l)$ is identical. 
\end{proof}
The statement of the theorem follows from the above claim. 
\end{proof}

\section{Torus fixed points}

If we define $\deg(x_i) = 1$, $\deg(y_i) = -1$ and $\deg (\sigma) = 0$ for $\sigma \in \s_n$, then the defining relations (\ref{eq:releq}) imply that $\H_n$ is a $\Z$-graded algebra. The centre of $\H_n$ inherits a grading. Thus, there is a natural action of $\Cs$ on $\cX_n$. Similarly, we can define an action of the torus $\Cs$-action on $\CM_n$, by setting $\alpha \cdot (X,Y) = (\alpha^{-1}, X, \alpha Y)$. The isomorphism $\psi_n$ of Proposition \ref{prop:EGiso} is $\Cs$-equivariant. Moreover, it is known that there are only finitely many fixed points in $\cX_n$ and $\CM_n$ under this action. Therefore $\psi_n$  defines a bijection between these fixed points. The purpose of this section is to calculate this bijection. The first step is to explicitly label the fixed points in $\cX_n$ and $\CM_n$ respectively. 

\subsection{$\Cs$-fixed points in $\CM_n$}\label{sec:CMfxed}

The fixed points of this $\Cs$-action were classified in \cite[\S 6]{Wilson} and explicit representatives $(X,Y;v,w)$ of each fixed point given in Lemma 6.9 of \loccit The fixed points are labeled by the partitions of $n$. For each $\lambda \vdash n$, we describe a point $\mbf{X}_\lambda \in \CM_n$. First, one rewrites $\lambda = (\lambda_1, \ds, \lambda_k)$ in Frobenius form. This means that $\lambda$ is written a union of hook partitions $(n - r + 1, 1^{r-1})$ of decreasing size such, when stack one above the other, the largest at the bottom and smallest at the top, we recover the Young diagram of $\lambda$. Combinatorially, $\lambda$ is written as an $l$-tuple of pairs $(n_1,r_1), \ds, (n_l,r_l)$ subject to the restrictions $r_i > r_j$ and $n_i - r_i > n_j - r_j$ if $i < j$. Here $\sum_i n_i = n$ and $1 \le r_i \le n_i$ for all $i$. 

Given such a pair, we have 
$$
\mbf{X}_\lambda = (X,Y) = (\oplus_{i,j} X_{i,j}, \oplus_i Y_i)_{i,j = 1 \ds l}
$$
where $Y_i$ is the upper-triangular Jordan block of size $n_i \times n_i$ with eigenvalues $0$. The matrix $X_{i,i}$ has all diagonals zero except the $-1$ diagonal (i.e. just below the main diagonal) where the entries from top left to bottom right reads
\beq{eq:list}
1,2, \ds, r_i - 1; -(n_i - r_i), \ds, -2,-1.
\eeq
For $i \neq j$, $X_{i,j}$ is a $n_i \times n_j$ matrix with non-zero entries only on the $r_j - r_i - 1$ diagonal. If $i > j$ then the non-zero diagonal of $X_{i,j}$ has $r_i$ entries equal to $n_i$ followed by $n_i - r_i$ entries equal to zero. If $i < j$, the non-zero diagonal of $X_{i,j}$ has $r_j - 1$ entries equal to $0$ followed by $n_j - r_j +1$ entries equal to $-n_i$.

\subsection{The fixed points in $\cX_n$}

Since $H_n$ is an Azumaya algebra, the closed points of $X_n$ are in bijection with isomorphism classes of simple $H_n$-modules. This implies that the $\Cs$-fixed points in $X_n$ are naturally labeled by the isomorphism classes of simple, graded $H_n$-modules. It is know \cite{Baby} that the Verma modules $\Delta(0,\lambda)$, for $\lambda$ a partition of $n$, are graded and have a unique simple graded quotient $L(\lambda)$. Moreover, up to shifts in grading, these (pairwise non-isomorphic) simple modules are all possible simple, graded $H_n$-modules. Therefore, the fixed points in $X_n$ are $\chi_{\lambda}$, where $\chi_{\lambda}$ is the character of $\ZH_n$ defined by the simple $H_n$-module $L(\lambda)$.

\begin{thm}\label{thm:fixedmatch}
The isomorphism $\psi_n : \cX_n \rightsim \CM_n$ sends the $\Cs$-fixed point $\chi_{L(\lambda)} \in \cX_n$ to the $\Cs$-fixed point $\mbf{X}_\lambda$ in $\CM_n$.
\end{thm}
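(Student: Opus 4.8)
The plan is to compute both sides of the claimed bijection explicitly and match them, using the $\Cs$-equivariance of $\psi_n$ together with the factorization result of Theorem \ref{thm:factorsagree} to reduce to the hook case. First I would note that since $\psi_n$ is $\Cs$-equivariant and both source and target have only finitely many fixed points, the bijection is determined by identifying, for each $\lambda \vdash n$, the image $\psi_n(\chi_{L(\lambda)})$ as a point of $\CM_n$; the claim is that this image is the representative $\mbf{X}_\lambda$ described in section \ref{sec:CMfxed}. By Proposition \ref{prop:EGiso}, $\psi_n(\chi_{L(\lambda)})$ is the pair $(x_1|_{L(\lambda)^{\s_{n-1}}}, y_1|_{L(\lambda)^{\s_{n-1}}})$, so everything reduces to understanding the action of $x_1$ and $y_1$ on the $\s_{n-1}$-invariants of the simple graded module $L(\lambda) = L(0,\lambda)$.

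The key reduction is via Frobenius coordinates. Writing $\lambda$ as a union of hooks $(n_i, r_i)$, $i = 1, \dots, l$, the partition data matches exactly the shape of the block decomposition $X = \oplus_{i,j} X_{i,j}$, $Y = \oplus_i Y_i$ in section \ref{sec:CMfxed}. I would want to use Theorem \ref{thm:factorsagree} to say that the $\s_{n-1}$-invariant module structure, and hence the matrix pair, is ``block-built'' out of the hook pieces --- but some care is needed here, because the factorization of Theorem \ref{thm:factorsagree} is along eigenvalues of $y_1$ (fibers of $\pi$), whereas the Frobenius decomposition of $L(\lambda)$ is a more subtle phenomenon internal to a single fiber $\pi^{-1}(0)$. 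So instead the cleanest route is: (1) verify the statement directly for $\lambda$ a single hook $(n - r + 1, 1^{r-1})$, by explicitly writing down $L(\lambda)$ (its character as $\s_n$-module is the regular representation, and its graded structure is controlled by the $b$-function / $c$-function of the hook, which is standard from \cite{Baby}), computing the $n$-dimensional space $L(\lambda)^{\s_{n-1}}$ together with the matrices of $x_1, y_1$, and matching with \eqref{eq:list}; and (2) for general $\lambda$, show that $L(\lambda)^{\s_{n-1}}$ decomposes compatibly with the hook decomposition in a way that reproduces the off-diagonal blocks $X_{i,j}$ of section \ref{sec:CMfxed}.

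For step (1), the essential computation is that $y_1$ acts on $L(\text{hook})^{\s_{n-1}}$ as a single nilpotent Jordan block (this follows because $L(0,\lambda)$ is a graded module with one-dimensional graded pieces in the relevant range, so $y_1$, of degree $-1$, shifts the grading by one and is nonzero as far as dimension permits), and that the ratios of the $x_1$-action on consecutive graded pieces are governed by the commutation relation $[y_1, x_1] = -\sum_{k \neq 1} s_{1k}$ evaluated on the invariants; tracking the scalar by which $\sum_{k} s_{1k}$ acts on $L(\lambda)_j^{\s_{n-1}}$ yields exactly the list $1, 2, \dots, r-1; -(n-r), \dots, -1$ of subdiagonal entries. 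For step (2), I would use that $L(\lambda)$, restricted along the parabolic $\s_{\lambda}$ or analyzed via its radical filtration, has its $\s_{n-1}$-invariants filtered with subquotients the invariant spaces of the hook constituents, and that the $x_1$-action mixes adjacent hooks by precisely the predicted amount; the indexing of diagonals ($r_j - r_i - 1$) and the entry values ($\pm n_i$, with the stated pattern of leading zeros) should drop out of the same Dunkl-operator computation, now carried out across blocks.

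The main obstacle I expect is step (2): controlling the off-diagonal blocks $X_{i,j}$. The diagonal blocks and the $Y$-action are essentially forced by the graded structure of a single hook module and are not hard; but proving that $x_1$ couples the $i$-th and $j$-th hook pieces with a matrix supported on the $(r_j - r_i - 1)$-diagonal, with the exact entries $n_i$ resp.\ $-n_i$ and the exact count of boundary zeros, requires a genuinely global understanding of how $L(\lambda)^{\s_{n-1}}$ sits inside $L(\lambda)$ --- either through an explicit basis (e.g.\ via the coinvariant-algebra / Specht-module description of $L(\lambda)$ from \cite{Baby}) or by a dimension/weight count forcing the answer once uniqueness of the $\Cs$-fixed point with the given graded character is invoked. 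An alternative finish, avoiding the hardest combinatorics, is to observe that both $\chi \mapsto \psi_n(\chi)$ and $\lambda \mapsto \mbf{X}_\lambda$ are defined over $\Z$ and are $\Cs$-fixed, and then to pin down the match by a degeneration/monomial-leading-term argument together with the already-established diagonal entries; but I would first attempt the direct representation-theoretic computation above.
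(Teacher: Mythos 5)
Your proposal takes a genuinely different route from the paper, and it has a real gap --- one you yourself flag. You try to compute the matrices of $x_1, y_1$ on $L(\lambda)^{\s_{n-1}}$ directly, reducing to hooks via the Frobenius decomposition and then gluing. Even granting step (1), step (2) --- reproducing Wilson's off-diagonal blocks $X_{i,j}$ with the exact diagonal index $r_j - r_i - 1$, the values $\pm n_i$, and the precise pattern of boundary zeros --- is not addressed; you only say it ``requires a genuinely global understanding'' and propose no actual mechanism. There is also a structural problem with the reduction itself: the Frobenius hook decomposition of $\lambda$ does not correspond to any decomposition of $L(\lambda)$ as a module (it is not a parabolic restriction, and $\s_{n-1}$-invariants do not filter along hook constituents in any obvious way), so ``filtered with subquotients the invariant spaces of the hook constituents'' needs justification that is not given and is not clearly true. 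Your alternative finish (degeneration/leading-term plus the diagonal entries) is too vague to rescue this, because several distinct $\Cs$-fixed points can share the same diagonal blocks.

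The paper sidesteps the whole problem by finding an invariant that is computable on both sides and injective on fixed points. It defines a map $\rho : \CM_n \to \C^n/\s_n$ by $(X,Y) \mapsto $ spectrum of $YX$, and a parallel map $\rho : \cX_n \to \C^n/\s_n$ coming from the centre $\C[z_1,\dots,z_n]^{\s_n}$ of the degenerate affine Hecke algebra $\dH_n \hookrightarrow \H_n$, via $z_i = y_i x_i + \sum_{j<i} s_{ij}$. Proposition \ref{prop:equalrho} shows these agree under $\psi_n$ (by checking on a dense open set of simples that restrict to irreducible $\dH_n$-modules $M(a)$, using Lemma \ref{lem:matrixz1}); Lemma \ref{lem:rowreduce} and Proposition \ref{prop:rhofixed} compute $\rho(\mbf{X}_\lambda) = \Res_{\lambda^t}(q)$ by explicit row reduction of the $YX$ from Wilson's representative; and Martino's result gives $\rho(\chi_{L(\lambda)}) = \Res_{\lambda^t}(q)$ via the Jucys--Murphy eigenvalues. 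Since a partition is determined by its residue, $\rho$ separates $\Cs$-fixed points, and the theorem follows without ever touching the off-diagonal blocks of $X$. This is precisely the ``distinguishing invariant'' idea your alternative finish gestures at, but the paper identifies the right invariant and carries it through; your main plan, as written, does not close the hardest step.
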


\subsection{The proof of Theorem \ref{thm:fixedmatch}}\label{sec:residuemap}

The Young diagram $Y_\lambda$ of $\lambda \vdash n$ is the set 
$$
\{ (i,j) \in \Z^2 \ | \ 0 \le j \le \ell(\lambda) -1, \ 0 \le i \le \lambda_j - 1 \}.
$$
The \textit{content} of the box $(i,j)$ is $\cont(i,j) := i - j$. We define the residue of $\lambda$ to be the Laurent polynomial $\Res_{\lambda}(q) = \sum_{(i,j) \in Y(\lambda)} q^{\cont(i,j)}$. It defines a point in $\C^n / \s_n$

We denote by $\rho$ the map $\CM_n \ra \g // \GL_n \simeq \C^n / \s_n$ given by $(X,Y) \mapsto Z := YX$. It will become apparent below that this morphism is dominant. Perversely, a point $\sum_{i = 1}^k n_i \kappa_i$ in $\C^n / \s_n$ will also be thought of as a formal Laurent polynomial $\sum_{i = 1}^k n_i q^{\kappa_i}$. In particular, the polynomials $\Res_{\lambda}(q)$ define points in $\C^n / \s_n$. 

We wish to calculate the image of the fixed points $\mbf{X}_\lambda$ under $\rho$. Write $Z = YX = \oplus_{i,j} Z_{i,j}$, where $Z_{i,j}$ is a matrix of size $n_i \times n_j$. Then $Z_{i,j}$ has non-zero entries only on the $r_j - r_i$ diagonal. The square matrix $Z_{i,i}$ has entries only on the main diagonal, from top left to bottom right they are
\beq{eq:diagentries}
1,2, \ds, r_i - 1; -(n_i - r_i), \ds, -2,-1,0.
\eeq
For $i > j$, the non-zero diagonal of $Z_{i,j}$ has $r_i - 1$ entries equal to $n_i$ followed by $n_i - r_i$ entries equal to $0$. If $i < j$ then the non-zero diagonal of $Z_{i,j}$ has $r_j - 1$ entries equal to $0$ followed by $n_j - r_j + 1$ entries equal to $-n_i$. 

\begin{lem}\label{lem:rowreduce}
After row reduction $Z$ can be put in the form $\tilde{Z} = \oplus_{i,j} \tilde{Z}_{i,j}$ where $\tilde{Z}_{i,i} = Z_{i,i}$ for all $i$ and $\tilde{Z}_{i,j} = 0$ for $i > j$. 
\end{lem}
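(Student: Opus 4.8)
The plan is to perform the row reduction block-column by block-column, working from the rightmost block-column toward the left, and to check that each step leaves the diagonal blocks $Z_{i,i}$ unchanged while killing the strictly-lower blocks $Z_{i,j}$ with $i > j$. The starting observation is purely combinatorial: from the description of the non-zero diagonals, the block $Z_{i,i}$ has its only non-zero entries exactly on the main diagonal of the $i$-th block (at positions $1,\dots,r_i-1$ and $-(n_i-r_i),\dots,-1,0$), while $Z_{i,j}$ for $i > j$ sits on the $(r_j - r_i)$-diagonal with pattern ``$r_i-1$ copies of $n_i$, then $n_i - r_i$ copies of $0$'', and $Z_{i,j}$ for $i<j$ sits on the $(r_j-r_i)$-diagonal with pattern ``$r_j-1$ copies of $0$, then $n_j-r_j+1$ copies of $-n_i$''. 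In particular, since $r_i < r_j$ and $n_i - r_i > n_j - r_j$ whenever $i<j$, the diagonals on which these off-diagonal blocks live are controlled, and I would record precisely which row indices of block-row $i$ carry a non-zero entry in block $Z_{i,j}$.

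First I would fix a block-column index $j$ and consider the entries of $Z_{\bullet,j}$ lying above the ``diagonal'' contribution $Z_{j,j}$, i.e.\ the blocks $Z_{i,j}$ with $i>j$. The key algebraic point is that each non-zero entry of $Z_{i,j}$ (for $i>j$) equals $n_i$ and lies in a row of block-row $i$ that contains a non-zero \emph{diagonal} entry of $Z_{i,i}$ (one of $1,2,\dots,r_i-1$) — this is where the matching ``$r_i-1$ copies of $n_i$'' is used — and crucially lies in a \emph{column} of block-column $j$ that does \emph{not} meet the zero entry $0$ at the end of the $Z_{j,j}$ diagonal. Concretely, I would exhibit, for each non-zero entry of $Z_{i,j}$, a single row of block-row $j$ whose entry in $Z_{j,j}$ is a nonzero scalar sitting in the same column, so that one elementary row operation (adding a suitable multiple of that row of block-row $j$ to the appropriate row of block-row $i$) annihilates that entry. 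Doing this for all such entries clears $Z_{i,j}$ for every $i>j$.

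The two things to verify are that these operations (a) do not disturb the diagonal blocks and (b) do not resurrect lower blocks already cleared. For (a): the rows of block-row $j$ that we add have, outside of $Z_{j,j}$, only entries in blocks $Z_{j,m}$ with $m>j$; adding them to rows of block-row $i>j$ therefore only affects blocks $Z_{i,m}$ with $m>j$ (the strictly-\emph{upper} part), and in particular never touches $Z_{i,i}$ provided $i > j$, nor any $Z_{i,i'}$ with $i'\le j$, so the diagonal and already-cleared lower blocks are untouched. For (b): since we process block-columns in decreasing order of $j$, by the time we clear block-column $j$ we have not yet touched block-columns $<j$, and clearing it only modifies block-columns $>j$, which were cleared earlier but only in their \emph{lower} part — and we are now modifying their \emph{upper} part, which is allowed. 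Running the induction on $j$ from $k$ down to $1$ then yields $\tilde Z_{i,j}=0$ for all $i>j$ with $\tilde Z_{i,i}=Z_{i,i}$, as claimed.

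The main obstacle I anticipate is bookkeeping: one must pin down exactly which row/column indices within the $n_i\times n_j$ blocks carry the non-zero entries, using the Frobenius inequalities $r_i>r_j$, $n_i-r_i>n_j-r_j$ for $i<j$ (equivalently $r_i<r_j$, $n_i-r_i>n_j-r_j$ for $i>j$), and check the one nontrivial alignment claim — that every non-zero entry of $Z_{i,j}$, $i>j$, shares its column with a \emph{non-zero} (not the terminal $0$) diagonal entry of $Z_{j,j}$. This is a finite combinatorial check with the hook data, and once it is set up the elementary-row-operation argument is routine; I would present the index computation carefully and then summarize the row operations rather than writing them all out.
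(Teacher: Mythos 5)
Your overall strategy — Gaussian elimination using the nonzero diagonal entries of $Z_{j,j}$ as pivots to kill the sub-diagonal blocks $Z_{i,j}$, $i>j$ — is the same as the paper's, and you correctly isolate the one nontrivial alignment: each nonzero entry of $Z_{i,j}$ (at position $(\alpha,\alpha+r_j-r_i)$ with $1\le\alpha\le r_i-1$) sits in a column that hits $Z_{j,j}$ at a \emph{nonzero} pivot, because $\alpha+r_j-r_i\le r_j-1<n_j$. However, your justification of steps (a) and (b) contains a directional error that invalidates the argument as written.

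You assert that the pivot rows of block-row $j$ ``have, outside of $Z_{j,j}$, only entries in blocks $Z_{j,m}$ with $m>j$.'' This is exactly backwards. For $m>j$ (an upper block), the nonzero rows of $Z_{j,m}$ are $u=r_j,\dots,n_j$, while the pivot row index $\alpha+r_j-r_i$ is $\le r_j-1$, so those entries vanish — this is the ``direct calculation'' the paper does and is precisely what you need to verify, but you never address it. On the other hand, for $m<j$ (a lower block), the nonzero rows of $Z_{j,m}$ are $u=1,\dots,r_j-1$, and the pivot row index lies in $[r_j-r_i+1,\,r_j-1]\subset[1,r_j-1]$, so the pivot row \emph{does} generically carry nonzero entries there. (Concretely: take $\lambda=(3,3,3,3)$ with hooks $(n_1,r_1)=(6,4)$, $(n_2,r_2)=(4,3)$, $(n_3,r_3)=(2,2)$; the pivot row of block-row $2$ used to clear $Z_{3,2}$ carries the entry $4$ in $Z_{2,1}$.) The paper avoids this issue by inducting on the block-\emph{row} index $i$ in \emph{increasing} order: when block-row $j<i$ is used as a source of pivots, the induction hypothesis guarantees that $Z_{j,m}=0$ for all $m<j$, so the pivot row really is zero outside of $Z_{j,j}$. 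Your scheme of sweeping block-columns in \emph{decreasing} order does not provide this: when you reach block-column $j$, the blocks $Z_{j,m}$ with $m<j$ have not been cleared, so your row operations will modify the yet-to-be-cleared blocks $Z_{i,m}$ with $m<j$, not the blocks with $m>j$ as you claim. (Note also that your statement ``only affects $Z_{i,m}$ with $m>j$ ... never touches $Z_{i,i}$ provided $i>j$'' is internally inconsistent, since $m=i>j$ is allowed by your own hypothesis.)

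The descending block-column scheme could be rescued: one checks that the induced modifications of $Z_{i,m}$ (for $m<j<i$) stay on the $(r_m-r_i)$-diagonal and within rows $1,\dots,r_i-1$, so the pivots of $Z_{m,m}$ still apply when block-column $m$ is reached. But this is an extra verification your proposal does not supply; and in any case the claimed reason why the scheme works is false. The cleaner route is the paper's: induct on $i$ ascending, so that at each stage the pivot row is already zero to the left (by induction) and zero to the right (by the $u\ge r_j$ bound), hence the row operation disturbs nothing but the single entry being killed.
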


\begin{proof}
The proof is a direct calculation. If the reader really wants to understand the proof, we recommend they draw a picture to see what's going on. 

Inductively on $i$, we claim that we can remove the non-zero entries in each row of $Z_{i,j}$, where $i > j$, by taking away some multiple of a certain row above the rows of $Z_{i,j}$ in such a way that all other blocks remain unchanged. So let us fix $i > j$ and we assume by induction that $Z_{i',j'} = 0$ for all $i' < i$ and $i' > j'$. Write $Z_{i,j} = (z_{\alpha,\beta})_{\alpha, \beta}$, where $1 \le \alpha \le n_i$, $1 \le \beta \le n_j$. Then, from the above description of $Z$ we see that the only non-zero entries $z_{\alpha, \beta}$ of $Z_{i,j}$ are $z_{\alpha, \alpha + r_j - r_i}$ for $\alpha = 1, \ds , r_i - 1$ (recall that $i > j$ implies that $r_j - r_i > 0$). Now consider the column of $Z$ containing $z_{\alpha, \alpha + r_j - r_i}$. This column intersects the main diagonal of $Z$ in the block $Z_{j,j} = (\hat{z}_{a,b})_{a,b}$ and the diagonal entry of $Z_{j,j}$ in this column is $\hat{z}_{\alpha + r_j - r_i, \alpha + r_j - r_i}$. Since $\alpha \le r_i - 1$, we have $\alpha + r_j - r_i \le r_j - 1 < n_j$. Therefore, (\ref{eq:diagentries}) implies that $\hat{z}_{\alpha + r_j - r_i, \alpha + r_j - r_i} \neq 0$ and we can certainly take away from the row of $Z$ containing $z_{\alpha, \alpha + r_j - r_i}$ a multiple of the row of $Z$ containing $\hat{z}_{\alpha + r_j - r_i, \alpha + r_j - r_i}$ such that the new value of $z_{\alpha, \alpha + r_j - r_i}$ is zero. 

We claim that $\hat{z}_{\alpha + r_j - r_i, \alpha + r_j - r_i}$ is the only non-zero entry of the $(\alpha + r_j - r_i)$th row. If this is the case, then it is clear that none of the other blocks of $Z$ are changed under this row operation. The induction hypothesis implies that all entries to the left of $\hat{z}_{\alpha + r_j - r_i, \alpha + r_j - r_i}$ are zero. Since $Z_{j,j}$ is diagonal, all the entries to the right of $\hat{z}_{\alpha + r_j - r_i, \alpha + r_j - r_i}$ in $Z_{j,j}$ are also zero. Therefore, any non-zero entry of the row would lie in a block $Z_{j,k}$ with $k > j$. We have $r_k - r_j < 0$. Let $Z_{j,k} = (\overline{z}_{u,v})_{u,v}$. Then, the only non-zero entries of $Z_{j,k}$ are $\overline{z}_{u,r_k - r_j + u}$ for $u = r_j, \ds, n_j$. But the row of $Z$ containing $\hat{z}_{\alpha + r_j - r_i, \alpha + r_j - r_i}$ intersects $Z_{j,k}$ in $(\overline{z}_{\alpha + r_j - r_i,1}, \ds,\overline{z}_{\alpha + r_j - r_i,n_k})$. Now $1 \le \alpha \le r_i - 1$ so $\alpha + r_j - r_i < r_j$ which implies $\overline{z}_{\alpha + r_j - r_i,v} = 0$ for all $v$ as claimed. 
\end{proof}

\begin{prop}\label{prop:rhofixed}
The image of the $\Cs$-fixed point $\mbf{X}_{\lambda} \in \CM_n$ under $\rho$ equals $\Res_{\lambda^t}(q)$.  
\end{prop}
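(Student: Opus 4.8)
The plan is first to unwind the definition of $\rho$ so that the statement becomes a determinant computation, then to use Lemma~\ref{lem:rowreduce} to factorize that determinant, and finally to match the answer with the combinatorics of Frobenius hooks. By definition $\rho$ sends the class of $(X,Y)$ to the $\GL_n$-orbit of $Z=YX$, and under the identification $\g /\!/ \GL_n\simeq\C^n/\s_n$ this orbit is recorded by the multiset of eigenvalues of $Z$, i.e.\ by the formal Laurent polynomial $\sum_s q^{\zeta_s}$ where $\det(qI_n-Z)=\prod_s(q-\zeta_s)$. So the first step is to reduce the proposition to the identity $\det(qI_n-Z)=\prod_{i=1}^l\det(qI_{n_i}-Z_{i,i})$ for $(X,Y)=\mbf{X}_\lambda$, using the explicit description of $Z=YX$ recorded before Lemma~\ref{lem:rowreduce}.

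The second step is to prove this factorization. The point to be careful about is that the row reduction of Lemma~\ref{lem:rowreduce} changes the characteristic polynomial in general, so it cannot be quoted verbatim; I would instead extract from the \emph{proof} of that lemma the fact that every row operation used subtracts a multiple of a row of $Z$ which has a single nonzero entry, namely a diagonal entry of some $Z_{j,j}$. Because the pivot rows are monomial in this sense, the identical sequence of operations applies over the field $\C(q)$ — with the coefficients now depending on $q$ — and transforms $qI_n-Z$ into a matrix which is block upper-triangular for the decomposition $\C^n=\bigoplus_i\C^{n_i}$, has vanishing strictly-lower blocks, and retains the diagonal blocks $qI_{n_i}-Z_{i,i}$; since each operation adds a multiple of one row to another it preserves the determinant, which gives the factorization. (An equivalent route: Lemma~\ref{lem:rowreduce} realizes the reduction as left multiplication by a matrix $U$ that is unipotent and lower-triangular with respect to the block decomposition, and then $UZU^{-1}$ is conjugate to $Z$, is again block upper-triangular, and has the same diagonal blocks $Z_{i,i}$.) Since $Z_{i,i}$ is the diagonal matrix with entries $(\ref{eq:diagentries})$, this shows $\rho(\mbf{X}_\lambda)$ is, as a Laurent polynomial, $\sum_{i=1}^l\bigl(q^{r_i-1}+\ds+q+q^0+q^{-1}+\ds+q^{-(n_i-r_i)}\bigr)$.

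The third step is the combinatorial identification of this sum with $\Res_{\lambda^t}(q)$. By construction $\lambda$ is the union of the Frobenius hooks $(n_i-r_i+1,1^{r_i-1})$, so $\lambda^t$ is the union of the transposed hooks $(r_i,1^{n_i-r_i})$; the $i$-th such hook, placed in $\lambda^t$ with its corner on the main diagonal, contributes one box of content $0$, an arm of $r_i-1$ boxes of contents $1,\ds,r_i-1$, and a leg of $n_i-r_i$ boxes of contents $-1,\ds,-(n_i-r_i)$ to $\Res_{\lambda^t}(q)=\sum_{(a,b)\in Y(\lambda^t)}q^{a-b}$. That is exactly the $i$-th summand above, and summing over $i$ runs over all boxes of $\lambda^t$, so $\rho(\mbf{X}_\lambda)=\Res_{\lambda^t}(q)$.

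I expect the middle step to be the main obstacle: upgrading Lemma~\ref{lem:rowreduce}, which is a statement about $Z$ alone rather than about $qI_n-Z$ or about conjugation, into a genuine computation of $\det(qI_n-Z)$. What makes it go through is precisely that the pivot rows used in that row reduction are monomial. The final step is routine bookkeeping, but it is worth noting where a careless argument would go wrong: the eigenvalue list of $Z_{i,i}$ has $r_i-1$ positive and $n_i-r_i$ negative entries — opposite to the arm and leg lengths of the $i$-th Frobenius hook of $\lambda$ — which is exactly why it is $\lambda^t$, and not $\lambda$, that appears in the answer.
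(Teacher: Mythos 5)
Your main argument is correct and follows essentially the same route as the paper: row-reduce $qI_n-Z$ (the paper writes $tI_n-Z$) by the operations of Lemma~\ref{lem:rowreduce}, observe that these operations preserve the determinant and yield a block upper-triangular matrix with diagonal blocks $qI_{n_i}-Z_{i,i}$, and then identify the resulting eigenvalue multiset with $\Res_{\lambda^t}(q)$. Your explanation of \emph{why} the reduction transports to $qI_n-Z$ — that the pivot rows have a single nonzero entry, on the diagonal, so the same elimination pattern applies over $\C(q)$ — is in fact more careful than the paper's one-line remark that ``the argument still works,'' and your hook-by-hook verification of $\Res_{\lambda^t}(q)$ is also right.

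One caveat: the parenthetical ``equivalent route'' is not correct as stated. Having $U$ block lower unipotent with $UZ$ block upper triangular does \emph{not} force $UZU^{-1}$ to be block upper triangular. For instance, with the $1+1$ block decomposition, $Z=\left(\begin{smallmatrix}1&0\\1&2\end{smallmatrix}\right)$ and $U=\left(\begin{smallmatrix}1&0\\-1&1\end{smallmatrix}\right)$ give $UZ=\left(\begin{smallmatrix}1&0\\0&2\end{smallmatrix}\right)$ but $UZU^{-1}=\left(\begin{smallmatrix}1&0\\2&2\end{smallmatrix}\right)$. The conjugate of course has the same characteristic polynomial, but that observation by itself does not produce the factorization $\det(qI_n-Z)=\prod_i\det(qI_{n_i}-Z_{i,i})$; you still need to row-reduce $qI_n-Z$ directly, as in your primary argument. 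So keep the first route and drop the aside.
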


\begin{proof}
The argument in the proof of Lemma \ref{lem:rowreduce} still works if we replace $Z$ by $t I_n - Z$ where $t$ is some indeterminant and we work over the field $\C(t)$. Therefore, Lemma \ref{lem:rowreduce} implies that $\mathrm{det} \ ( t I_n - Z) = \prod_{a \in J} (t - a)$ where $J$ is the multiset 
$$
\bigsqcup_{i = 1}^l \ \{ 1,2, \ds, r_i - 1, -(n_i - r_i), \ds, -2,-1,0 \},
$$
when $\lambda$ in Frobenius form is $(n_1, r_1), \ds, (n_l,r_l)$. Expressed in terms of the algebra $\Z[q^{\kappa} \ | \ \kappa \in \C]$, this is $\Res_{\lambda}(q^{-1}) = \Res_{\lambda^t}(q)$.  
\end{proof}

\subsection{Degenerate affine Hecke algebras}\label{sec:degaff}

Next we construct the analogue of $\rho$ for $\cX_n$. The fact that the degenerate affine Hecke algebra is a subalgebra of the rational Cherednik algebra of type $\mbf{A}$ is well-known and has been extensively used to study the representation theory of rational Cherednik algebras at $t = 1$ e.g. \cite{ChereDiffFourier} and \cite{GriffethJack}. Martino \cite{MoDegen} has shown that this embedding of the degenerate affine Hecke algebra is also extremely useful at $t = 0$. For us, it will be used to construct a map $\rho : \cX_n \rightarrow \C^n / \s_n$. 

\bdefn
The \tit{degenerate affine Hecke algebra} $\dH_n$ is the associative algebra generated by $\C[z_1, \ds, z_n]$ and $\s_n$, satisfying the defining relations
$$
s_i z_j = z_j s_i, \quad s_i z_i = z_{i+1} s_{i} - 1,
$$
for all $i$ and $j \neq i,i+1$, where $s_i := s_{i,i+1}$.
\edefn

We note that the defining relations imply that $z_i s_i = s_i z_{i+1} - 1$. Also, as vector spaces, $\dH_n \simeq \C[z_1, \ds, z_n] \o \C \s_n$ and the centre of $\dH_n$ is the subalgebra $\C[z_1, \ds, z_n]^{\s_n}$ of symmetric functions in the $z_i$'s, see \cite{KleshBook}. The following lemma is a direct calculation.

\begin{lem}
The map 
\beq{eq:zis}
z_i \mapsto y_i x_i + \sum_{j < i} s_{i,j} = x_i y_i - \sum_{j> i } s_{i,j}, \quad \forall \ 1 \le i \le n,
\eeq
and $w \mapsto w$ for all $w \in \s_n$ defines an embedding $\dH_n \hra H_n$ such that 
$$
[z_i,x_j] = \left\{ \begin{array}{ll}
x_j s_{i,j} & j > i\\
x_i s_{i,j} & j < i\\
- \sum_{k<i} x_i s_{i,k} - \sum_{k > i} x_k s_{i,k} & i = j.
\end{array}  \right.
$$
\end{lem}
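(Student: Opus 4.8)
The statement packages three things: that the assignment on generators (call it $\phi$) is a well-defined algebra map $\dH_n\to H_n$, that $\phi$ is injective, and that the displayed formula for $[z_i,x_j]$ holds. I would treat them in that order, noting that the commutator formula will essentially fall out of the computations needed to verify the cross relations. My plan for well-definedness is to check the defining relations of $\dH_n$ one family at a time. First, the two proposed expressions for $\phi(z_i)$ agree: by the relation $[y_i,x_i]=-\sum_{k\neq i}s_{ik}$ from (\ref{eq:releq}), we have $y_ix_i+\sum_{k<i}s_{ik}=x_iy_i-\sum_{k>i}s_{ik}$. Next, for $j\neq i,i+1$ the identity $s_i\phi(z_j)=\phi(z_j)s_i$ holds because, in either form, $\phi(z_j)$ is assembled from $x_j,y_j$, which $s_i$ fixes, together with a sum $\sum_b s_{jb}$ of transpositions which conjugation by $s_i$ only permutes; I would dispatch the two subcases $j<i$ and $j>i+1$ separately. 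For the relation $s_i\phi(z_i)=\phi(z_{i+1})s_i-1$ I would compute directly, using $s_ix_i=x_{i+1}s_i$, $s_iy_i=y_{i+1}s_i$, $s_is_{ia}s_i=s_{i+1,a}$ and $s_{i,i+1}=s_i$, obtaining $s_i\phi(z_i)=\big(y_{i+1}x_{i+1}+\sum_{a<i}s_{i+1,a}\big)s_i=(\phi(z_{i+1})-s_i)s_i=\phi(z_{i+1})s_i-1$.

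The last and most delicate relation is the commutativity $[\phi(z_i),\phi(z_j)]=0$ for $i<j$. Here I would expand using $\phi(z_i)=x_iy_i-\sum_{a>i}s_{ia}$ and $\phi(z_j)=x_jy_j-\sum_{b>j}s_{jb}$ and evaluate the four resulting brackets with the relations of $H_n$: one finds $[x_iy_i,x_jy_j]=(x_iy_i-x_jy_j)s_{ij}$, $[x_iy_i,\sum_{b>j}s_{jb}]=0$, and $[\sum_{a>i}s_{ia},x_jy_j]=(x_iy_i-x_jy_j)s_{ij}$, so the mixed terms cancel and $[\phi(z_i),\phi(z_j)]$ collapses to $[\sum_{a>i}s_{ia},\sum_{b>j}s_{jb}]$. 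This residual bracket vanishes because the elements $\sum_{a>i}s_{ia}\in\C\s_n$ are the images of the Jucys--Murphy elements $\sum_{a<k}s_{ak}$ under the order-reversing automorphism $k\mapsto n+1-k$ of $\s_n$, and Jucys--Murphy elements commute. I expect this reduction to commutativity of Jucys--Murphy elements to be the one genuinely non-mechanical point; everything else is bookkeeping with (\ref{eq:releq}).

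For the displayed commutator identity I would just run the relevant instance of the same calculation. If $j>i$, use $z_i=x_iy_i-\sum_{k>i}s_{ik}$: then $[x_iy_i,x_j]=x_i[y_i,x_j]=x_is_{ij}$, while $[s_{ik},x_j]=(x_{s_{ik}(j)}-x_j)s_{ik}$ is nonzero only for $k=j$, where it equals $(x_i-x_j)s_{ij}$; subtracting gives $[z_i,x_j]=x_js_{ij}$. If $j<i$, argue symmetrically from $z_i=y_ix_i+\sum_{k<i}s_{ik}$. If $j=i$, use $z_i=x_iy_i-\sum_{k>i}s_{ik}$ with $[x_iy_i,x_i]=x_i[y_i,x_i]=-\sum_{k\neq i}x_is_{ik}$ and $[s_{ik},x_i]=(x_k-x_i)s_{ik}$ for $k>i$; collecting the terms yields $[z_i,x_i]=-\sum_{k<i}x_is_{ik}-\sum_{k>i}x_ks_{ik}$.

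Finally, injectivity. I would pass to associated graded algebras. Filter $H_n$ by declaring $x_i,y_i$ to have degree one and $\s_n$ degree zero; by the PBW theorem $\gr H_n\cong\C[\h\oplus\h^*]\rtimes\s_n$. Filter $\dH_n$ by $\deg z_i=2$ and $\deg w=0$ for $w\in\s_n$; since $\dH_n\cong\C[z_1,\dots,z_n]\o\C\s_n$ as a vector space and the only inhomogeneous defining relation $s_iz_i=z_{i+1}s_i-1$ degenerates to $s_iz_i=z_{i+1}s_i$, this gives $\gr\dH_n\cong\C[z_1,\dots,z_n]\rtimes\s_n$. The homomorphism $\phi$ is filtered, with principal symbols $\sigma(\phi(z_i))=x_iy_i$ and $\sigma(\phi(w))=w$, so it induces $\gr\phi\colon\C[z_1,\dots,z_n]\rtimes\s_n\to\C[\h\oplus\h^*]\rtimes\s_n$, $z_i\mapsto x_iy_i$, $w\mapsto w$. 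Because $x_1y_1,\dots,x_ny_n$ are algebraically independent in $\C[\h\oplus\h^*]$ and this smash product is free over $\C[\h\oplus\h^*]$ with basis $\s_n$, the map $\gr\phi$ is injective; hence $\phi$ is injective, and $\phi\colon\dH_n\hookrightarrow H_n$ is the desired embedding.
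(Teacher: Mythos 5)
Your proof is correct and complete. The paper itself offers no argument for this lemma beyond the remark that it is ``a direct calculation,'' so there is no stated proof to compare against; your approach---checking the defining relations of $\dH_n$ one by one and then establishing injectivity via the associated graded---is exactly the direct calculation the author presumably has in mind, carried out in full. The small amount of genuine content you supply beyond bookkeeping is worth noting: the reduction of $[\phi(z_i),\phi(z_j)]$ to the commutator $\bigl[\sum_{a>i}s_{ia},\sum_{b>j}s_{jb}\bigr]$ and the observation that these are $w_0$-conjugates of Jucys--Murphy elements (which is consistent with the paper's later remark that $-\sum_{j>i}s_{i,j}=-w_0\Theta_iw_0$, understood as a statement about the family as a whole), and the choice $\deg z_i=2$ so that $\phi$ is filtered with principal symbol $z_i\mapsto x_iy_i$, after which injectivity of $\gr\phi$ reduces to algebraic independence of $x_1y_1,\ldots,x_ny_n$. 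All the individual relation checks ($s_i\phi(z_j)=\phi(z_j)s_i$ for $j\neq i,i+1$, $s_i\phi(z_i)=\phi(z_{i+1})s_i-1$, the three cases of $[z_i,x_j]$) are carried out correctly using (\ref{eq:releq}) and the $\s_n$-action on $\h,\h^*$.
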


Therefore we will consider $\dH_n$ as a subalgebra of $\H_n$. Theorem 3.4 of \cite{MoDegen} says that the centre $\C[z_1, \ds, z_n]^{\s_n}$ of $\dH_n$ is contained in $\ZH_n$. The embedding $\C[z_1, \ds, z_n]^{\s_n} \hookrightarrow \ZH_n$ defines a dominant morphism $\rho : \cX_n \ra \C^n / \s_n$. A standard tool in the study of the representation theory of $\dH_n$ is induction from representations of $\C[z_1, \ds, z_n]$. Therefore, for $a \in \C^n$, define
$$
M(a) := \dH_n \o_{\C[z_1, \ds, z_n]} a,
$$
where $a$ is considered a character of $\C[z_1, \ds, z_n]$ via evaluation. The module $M(a)$ is isomorphic to the regular representation as an $\s_n$-module. Let $\mc{D}$ be the dense, open subset of $ \C^n$ consisting of all points $a = (a_1, \ds, a_n)$ such that $a_i - a_j \neq 0, \pm 1$ for all $1 \le i \neq j \le n$. Then, it is shown in \cite[Lemma 6.1.2]{KleshBook} that $M(a)$ is an irreducible $\dH_n$-module for all $a \in \mc{D}$. 

\begin{lem}\label{lem:restrictirr}
There exists a dense open subset $U$ of $\cX_n$ such that each irreducible $\H_n$-module $L$, whose support is contained in $U$, is isomorphic to $M(a)$ as a $\dH_n$-module, for some $a \in \mc{D}$. In particular, each such $L$ is irreducible as a $\dH_n$-module. 
\end{lem}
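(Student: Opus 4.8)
The plan is to produce the dense open set $U$ as the locus where the restriction of $L$ to $\dH_n$ retains enough information to force $L \simeq M(a)$, and to identify $U$ via the morphism $\rho \colon \cX_n \to \C^n/\s_n$ constructed above from the inclusion $\C[z_1,\dots,z_n]^{\s_n} \hookrightarrow \ZH_n$. First I would set $\mc{D}/\s_n \subset \C^n/\s_n$ to be the image of the open set $\mc{D}$ (it is $\s_n$-stable, so this is an open subset), and define $U := \rho^{-1}(\mc{D}/\s_n)$. Since $\rho$ is dominant and $\mc D/\s_n$ is dense open, $U$ is a dense open subset of $\cX_n$ (using that $\cX_n$ is irreducible, as $Z_n$ is an affine domain). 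It remains to check that any irreducible $\H_n$-module $L$ supported at a point of $U$ is isomorphic to $M(a)$ as a $\dH_n$-module for a suitable $a \in \mc{D}$.

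Fix such an $L$, with central character $\chi_L$ of $\ZH_n$; by construction $\chi_L$ restricted to $\C[z_1,\dots,z_n]^{\s_n}$ corresponds to a point of $\mc{D}/\s_n$, i.e. to an $\s_n$-orbit of some $a = (a_1,\dots,a_n) \in \mc{D}$. Now $L$, viewed as a $\dH_n$-module, is a module on which the centre $\C[z_1,\dots,z_n]^{\s_n}$ of $\dH_n$ acts by the character corresponding to the orbit of $a$. Because $a_i - a_j \neq 0$ for $i \neq j$, the eigenvalues of $z_1,\dots,z_n$ are forced to be a permutation of the $a_i$ on any $\dH_n$-submodule generated by a joint generalized eigenvector, so $\C[z_1,\dots,z_n]$ acts on $L$ with (generalized) eigenvalues precisely among $a_1,\dots,a_n$. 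The key structural input is that $L$ is the regular representation of $\s_n$ (recalled in Section~\ref{sec:typeA}), hence $\dim_\C L = n! = \dim_\C M(a)$. So it suffices to exhibit a nonzero $\dH_n$-module map $M(a) \to L$: by irreducibility of $M(a)$ for $a \in \mc D$ (\cite[Lemma 6.1.2]{KleshBook}) such a map is injective, and then it is an isomorphism by dimension count.

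To produce the map, I would pick a joint generalized eigenvector $v \in L$ for $\C[z_1,\dots,z_n]$; after reordering the $a_i$ (i.e.\ replacing $a$ by an $\s_n$-conjugate, still in $\mc D$) we may assume $z_i v = a_i v$ for the actual eigenvalue, using that on $\mc D$ the eigenvalues $a_i - a_j \neq \pm 1$ prevent any nontrivial Jordan block: the relation $s_i z_i = z_{i+1} s_i - 1$ shows that if $z$ had a genuine Jordan block then applying $s_i$ would force a forbidden difference of $\pm 1$ among the eigenvalues, a standard calibration argument as in \cite{KleshBook}. With $z_i v = a_i v$ exactly, the universal property of $M(a) = \dH_n \otimes_{\C[z_1,\dots,z_n]} a$ gives a $\dH_n$-module homomorphism $M(a) \to L$ sending $1 \otimes 1 \mapsto v$, which is nonzero, hence the desired isomorphism. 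The last sentence of the lemma is then immediate: $M(a)$ is irreducible as a $\dH_n$-module for $a \in \mc D$, so $L$ is too.

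The main obstacle is the semisimplicity/calibration step: showing that $\C[z_1,\dots,z_n]$ acts on $L$ not merely with spectrum $\{a_i\}$ but \emph{diagonalizably}, so that a genuine eigenvector $v$ with the right eigenvalue tuple exists, which is exactly what is needed to invoke the universal property of $M(a)$. This is where the precise definition of $\mc D$ (the conditions $a_i - a_j \neq 0, \pm 1$) enters in an essential way, and one has to run the standard argument that on a module with central character in $\mc D/\s_n$ the polynomial subalgebra of $\dH_n$ acts semisimply — I expect this to follow by combining the intertwining-operator formulas for $\dH_n$ with the hypothesis, much as in \cite[Ch.~6]{KleshBook}, but it is the one point that requires genuine care rather than bookkeeping.
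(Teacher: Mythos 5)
Your proposal matches the paper's proof almost exactly: the same choice of $U$ as $\rho^{-1}$ of the image of $\mc{D}$ in $\C^n/\s_n$, the same argument that $U$ is dense open (using that $\rho$ is dominant and $\cX_n$ is irreducible), and the same conclusion via a nonzero $\dH_n$-map $M(a) \to L$ together with the dimension count $\dim L = n! = \dim M(a)$ and the irreducibility of $M(a)$ for $a \in \mc{D}$.

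The one place you deviate, and which you flag as the ``main obstacle'', is in fact not an obstacle, and the paper does not need (and does not give) the calibration argument you sketch. You do not need the $\C[z_1,\dots,z_n]$-action on $L$ to be diagonalizable; you only need \emph{one} joint eigenvector, and for a commuting family of operators on a finite-dimensional $\C$-vector space such a vector always exists (inductively intersect eigenspaces). Having chosen any joint eigenvector $v$ with eigenvalue tuple $a = (a_1,\dots,a_n)$, the elementary symmetric polynomials $e_k(z_1,\dots,z_n)$ lie in $\C[z_1,\dots,z_n]^{\s_n} \subset \ZH_n$, hence act on $L$ by the scalar $\chi_L(e_k)$; evaluating on $v$ forces $e_k(a) = \chi_L(e_k)$ for all $k$, so $a$ lies in the $\s_n$-orbit prescribed by $\chi_L$, which is contained in $\mc{D}$ by the choice of $U$. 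That is all that is required to invoke the universal property of $M(a)$ and conclude as you do. Your Jordan-block argument via the relations $s_i z_i = z_{i+1}s_i - 1$ would indeed show more (full semisimplicity of the $z_i$-action on $L$), but that stronger fact is not needed for the lemma.
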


\begin{proof}
Let $U = \rho^{-1}(\overline{\mc{D}}) \subset \cX_n$, where $\overline{\mc{D}}$ denotes the image of $\mc{D}$ in $\C^n / \s_n$. Since $\overline{\mc{D}}$ is open in $\C^n / \s_n$, $U$ is open in $\cX_n$. The PBW theorems for $\H_n$ and $\dH_n$ imply that the morphism $\rho$ is dominant. Therefore, there exists a dense open subset $U'$ of $\C^n / \s_n$ such that $U' \subset \rho(\cX_n)$. Thus, $U' \cap \mc{D} \neq \emptyset$ implies that $U$ is non-empty and hence dense in $\cX_n$ because $\cX_n$ is irreducible. Let $L$ be a simple $\H_n$-module whose support is in $U$. Choose $v \in L$ to be a joint eigenvector for $z_1, \ds, z_n$. If $a_1, \ds, a_n$ are the corresponding eigenvalues of the $z_i$'s then $a = (a_1, \ds, a_n) \in \mc{D}$ and $1 \o a \mapsto v$ defines a non-zero $\dH_n$-module homomorphism $M(a) \ra L$. This is an isomorphism because $\dim M(a) = \dim L$ and $M(a)$ is irreducible. 
\end{proof}

\begin{lem}\label{lem:matrixz1}
Let $L$ be a simple $\H_n$-module such that $L \simeq M(a)$ with $a \in \C^n$ as a $\dH_n$-module. Then the eigenvalues of $z_1 \in \dH_n$ on $L^{\s_{n-1}}$ are $a_1, \ds, a_n$.
\end{lem}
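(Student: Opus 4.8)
The plan is to argue entirely inside the degenerate affine Hecke algebra $\dH_n$; in fact the hypothesis that $L$ is simple will not be needed, and I would establish the statement for any module of the form $M(a)$. First I would fix the description $L \simeq M(a) = \dH_n \o_{\C[z_1,\ldots,z_n]}\C_a$ and write $v_0 = 1 \o 1$. Since each $z_i$ belongs to the subalgebra $\C[z_1,\ldots,z_n]$ over which the tensor product is formed, it may be slid across $\o$, so $z_i\, v_0 = a_i\, v_0$ for all $i$; and because $\dH_n \simeq \C[z_1,\ldots,z_n]\o\C\s_n$ as a vector space, we have $M(a) = \C\s_n\, v_0$, which is the left regular representation of $\s_n$ with basis $\{w\, v_0 : w \in \s_n\}$. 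I would also note right away that the relations give $s_i z_1 = z_1 s_i$ for $2 \le i \le n-1$, so $z_1$ commutes with the parabolic subgroup $\s_{n-1} = \mathrm{Stab}_{\s_n}(1)$ and therefore preserves $L^{\s_{n-1}}$.

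Next I would record an explicit basis of $L^{\s_{n-1}}$. For $1 \le j \le n$ set
$$
f_j \;=\; \frac{1}{(n-1)!}\sum_{w\,:\,w(j) = 1} w\, v_0 .
$$
The sets $\{w : w(j) = 1\}$, $j = 1,\ldots,n$, are precisely the $n$ distinct right cosets of $\s_{n-1}$ in $\s_n$, so the $f_j$ are supported on pairwise disjoint parts of the basis $\{w\, v_0\}$; they are therefore linearly independent, and since $\dim L^{\s_{n-1}} = [\s_n : \s_{n-1}] = n$ they form a basis. The aim is now to compute the matrix of $z_1$ in this basis.

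The one computational ingredient required is the conjugation identity
$$
w^{-1} z_1 w \;=\; z_j - \sum_{k=1}^{j-1} s_{k,j} \qquad\text{whenever } w(j) = 1 ,
$$
an elementary consequence of the relations $s_i z_i s_i = z_{i+1} - s_i$, $\ s_i z_{i+1} s_i = z_i + s_i$ and $s_i z_\ell s_i = z_\ell$ $(\ell \ne i, i+1)$, proved by induction on the length of $w$. Granting this, using $z_j\, v_0 = a_j\, v_0$, and using that right multiplication by $s_{k,j}$ is a bijection from $\{w : w(j) = 1\}$ onto $\{w : w(k) = 1\}$, I would compute
$$
z_1 f_j \;=\; \frac{1}{(n-1)!}\sum_{w(j)=1} w\,(w^{-1}z_1 w)\, v_0 \;=\; \frac{1}{(n-1)!}\sum_{w(j)=1} w\Bigl(z_j - \sum_{k<j} s_{k,j}\Bigr) v_0 \;=\; a_j f_j - \sum_{k=1}^{j-1} f_k .
$$
Hence the matrix of $z_1$ with respect to $(f_1,\ldots,f_n)$ is upper triangular with diagonal entries $a_1,\ldots,a_n$, and these are exactly its eigenvalues, which is the assertion of the lemma.

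I expect the only genuine difficulty to be bookkeeping: getting the conjugation identity with the correct indices and signs, and keeping the distinction between left and right cosets of $\s_{n-1}$ (and the question of which adjacent transpositions commute with $z_1$) perfectly straight. Once those are in place the computation of $z_1 f_j$ is a two-line manipulation and the conclusion is immediate.
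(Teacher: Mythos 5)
Your proof is correct and takes essentially the same route as the paper: it works with the same basis of $L^{\s_{n-1}}$ (your $f_j$ coincides with the paper's $e_0 s_{1,j}\otimes a$), proves the same commutation identity for $z_1$ against a coset representative of $\s_{n-1}$, and arrives at the same upper-triangular matrix with diagonal $a_1,\ldots,a_n$. Your remark that simplicity of $L$ is not actually used is accurate, and casting the key step as the conjugation formula $w^{-1}z_1w = z_j - \sum_{k<j}s_{k,j}$ is a slightly cleaner packaging of the paper's inductive identities.
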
 

\begin{proof}
Since $L$ is isomorphic to the regular representation as a $\s_n$-module, a basis of $L^{\s_{n-1}}$ is given by $\{ e_0 s_{1,i} \o a \ | \ 1 \le i \le n \}$, where $e_0$ is the trivial idempotent in $\C \s_{n-1}$. The lemma follows from a direct calculation which shows that action of $z_1$ on $L^{\s_{n-1}}$ with respect to this basis is given by the matrix
$$
z_1 = \left( \begin{array}{cccc}
a_1 & -1 & \ldots & -1 \\
0 & a_2 & & \vdots \\
\vdots & \ddots & \ddots & -1 \\
0 & \ldots & 0 & a_n 
\end{array} \right) .
$$
Note that $s_{1,i} = s_1 \cdots s_{i-2} s_{i-1} s_{i-2} \cdots s_1 \in \s_n$. Inductively, one can show that
$$
z_1 s_1 \cdots s_{i-2} s_{i-1} = s_1 \cdots s_{i-2} s_{i-1} z_i - \sum_{j = 1}^{i-1} s_1 \cdots \wh{s}_{j} \cdots s_{i-1},
$$
where $\wh{\bullet}$ is used to denote omission. Similarly,
$$
z_i s_{i-1} s_{i-2} \cdots s_1 = s_{i-1} s_{i-2} \cdots s_1 z_1 + \sum_{j = 1}^{i-1} s_{i-1} \cdots \wh{s}_j \cdots s_1.
$$
Therefore, $z_1 s_{1,i} = s_{1,i} z_i - \sum_{j = 1}^{i-1} s_1 \cdots \wh{s}_{j} \cdots s_{i-1} s_{i-2} \cdots s_1$. Now, for $j < i-1$,
$$
\sum_{j = 1}^{i-1} s_1 \cdots \wh{s}_{j} \cdots s_{i-1} s_{i-2} \cdots s_1 = (1,i,j+1),
$$
where $(1,i,j+1)$ denotes a permutation written in cycle notation, and $s_1 \cdots \wh{s}_{i-1} s_{i-2} \cdots s_1 = 1$. Hence
$$
z_1 s_{1,i} = s_{1,i} z_i - 1 - \sum_{j = 1}^{i-2} (1,i,j+1).
$$
For each $i,j$, there exists some $k$ such that $e_0 (1, i, j+1) = e_0 s_{1,k}$. If we write 
$$
e_0 s_{1,i} = \frac{1}{(n-1)!} \sum_{\stackrel{\sigma \in \s_n}{\sigma(i) = 1}} \sigma,
$$
then clearly $k = j+1$. Thus, $z_1 e_0 s_{1,i}  = e_0 s_{1,i} z_i - e_0 - \sum_{j = 2}^{i-1} e_0 s_{1,j} = e_0 s_{1,i} z_i - \sum_{j = 1}^{i-1} e_0 s_{1,j}$. This gives the matrix form of $z_1$ described above.
\end{proof}

\begin{prop}\label{prop:equalrho}
The following diagram is commutative
$$
\xymatrix{
\cX_n \ar[rr]^{\psi_n} \ar[dr]_{\rho} & & \CM_n \ar[dl]^{\rho} \\
& \C^n / \s_n & 
}
$$
\end{prop}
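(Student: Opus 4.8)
The plan is to verify commutativity on a dense set of closed points and then invoke separatedness of the target. Precisely: both $\rho\circ\psi_n$ and $\rho$ are morphisms from the irreducible, reduced affine variety $\cX_n$ to the separated variety $\C^n/\s_n$, so they agree as soon as they agree on the closed points of the dense open subset $U\subset\cX_n$ produced by Lemma~\ref{lem:restrictirr}. So fix such a closed point $\chi_L$, where $L$ is a simple $\H_n$-module with support in $U$; by Lemma~\ref{lem:restrictirr} we have $L\cong M(a)$ as a $\dH_n$-module for some $a=(a_1,\dots,a_n)\in\mc{D}$.

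On the $\cX_n$ side the computation is immediate: on $M(a)$ the central subalgebra $\C[z_1,\dots,z_n]^{\s_n}\subset\ZH_n$ acts through the character $f\mapsto f(a)$, so by definition of $\rho$ the point $\rho(\chi_L)\in\C^n/\s_n$ is the image $\overline{a}$ of $a$. On the $\CM_n$ side, $\psi_n(\chi_L)=(X,Y)$ with $X=x_1|_{L^{\s_{n-1}}}$ and $Y=y_1|_{L^{\s_{n-1}}}$, and $\rho$ sends this pair to the conjugacy class of the matrix $YX$. The key elementary observation is that in $\H_n$ one has $z_1=y_1x_1+\sum_{j<1}s_{1,j}=y_1x_1$, the correction sum being empty; moreover the subgroup $\s_{n-1}$ fixing $x_1$ satisfies $\sigma x_1=x_1\sigma$ and $\sigma y_1=y_1\sigma$ for all $\sigma\in\s_{n-1}$, so $x_1$, $y_1$ and hence $z_1$ all preserve the subspace $L^{\s_{n-1}}$ and $YX=(y_1x_1)|_{L^{\s_{n-1}}}=z_1|_{L^{\s_{n-1}}}$.

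Now I simply apply Lemma~\ref{lem:matrixz1}: the eigenvalues of $z_1$ acting on $L^{\s_{n-1}}$ are exactly $a_1,\dots,a_n$. Therefore $\rho(\psi_n(\chi_L))$ is the conjugacy class of $\mathrm{diag}(a_1,\dots,a_n)$, i.e.\ the point $\overline{a}\in\C^n/\s_n$, which coincides with $\rho(\chi_L)$. Since the closed points of $U$ are dense in $\cX_n$, this proves the diagram commutes.

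I do not expect a serious obstacle here. The only points requiring care are (i) the reduction to a dense set of closed points, which uses that $\C^n/\s_n$ is separated and $\cX_n$ is reduced and irreducible, and (ii) the bookkeeping that identifies the matrix product $YX$ with the restriction of $z_1$ to $L^{\s_{n-1}}$ (keeping straight which $\s_{n-1}$ is meant and checking it centralizes $x_1,y_1$); once these are in place the result falls out of Lemmas~\ref{lem:restrictirr} and~\ref{lem:matrixz1} with no further computation.
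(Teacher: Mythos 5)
Your proof is correct and follows essentially the same route as the paper: reduce to the dense open subset $U$ of Lemma~\ref{lem:restrictirr}, observe that $z_1 = y_1 x_1$ so that $z_1|_{L^{\s_{n-1}}} = YX$, and invoke Lemma~\ref{lem:matrixz1} to compare the two evaluations of $\rho$. The only difference is cosmetic — you spell out the density/separatedness reduction and the check that $\s_{n-1}$ centralizes $x_1,y_1$, which the paper leaves implicit.
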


\begin{proof}
Since $\psi_n$ is an isomorphism, it suffices to show that there is a dense open subset $U$ of $\cX_n$ on which the diagram is commutative. We take $U$ to be the subset of $\cX_n$ described in Lemma \ref{lem:restrictirr}. Each point in $U$ is labeled by an irreducible $\H_n$-module $L$ such that $L \simeq M(a)$ with $a \in \mc{D}$ as a $\dH_n$-module. The point $\chi_L$ labeled by $L$ is sent by $\psi_n$ to the pair $(X,Y)$, where $X = x_1 |_{L^{\s_{n-1}}}$ and $Y = y_1 |_{L^{\s_{n-1}}}$. Thus, 
$$
z_1 |_{L^{\s_{n-1}}} = y_1 x_1 |_{L^{\s_{n-1}}} = YX = Z.
$$
By definition (\ref{sec:residuemap}), $\rho \circ \psi_n (\chi_L)$ equals the eigenvalues of $Z$, which by Lemma \ref{lem:matrixz1} are $a_1, \ds, a_n$. On the other hand, $\rho(\chi_L)$ is the joint spectrum of $z_1, \ds, z_n$ on $M(a)$, which is $a_1, \ds, a_n$, because $\C[z_1, \ds, z_n]^{\s_n}$ is central in $\dH_n$.
\end{proof}

We are finally in a position to give a proof of Theorem \ref{thm:fixedmatch}. A partition is uniquely defined by its residue $\Res_{\lambda}(q)$. Therefore, Proposition \ref{prop:rhofixed} implies that $\mbf{X}_\lambda$ is uniquely defined by $\rho(\mbf{X}_\lambda)$. Hence Proposition \ref{prop:equalrho} implies that it suffices to show that $\rho(\chi_{L(\lambda)}) = \rho(\mbf{X}_\lambda)$. By Proposition \ref{prop:rhofixed}, $\rho(\mbf{X}_\lambda) = \Res_{\lambda^t}(q)$. To calculate $\rho(\chi_{\lambda})$, we need to calculate how the symmetric polynomials in the variables $z_i$ act on $L(\lambda)$. Let $w_0 \in \s_n$ be the longest word and $\Theta_i = \sum_{j < i} s_{i,j}$ the $i$th Jucys-Murphy element. Then, as noted in section 5.4 of \cite{MoDegen}, we have $- \sum_{j > i } s_{i,j} = - w_0 \Theta_i w_0$. Therefore, expression (\ref{eq:zis}) for the $z_i$ together with the arguments given in section 5.4 of \loccit imply that $\rho(\chi_{\lambda}) = \Res_{\lambda}(q^{-1})$, which equals $\Res_{\lambda^t}(q)$. 

\subsection{} Recall that the $\Cs$-fixed points in $\CM_n$ are $\mbf{X}_{\lambda}$, $\lambda \vdash n$. Define $\OmCM_\lambda := \{ \mbf{X} \in \CM_n \ |  \ \lim_{\alpha \rightarrow \infty} \mbf{X} = \mbf{X}_{\lambda} \}$. Then, it is shown in \cite[Proposition 6.11]{Wilson} that 
\beq{eq:zerofiber}
\CM(n \cdot 0) = \bigsqcup_{\lambda \vdash n} \ \OmCM_\lambda.
\eeq
If $\bb = n \cdot b_1$ for some $b_1 \in \C$ then the map $(X,Y) \mapsto (X,Y + b_1 I_n)$ defines an isomorphism $\CM(n \cdot 0) \simeq \CM(\bb)$ and (\ref{eq:zerofiber}) implies that we get a decomposition of $\CM(\bb)$ into cells $\OmCM_{b_1,\lambda}$. In general, if $\bb = \sum_{i = 1}^k n_i b_i$, then for every multipartition $\blambda = (\lambda^{(1)}, \ds, \lambda^{(k)})$ of $n$ such that $\lambda^{(i)} \vdash n_i$, define
$$
\OmCM_{\bb,\blambda} = \alpha^{-1}_{\bb} (\OmCM_{b_1,\lambda^{(1)}} \times \cdots \times \OmCM_{b_k,\lambda^{(k)}}).
$$
Then, it follows from (\ref{lem:W71}) and (\ref{eq:zerofiber}) that:

\begin{prop}\label{prop:fibersCM}
The space $\CM(\bb)$ is a finite disjoint union of affine spaces
$$
\CM(\bb) = \bigsqcup_{\blambda} \ \OmCM_{\bb,\blambda}
$$
where the union is over all multipartitions $\blambda = (\lambda^{(1)}, \ds, \lambda^{(k)})$ of $n$ such that $\lambda^{(i)} \vdash n_i$. 
\end{prop}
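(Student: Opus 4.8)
The plan is to read the statement off from the factorization isomorphism $\alpha_\bb$ of Lemma \ref{lem:W71} together with Wilson's cell decomposition (\ref{eq:zerofiber}) of the central fiber $\CM(n \cdot 0)$, so the proof is essentially a bookkeeping argument with no new input required.

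First I would handle the single-block case. For a fixed $i$, the map $(X,Y) \mapsto (X, Y - b_i I_{n_i})$ is an isomorphism $\CM(n_i \cdot b_i) \iso \CM(n_i \cdot 0)$, and (\ref{eq:zerofiber}) identifies the target with $\bigsqcup_{\lambda^{(i)} \vdash n_i} \OmCM_{\lambda^{(i)}}$. Transporting this decomposition back along the translation gives, by definition of the shifted cells, $\CM(n_i \cdot b_i) = \bigsqcup_{\lambda^{(i)} \vdash n_i} \OmCM_{b_i, \lambda^{(i)}}$. Taking the product over $i = 1, \dots, k$ and distributing the disjoint unions over the Cartesian product yields
$$
\prod_{i=1}^k \CM(n_i \cdot b_i) \;=\; \bigsqcup_{\blambda} \ \prod_{i=1}^k \OmCM_{b_i, \lambda^{(i)}},
$$
the union running over all multipartitions $\blambda = (\lambda^{(1)}, \dots, \lambda^{(k)})$ of $n$ with $\lambda^{(i)} \vdash n_i$; this index set is finite.

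Next I would invoke Lemma \ref{lem:W71}: since $\alpha_\bb : \CM(\bb) \rightsim \prod_i \CM(n_i \cdot b_i)$ is an isomorphism of affine varieties, it pulls the displayed decomposition back to a decomposition of $\CM(\bb)$ into disjoint locally closed pieces. By the very definition $\OmCM_{\bb,\blambda} = \alpha_\bb^{-1}(\prod_i \OmCM_{b_i,\lambda^{(i)}})$, this reads $\CM(\bb) = \bigsqcup_{\blambda} \OmCM_{\bb,\blambda}$. Finally, each $\OmCM_{\lambda^{(i)}}$ is an affine space by \cite[Proposition 6.11]{Wilson}, so each product $\prod_i \OmCM_{b_i,\lambda^{(i)}}$ is an affine space, and hence so is its isomorphic image $\OmCM_{\bb,\blambda}$.

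I do not expect a genuine obstacle here; the substance is entirely contained in Lemma \ref{lem:W71} and (\ref{eq:zerofiber}). The one point deserving a word of care is the assertion that the strata are honestly affine \emph{spaces} rather than merely locally closed subvarieties: for this one uses Wilson's explicit description of the Bia\l ynicki-Birula attracting sets of the $\Cs$-action on $\CM(n\cdot 0)$, not just the set-theoretic partition, and then propagates that property through the two isomorphisms (the translation by $b_i I_{n_i}$ and $\alpha_\bb$) used above.
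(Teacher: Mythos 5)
Your proposal is correct and follows exactly the paper's line of reasoning: the paper declares the proposition an immediate consequence of Lemma~\ref{lem:W71} and~(\ref{eq:zerofiber}), and your argument is precisely the unwinding of that assertion — translate each block to the central fiber, apply Wilson's cell decomposition, distribute over the product, and pull back along $\alpha_{\bb}$. The closing remark about needing Wilson's Proposition~6.11 (not merely a set-theoretic partition) to get genuinely affine strata is exactly where the paper also rests that claim.
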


\section{Grassmannians}\label{sec:grasses}

Wilson constructed an embedding of the Calogero-Moser space into the adelic Grassmannian, a certain infinite dimensional (non-algebraic!) space. This embedding will allow us to identify Lagrangians $\Omega_{\bb,\blambda}$ in $X_n$ with Schubert cells in $\Grad$. Defining this embedding requires the use of several auxiliary infinite dimensional Grassmannians. In order to facilitate the reader in keeping track of all these Grassmannians, we list them here with reference to where they are first defined in the text. We have
$$
\Graad \stackrel{\sim}{\longrightarrow} \Grad \hookrightarrow \Grat \subset \tGrat,
$$
where 
\begin{itemize}
\item $\Graad$ is the Adelic Grassmannian (\ref{defn:AdelicGrass}),
\item $\Grad$ is the adelic Grassmannian (\ref{defn:iadelicGrass}), 
\item $\Grat$ is the reduced rational Grassmannian (\ref{defn:ratGrass}),\\
and 
\item $\tGrat$ is the rational Grassmannian (\ref{defn:ratGrass}). 
\end{itemize}
We also have another pair of infinite dimensional Grassmannians, the canonical Grassmannian $\QE$, defined in (\ref{defn:canonical}), which is contained inside the quasi-exponential Grassmannian $\QGr$, defined in (\ref{defn:quasiGrass}). The Grassmannians $\Graad$, $\Grad$ and $\QE$ can all be realized as an infinite union of finite dimensional spaces
$$
\Graad = \bigsqcup_{n = 1}^{\infty} \Graad_n, \quad \Grad = \bigsqcup_{n = 1}^{\infty} \Grad_n, \quad \QE = \bigsqcup_{n = 1}^{\infty} \QE_n,
$$
and we have identifications $\Graad \stackrel{\sim}{\rightarrow} \Grad \stackrel{\sim}{\leftarrow} \QE$ which restrict to 
$$
\CM_n \stackrel{\sim}{\longrightarrow} \Graad_n \stackrel{\sim}{\longrightarrow} \Grad_n \stackrel{\sim}{\longleftarrow} \QE_n.
$$
Finally, in section \ref{sec:relGrass}, we will also consider the relative Grassmannian $\Grel_n$ and comment on the embedding $\CM_n \stackrel{\sim}{\longrightarrow} \QE_n \hookrightarrow \Grel_n$. It is possible to equip most of the above spaces with topologies, making the maps between them continuous. Since this fact will not play a role in what we do, it will be easier for us simply to think of them as sets.

\subsection{The adelic Grassmannian}\label{sec:adelic}

In this section we recall the definition of the Adelic Grassmannian $\Graad$ and the adelic Grassmannian $\Grad$. Before we can do this we need to define the rational Grassmannian.

\begin{defn}\label{defn:ratGrass}
The \tit{rational Grassmannian} $\tGrat$ is the space of all $\C$-subspaces $W$ of the field $\C(z)$ such that 
\begin{enumerate}
\item there exist polynomials $a(z),b(z) \in \C[z]$ such that $a(z)^{-1} \C[z] \supseteq W \supseteq b(z) \C[z]$;
\item we have $\dim a(z)^{-1} \C[z] / W = \deg (a)$. 
\end{enumerate}
The \tit{reduced rational Grassmannian} $\Grat$ is defined to be the proper subset of $\tGrat$ consisting of those spaces $W$ such that one can chose $a(z) = b(z)$ in the above definition.
\end{defn}

The Adelic Grassmannian is defined in a similar manner: For each $b \in \C$, let $\Grass_b$ be the Grassmannian of all subspaces $W$ of $\C(z)$ such that   
\begin{enumerate}
\item there exist some $k \ge 0$ with $(z - b)^{-k} \C[z] \supseteq W \supseteq (z - b)^k \C[z]$;
\item we have $\dim (z - b)^{-k} \C[z] / W = k$. 
\end{enumerate}
The space $\C[z]$ belongs to $\Grass_b$ for all $b \in \C$. 

\begin{defn}\label{defn:AdelicGrass}
The \tit{Adelic Grassmannian} is defined to be the restricted product
$$
\Graad := \prod^0_{b \in \C} \Grass_b,
$$
where $\{ W_b \}_{b \in \C}$ belongs to $\Graad$ if and only if $W_b = \C[z]$ for all but finitely many $b \in \C$.
\end{defn}

The support of $\{ W_b \} \in \Graad$ is the finite subset of $\C$ consisting of all $b$ such that $W_b \neq \C[z]$. It is clear that each $\Grass_b$ is a subspace of $\Grat$. This can be extended to an embedding of the whole of $\Graad$ into $\Grat$. For $b \in \C \cup \{ \infty \}$, define the symmetric bilinear form $\< f,g \>_b = \mathrm{res}_{z = b} f(z) g(z) dz$ on $\C(z)$. The annihilator of a subspace $W$ of $\C(z)$ with respect to this form is written
$$
\ann_{b} W = \{ f \in \C(z) \ | \ \< f,g \>_b = 0 \ \forall \ g \in W \}.
$$
The annihilator $\ann_{\infty} W$ will be denoted $W^*$. As noted in \cite[\S 2.2]{Wilson}, the involution $W \mapsto W^*$ preserves each of the subsets $\Grass_b$ (this not true of the other $\ann_b - $). Define the embedding $i : \Graad \ra \Grat$ by 
\beq{eq:adelicembed}
i(\{ W_b \}) = \bigcap_{b \in \C} \ann_b (W^*_b) \ .
\eeq

\begin{defn}\label{defn:iadelicGrass}
The image of the $i$ inside $\Grat$ is called the \tit{adelic Grassmannian} and denoted $\Grad$. 
\end{defn}

It is shown in Lemma 5.2 of \loccit  that $i$ is indeed an embedding. One can check directly that the restriction of $i$ to $\Grass_b$ is just the naive inclusion $\Grass_p \subset \Grat$. The action of $\Cs$ on $\C(z)$ given by $\alpha \cdot z = \alpha^{-1} z$ induces an action of $\Cs$ on $\Graad$ and $\Grat$, making $i$ equivariant. 

\subsection{}\label{sec:schubertdef1}

Let $W \in \Grass_b$. Then, by definition, there exists some $N \gg 0$ such that $(z - b)^N \C[z] \subset W \subset (z - b)^{-N} \C[z]$ and $\dim W / (z - b)^N \C[z] = N$. Thus, $W / (z - b)^N \C[z]$ belongs to $\Grass(N,b)$, the Grassmannian of $N$-dimensional subspaces of $(z - b)^{-N} \C[z] / (z - b)^N \C[z]$. There is a natural stratification of $\Grass(N,b)$ into Schubert cells (to be recalled in section \ref{sec:cells}) labeled by all partitions that fit into an $N \times N$ box. Then, $W / (z - b)^N \C[z]$ will belong to a particular cell, labeled by $\lambda$ say. We define the degree of $W$ to be $|\lambda|$. One can easily check that this definition is independent of the choice of $N$. Moreover, since the degree of $\C[z] \in \Grass_b$ is $0$, the definition extends additively to the whole of $\Graad$. Let $\Graad_n$ be the set of all spaces of degree $n$ and $\Grad_n$ the image of $\Graad_n$ under $i$. There is another characterization of the space $\Grad_n$ in terms of the $\tau$-function, see section \ref{sec:tau}. Namely, $\Grad_n$ is the set of all $W$ in $\Grad$ such that $\tau_W(t_1, 0,0,\ds )$ is a polynomial of degree $n$.  

One of the key results of \cite{Wilson} is the construction of an embedding of the Calogero-Moser space into the adelic Grassmannian. Since this construction is rather technical, we will not recall the details, but simply note the features that we will require. 

\begin{thm}
There is an embedding $\beta_n : \CM_n \ra \Grad$, whose image is $\Grad_n$. 
\end{thm}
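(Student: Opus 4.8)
The plan is to follow Wilson \cite{Wilson}: build $\beta_n$ explicitly from a Baker function, verify it is well defined and injective, and then identify its image with the degree-$n$ locus $\Grad_n$ using the $\tau$-function description of $\Grad_n$ recalled above together with the factorization of Proposition \ref{prop:fibersCM}.

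First I would represent a point of $\CM_n$ by $(X,Y)\in\CMM_n$ and write $[X,Y]+I_n = v\,w^{\mathsf{T}}$ for column vectors $v,w\in\C^n$; taking traces forces $w^{\mathsf{T}}v = n$, and the pair $(v,w)$ is unique up to $(v,w)\mapsto(\alpha v,\alpha^{-1}w)$. One shows, as in \cite{Wilson}, that $(X,v)$ is controllable and $(Y,w^{\mathsf{T}})$ observable; these conditions, and the entire construction below, are $\PGL_n$-equivariant, so all of it descends to $\CM_n$. To this data I attach the Baker function
$$
\psi_{X,Y}(x,z) = \Bigl(1 - w^{\mathsf{T}}(zI_n - X)^{-1}(xI_n - Y)^{-1} v\Bigr) e^{xz},
$$
which for fixed generic $x$ is rational in $z$, and I let $W_{X,Y}\subset\C(z)$ be the plane for which $\psi_{X,Y}$ is the normalized Baker function in the sense of \cite{Wilson}. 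One then checks that $W_{X,Y}\in\Grat$, that in fact $W_{X,Y}\in\Grad$, and that its support — as an element of $\Graad$ via $(\ref{eq:adelicembed})$ — is the eigenvalue set of $Y$, the local piece at $b$ recording the generalized $b$-eigenspace of $Y$ together with the corresponding block of $X$. Putting $\beta_n(X,Y):=W_{X,Y}$, the $\Cs$-equivariance is immediate from the formula, and the last remark says exactly that $\beta_n$ is compatible with the factorizations: stratifying $\CM_n = \bigsqcup_\bb \CM(\bb)$ by $\pi$ and applying $\alpha_\bb$ of $(\ref{lem:W71})$, the construction on $\CM(\bb)$ becomes the product over $i$ of the one-point constructions $\CM(n_i\cdot b_i)\to\Grass_{b_i}$.

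For injectivity, by $\PGL_n$-equivariance it is enough to recover $(X,Y)$ up to conjugation from $W = W_{X,Y}$. I would read the eigenvalues of $Y$, with multiplicity, off the support of $W$ and the local degrees there; at each support point $b$ the local space $W_b\in\Grass_b$ determines, via Wilson's one-point dictionary, the Jordan type of $Y$ on its $b$-eigenspace and the corresponding blocks of $X$; and controllability of $(X,v)$ fixes how these blocks glue into a single conjugacy class. It then remains to identify the image with $\Grad_n$. That $W_{X,Y}\in\Grad_n$ I would get from a degree count: the degree of $W_{X,Y}$, defined through the Schubert cell of the approximations $W/(z-b)^N\C[z]\in\Grass(N,b)$, equals the degree of $\tau_{W_{X,Y}}(t_1,0,0,\dots)$, which the explicit Baker function shows to be $n$. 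For surjectivity, given $W\in\Grad_n$, write $W = i(\{W_b\})$ with $\{W_b\}\in\Graad_n$, so the local degrees $n_b:=\deg W_b$ sum to $n$; by the one-point case — whose surjectivity is precisely the cell decomposition $(\ref{eq:zerofiber})$ of $\CM(n_b\cdot 0)$ matched against the Schubert stratification of the degree-$n_b$ locus of $\Grass_b$ — each $W_b$ is $W_{X^{(b)},Y^{(b)}}$ for a Calogero-Moser pair of size $n_b$ with $Y^{(b)}$ having the single eigenvalue $b$; assembling these via $\alpha_\bb^{-1}$ of $(\ref{lem:W71})$ yields $(X,Y)\in\CM(\bb)\subset\CM_n$ with $\beta_n(X,Y)=W$. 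One checks along the way that $\beta_n$ is algebraic on each $\CM_n$, an incidental fact used later.

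The hard part will be surjectivity — specifically the inverse construction that turns an abstract adelic space back into a matrix pair and, above all, verifies that the blocks reconstructed point-by-point glue to a pair $(X,Y)$ for which $\rk([X,Y]+I_n)$ is exactly one, not merely bounded above. This rank-and-dimension bookkeeping is the technical core of \cite{Wilson}, and is why the present article quotes the theorem rather than reproving it.
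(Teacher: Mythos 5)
The paper itself does not prove this statement: immediately before the theorem the text says ``Since this construction is rather technical, we will not recall the details, but simply note the features that we will require,'' and cites \cite{Wilson}. So there is no in-paper proof to compare against; you are reconstructing Wilson's argument, which you acknowledge explicitly in your last paragraph. Your outline does follow Wilson's actual route (rank-one factorization of $[X,Y]+I_n$, Baker function, injectivity via reconstruction, identification of the image via the one-point Schubert cell match and the factorization over the support), and your closing remark correctly identifies where the real work is.

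There is, however, a concrete slip that contradicts your own next sentence. You write the Baker function as
$$
\psi_{X,Y}(x,z) = \bigl(1 - w^{\mathsf{T}}(zI_n - X)^{-1}(xI_n - Y)^{-1} v\bigr) e^{xz},
$$
and then assert that the adelic support of the resulting $W_{X,Y}$ is the eigenvalue multiset of $Y$. But the $z$-poles of your rational prefactor sit at the spectrum of $X$, so with this formula the support would be $\sigma(X)$, not $\sigma(Y)$. In this paper's conventions $Y$ must occupy the spectral ($z$) slot: $\pi\colon \CM_n \to \h^*/\s_n$ records the $\GL_n$-orbit of $Y$, diagram (\ref{prop:somefactor}) says $\Supp\circ\beta_n = \pi$, and the $\tau$-function formula (\ref{eq:tausmatch}), $\tau_{(X,Y)} = \det\bigl(X + \sum_i i t_i(-Y)^{i-1}\bigr)$, confirms that $X$ governs the Wronskian zeros while $Y$ governs the exponential frequencies. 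The correct formula is therefore
$$
\psi_{X,Y}(x,z) = \bigl(1 - w^{\mathsf{T}}(zI_n - Y)^{-1}(xI_n - X)^{-1} v\bigr) e^{xz},
$$
i.e.\ you have imported Wilson's $(X,Z)$ notation verbatim without accounting for the fact that this paper's $(X,Y)$ has the roles reversed (Wilson's $X \leftrightarrow$ this paper's $Y$). With that swap made, the rest of your sketch---in particular the factorization over $\Supp$, the degree count via $\tau_W(t_1,0,\dots)$, and the one-point surjectivity via (\ref{eq:zerofiber}) and Wilson's Proposition 6.13---is consistent with what you claim and with what the paper later uses.

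One smaller point: in the surjectivity step you say the one-point surjectivity ``is precisely the cell decomposition (\ref{eq:zerofiber}) matched against the Schubert stratification.'' Having parallel stratifications of source and target is not by itself surjectivity; you also need the cell-to-cell statement $\beta_n(\OmCM_\lambda)=\OmGrad_\lambda$, which is Wilson's Proposition 6.13 (the paper quotes it in the lemma immediately after this theorem). You are implicitly using it; it is worth naming, since it is where the actual content lives.
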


We define $\Supp : \Graad_n \rightarrow \h^* / \s_n$ by 
$$
\Supp ( \{ W_b \}) = \sum_{b \in \C} \deg(W_b) \cdot b,
$$
which, via $i$, may also be considered as a map $\Grad_n \rightarrow \h^* / \s_n$. By Theorem 7.5 of \loccit, the following diagram commutes
\beq{prop:somefactor}
\xymatrix{
\CM_n \ar[rr]^{\beta_n} \ar[dr]_{\pi} & & \Grad_n \ar[dl]^{\Supp} \\
 & \h^* / \s_n &  
}
\eeq

\subsection{Quasi-exponentials}

Recall from the introduction that $\Quasi$ denotes the space of all functions of the form $\sum_{i = 1}^k e^{b_i x} g_i(x)$, where $b_i \in \C$ and $g_i(x) \in \C[x]$. We think of the space $\Quasi$ as being a space of linear functionals on the vector space $\C[z]$ via the pairing 
\beq{eq:pairone}
\langle e^{b x} g(x), f(z) \rangle = e^{b \pa} g(\pa) \cdot f(z) |_{z = 0},
\eeq
where, formally, $e^{b \pa} \cdot z^n = (z + b)^n$. The pairing $\langle - , - \rangle$ satisfies $\< x \cdot c,f \> = \< c,\pa_z f \>$ and $\< \pa_x \cdot c,f \> = \< c, z f \>$. There is also a $\Cs$-action on $\Quasi$ given by $\alpha \cdot x = \alpha x$. The pairing $\langle - , - \rangle$ is $\Cs$-invariant. 

\begin{defn}
A finite dimensional subspace $C$ of $\Quasi$ is said to be a \tit{space of quasi-exponentials}. A quasi-exponential $f \in \Quasi$ is said to be \tit{homogeneous} if $f = e^{b x} g(x)$ for some $b \in \C$ and $g(x) \in \C[x]$. A space of quasi-exponentials $C$ is said to be \tit{homogeneous} if 
$$
C = \bigoplus_{b \in \C} C_b,
$$
where $C_b$ consists entirely of homogeneous quasi-exponentials of the form $e^{b x} g(x)$ for some $b \in \C$ and $g(x) \in \C[x]$.
\end{defn}

\begin{defn}\label{defn:quasiGrass}
The set of all homogeneous spaces of quasi-exponentials is called the \textit{quasi-exponential Grassmannian} and denoted $\QGr$. 
\end{defn}

We have $\QGr = \bigsqcup_{n = 0}^{\infty} \QGr_n$, where $\QGr_n$ is the set of all homogeneous spaces of quasi-exponentials of dimension $n$. We define $\Supp : \QGr_n \rightarrow \C^n / \s_n$ by $\Supp(C) = \sum_{i = 1}^k n_i \cdot b_i$ if $C = \bigoplus_{i = 1}^k C_{b_i}$ with $\dim C_{b_i} = n_i$. As shown in \cite[Proposition 4.6]{WilsonBi}, the spaces of quasi-exponentials are related to the rational Grassmannian as follows. For $C \subset \Quasi$, define 
$$
V_C := \{ f \in \C[z] \, | \, \langle g , f \rangle = 0, \, \forall g \in C \}.
$$ 

\begin{lem}\lab{lem:anequiv}
The subspace $W \subset \C(z)$ belongs to $\tGrat$ if and only if there exists a finite dimensional subspace $C \subset \Quasi$ and polynomial $q$ with $\deg(q) = \dim C$ such that $W = q^{-1} V_C$.
\end{lem}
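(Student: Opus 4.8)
The plan is to deduce both implications from an explicit description of the pairing $\langle-,-\rangle$ together with a double-annihilator argument over a suitable finite-dimensional quotient of $\C[z]$. First I would unwind the defining formula for the pairing to obtain $\langle e^{\beta x}x^j,f(z)\rangle=f^{(j)}(\beta)$; consequently, pairing against a homogeneous quasi-exponential $e^{\beta x}g(x)$ with $\deg g<k$ is a linear combination of the Hermite evaluation functionals $f\mapsto f^{(i)}(\beta)$ for $0\le i<k$, and in particular annihilates $(z-\beta)^k\C[z]$. Since the Hermite functionals attached to distinct points and all orders are linearly independent on $\C[z]$, the pairing is non-degenerate in the $\Quasi$-variable, and for any monic $m(z)=\prod_i(z-\beta_i)^{k_i}$ the subspace $Q_m:=\bigoplus_i e^{\beta_i x}\C[x]_{k_i}\subset\Quasi$ is precisely the annihilator of the ideal $m(z)\C[z]$, so that the induced pairing $Q_m\times\C[z]/m(z)\C[z]\to\C$ is perfect. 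I would also record two routine facts: every finite-dimensional $C\subset\Quasi$ is contained in some $Q_m$ (collect the finitely many exponents and bound the degrees occurring in a basis of $C$), and every finite-codimension subspace of $\C[z]$ containing $m(z)\C[z]$ is recovered from its image in $\C[z]/m(z)\C[z]$.

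Granting this, the implication $W=q^{-1}V_C\Rightarrow W\in\tGrat$ runs as follows. Choose $m$ with $C\subseteq Q_m$, so that $V_C\supseteq m(z)\C[z]$; and since the $\dim C$ functionals $f\mapsto\langle c,f\rangle$, with $c$ ranging over a basis of $C$, are linearly independent by non-degeneracy, $\dim\C[z]/V_C=\dim C=:d=\deg q$. As multiplication by $q(z)^{-1}$ is a $\C$-linear automorphism of $\C(z)$, the space $W=q^{-1}V_C$ satisfies $q(z)^{-1}\C[z]\supseteq W\supseteq q(z)^{-1}m(z)\C[z]\supseteq m(z)\C[z]$ and $\dim q(z)^{-1}\C[z]/W=\dim\C[z]/V_C=d=\deg q$, which are exactly conditions (1) and (2) of Definition \ref{defn:ratGrass} with $a(z)=q(z)$ and $b(z)=m(z)$.

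For the converse, given $W\in\tGrat$ with witnessing polynomials $a,b$, I would set $q:=a$ and $V:=a(z)W\subseteq\C[z]$; then $V\supseteq a(z)b(z)\C[z]=:m(z)\C[z]$ and $\dim\C[z]/V=\dim a^{-1}\C[z]/W=\deg a$. Let $C\subseteq Q_m$ be the annihilator of $V$ under the perfect pairing $Q_m\times\C[z]/m(z)\C[z]\to\C$; then $\dim C=\dim\C[z]/V=\deg q$, and the double-annihilator property, together with $V_C\supseteq m(z)\C[z]$ coming from $C\subseteq Q_m$, forces $V_C=V$. Hence $W=q^{-1}V=q^{-1}V_C$ with $\deg q=\dim C$, as required.

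The only genuinely non-formal step is the first paragraph: establishing $\langle e^{\beta x}x^j,f\rangle=f^{(j)}(\beta)$ directly from the definition of $\langle-,-\rangle$, and the linear independence of Hermite evaluation functionals that makes the pairing $Q_m\times\C[z]/m(z)\C[z]\to\C$ perfect. Everything afterwards is bookkeeping with finite-codimension subspaces, ideals, and double annihilators, relying only on the fact that multiplication by $q^{-1}$ preserves all the relevant dimensions.
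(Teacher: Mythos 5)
Your proof is correct and follows essentially the same route as the paper's: establish the perfect pairing between $Q_m$ (the space of quasi-exponentials annihilating $m(z)\C[z]$) and the finite quotient $\C[z]/m(z)\C[z]$, then deduce both implications by double-annihilator bookkeeping. The paper's forward direction takes $a=q$ and $b=hq$ where $h$ is your $m$, and its converse sets $u=pq$ and takes $C$ to be the annihilator of $qW$, which is exactly your construction with $m=ab$; your version is slightly more explicit about verifying the codimension condition (2) in Definition \ref{defn:ratGrass}, which the paper leaves implicit.
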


\begin{proof}
Fix $C \subset \Quasi$ with $\dim C < \infty$ and $q \in \C[z]$ such that $\deg q = \dim C$. Then there exist $b_1, \ds, b_n \in \C$ and $r_1, \ds, r_n \in \N$ such that $C \subset \Span \{ e^{b_i x} x^{r_i} \}$. The polynomial  
$$
h = \prod_{i = 1}^n (z - b_i)^{r_i + 1}
$$ 
has the property that $\langle e^{b_i x} x^{r_i}, h f \rangle = 0$ for all $1 \le i \le n$ and all $f \in \C[z]$. Therefore, the ideal $h \ \C[z]$ is contained in $V_C$ and hence $hq \ \C[z] \subset V_C$ as well. Thus,
$$
h \ \C[z] = (hq)q^{-1} \C[z] \subset q^{-1} V_C \subset q^{-1} \C[z],
$$
which implies that $q^{-1} V_C \in \tGrat$. To prove the converse, we note that if $u = \prod_{i = 1}^n (z - b_i)^{a_i}$ then
$$
C_u = \{ g \in \Quasi \, | \, \langle g , uf \rangle = 0 \, \forall f \in \C[z] \} = \Span \{ e(a_i - 1,\lambda_i) \}.
$$
In particular, $\dim C_u = \deg u$. The pairing $\< - , - \>$ identifies $C_u = (\C[z] / (u))^*$. Given $p \C[z] \subset W \subset q^{-1} \C[z]$, we have $pq \C[z] \subset q W \subset \C[z]$. Set $u = pq$ and let $C = \{ \mu \, | \, \langle \mu, f \rangle = 0 \, \forall f \in qW \}$. Then, the identification $C_u \simeq (\C[z] / (u))^*$  induces an isomorphism $\Grass_{k}(C_u) \stackrel{\sim}{\rightarrow} \Grass_{\deg u - k}(\C[z] / (u))$, under which $qW = V_C$. Since $\codim_{\C[z]}(qW) = \deg q$, we have 
$$
\dim C = \dim (\overline{qW})^{\perp} = \codim_{\C[z] / (u)} (\overline{qW}) = \codim_{\C[z]}(qW) = \deg q.
$$
\end{proof}

If $C \in \QGr$ is a homogeneous space of quasi-exponentials then define 
$$
q_C(z) = \prod_{b \in \Supp(C)} (z - b)^{n_b}
$$
where $n_b = \dim C_b$. Then $\deg (q_C) = \dim C$ and, by Lemma \ref{lem:anequiv}, $q_C^{-1} V_C$ is a point in $\tGrat$. Write $\gamma : \QGr \ra \tGrat$ for the map $C \mapsto q^{-1}_C V_C$. As explained in \cite[\S 6]{WilsonBi},

\begin{prop}
The image of the map $\gamma$ equals $\Grad$.
\end{prop}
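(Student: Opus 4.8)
The plan is to reduce the statement to a single local-to-global lemma and deduce both inclusions $\Im\gamma \subseteq \Grad$ and $\Grad \subseteq \Im\gamma$ from it. The basic mechanism is a pointwise version of Lemma \ref{lem:anequiv}: for a fixed $b \in \C$, the assignment $D \mapsto (z-b)^{-\dim D}\,V_D$, restricted to the homogeneous spaces of quasi-exponentials supported at $b$ alone, is a bijection onto $\Grass_b$, sending the zero space to $\C[z]$. This is the argument in the proof of Lemma \ref{lem:anequiv} run with the polynomial $u=(z-b)^N$ for $N\gg 0$, together with the identification $\Grass_k(C_u)\simeq \Grass_{\deg u-k}(\C[z]/(u))$ recorded there, after the change of variables $z\mapsto z-b$; one also checks (comparing pole orders with Schubert positions) that the dimension of $D$ equals the degree $\deg(W)$ of the corresponding $W\in\Grass_b$ in the sense of Section \ref{sec:adelic}.

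The key lemma I would prove is: for a homogeneous $C=\bigoplus_{b}C_b\in\QGr$, one has $q_C^{-1}V_C = i(\{W_b\})$, where $W_b\in\Grass_b$ is the point corresponding under the local bijection to $D_b := T_b(C_b)$, $T_b$ being the dual (with respect to the pairing \eqref{eq:pairone}) of multiplication by the unit part $u_b := \prod_{b'\neq b}(z-b')^{\dim C_{b'}}$ of $q_C$ at $b$. To see this, write $q_C=(z-b)^{n_b}u_b$ near $b$ with $n_b=\dim C_b$ and $u_b(b)\neq 0$. Since $V_C=\bigcap_b V_{C_b}$, an $f\in\C(z)$ lies in $q_C^{-1}V_C$ exactly when $f$ has poles only in $\Supp(C)$ and, for every $b$, the function $(z-b)^{n_b}u_b f$ is regular at $b$ and annihilated by $C_b$ under \eqref{eq:pairone}. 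Because multiplication by $u_b$ dualizes to the operator $T_b$, which is invertible and triangular with respect to the degree filtration on quasi-exponentials supported at $b$, the $b$-local condition says precisely that $f$ agrees near $b$ with an element of $(z-b)^{-n_b}V_{D_b}$, i.e. $f$ has prescribed local data $W_b$ at $b$; and $W_b=\C[z]$ for all but finitely many $b$. Matching this with the defining formula $i(\{W_b\})=\bigcap_b\ann_b(W_b^*)$ of Definition \ref{defn:iadelicGrass} — using that $i$ restricts to the naive inclusion on each $\Grass_b$, as noted in Section \ref{sec:adelic} — yields $q_C^{-1}V_C=i(\{W_b\})\in\Grad$, which already gives $\Im\gamma\subseteq\Grad$.

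For the reverse inclusion, take $W=i(\{W_b\})\in\Grad$, let $D_b$ be the homogeneous space supported at $b$ corresponding to $W_b$ (so $D_b=0$ off the support, of dimension $m_b=\deg(W_b)$), set $u_b=\prod_{b'\neq b}(z-b')^{m_{b'}}$, and define $C_b:=T_b^{-1}(D_b)$, $C:=\bigoplus_b C_b\in\QGr$. Then $\dim C_b=m_b$, so $q_C=\prod_b(z-b)^{m_b}$, and the key lemma applied to this $C$ gives $\gamma(C)=q_C^{-1}V_C=i(\{W_b\})=W$; hence $\Grad\subseteq\Im\gamma$. Finally $\deg q_C=\dim C$ identifies the gradings, so in fact $\gamma(\QGr_n)=\Grad_n$. (One could also obtain surjectivity more cheaply from the identification $\QE\rightsim\Grad$ of Section \ref{sec:grasses}, once one observes it is realized by $\gamma$ on canonical spaces; but the construction above is needed for $\Im\gamma\subseteq\Grad$ in any case.)

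The main obstacle is the bookkeeping inside the key lemma: the denominator $q_C$ is a single polynomial that entangles the factors attached to all points of $\Supp(C)$, so passing to the $b$-localization does not return $C_b$ but its twist $T_b(C_b)$ by the unit part of $q_C$, and one must track this twist carefully while simultaneously checking that the resulting family $\{W_b\}$ reassembles to $q_C^{-1}V_C$ under Wilson's intersection-of-annihilators formula for $i$. Once this local-to-global dictionary is in place, everything else — the two inclusions and compatibility with the $\Z_{\ge 0}$-gradings — is formal.
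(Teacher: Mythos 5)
The paper itself gives no proof of this proposition; it simply refers to \cite[\S 6]{WilsonBi}. So you are supplying an independent argument, and the central insight is correct and genuinely needed: for a non-canonical $C$ one cannot recover $\gamma(C)$ from the naive local data $(z-b)^{-n_b}V_{C_b}$, and the twist $D_b = T_b(C_b)$ by the adjoint of multiplication by $u_b=\prod_{b'\ne b}(z-b')^{n_{b'}}$ is exactly the right correction. (Concretely, take $C = \C\{x\}\oplus\C\{e^x\}$ with $q_C=z(z-1)$: then $\gamma(C)=\Span\{1+z^{-1},z,z^2,\dots\}$, which is $z^{-1}V_{D_0}$ for $D_0=(\pa_x-1)(\C x)=\C\{x-1\}$, \emph{not} the untwisted $z^{-1}V_{\C x}=\Span\{z^{-1},z,z^2,\dots\}$.) The two-inclusion architecture — the key lemma gives $\Im\gamma\subseteq\Grad$ immediately, and surjectivity follows by choosing $D_b$ to be the canonical space associated to $W_b$ and inverting the twist — is sound, and is also a more explicit and self-contained route than either the paper's citation or a detour through Proposition \ref{prop:commute120} (which anyway only treats the case $n_b=\deg(W_b)$ and cannot be quoted directly for arbitrary $C$).

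However, several auxiliary claims you make are false and one of your conclusions must be withdrawn. First, $D\mapsto (z-b)^{-\dim D}V_D$ is surjective onto $\Grass_b$ but not a bijection: $D=0$ and $D=\Span\{e^{bx},e^{bx}x,\dots,e^{bx}x^{m-1}\}$ both land on $\C[z]$. Second, the parenthetical "$\dim D=\deg(W)$" is also false in general: for $D=\C\{x^2\}$ one has $\dim D=1$, but $W=z^{-1}V_D=\Span\{z^{-1},1,z^2,z^3,\dots\}$ lies in the Schubert cell of $\lambda=(1,1)$, so $\deg W=2$. Neither of these errors damages the two inclusions (which need only surjectivity, with $D_b$ chosen canonically for the reverse direction), but your closing assertion "$\gamma(\QGr_n)=\Grad_n$" rests on them and is \emph{wrong}: $C=\C\{1,x\}\in\QGr_2$ has $\gamma(C)=\C[z]\in\Grad_0$. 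The correct statement is only $\gamma(\QGr_n)\supseteq\Grad_n$, with equality holding on the canonical piece $\QE_n$ (Proposition \ref{prop:commute120}). Finally, the step where you "match" the collection of $b$-local conditions with $i(\{W_b\})=\bigcap_b\ann_b(W_b^*)$ is stated but not really argued: the citation to "the naive inclusion on $\Grass_b$" is not enough. You need that $\ann_b(W_b^*)$ is precisely the set of rational functions whose finite Laurent jet at $b$ (in the relevant window) agrees with that of some element of $W_b$; this follows from \cite[Lemma 2.5]{Wilson} together with the observation that each constraint $\res_b(f\,w)=0$, $w\in W_b^*$, depends only on a bounded jet of $f$ at $b$, but that deduction should be written out. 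Subject to these corrections I believe the key lemma and the proposition are proved by your argument.
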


Unfortunately, as noted in \cite[\S 6]{WilsonBi}, the set of all homogeneous spaces of quasi-exponentials does not map bijectively onto $\Grad$.

\subsection{Canonical spaces}

For each $W \in \Grad$, there is a canonical choice of a space $C$ in the fiber $\gamma^{-1}(W)$. This choice allows us to define a subset of $\QGr$ such that the restriction of $\gamma$ to this subset is a bijection. 

\begin{defn}\label{defn:Wr}
Let $C \subset \Quasi$ be a homogeneous space of quasi-exponentials and fix a homogeneous basis $e^{b_1 x} g_1(x), \ds , e^{b_n x} g_n(x)$ of $C$. The \tit{Wronskian} of $C$ is defined to be 
\beq{eq:Wr}
\Wr_C (x) := \mathrm{det} \ ( \pa_x^i (e^{b_j x} g_j(x)))_{i,j = 1, \ds n} \cdot e^{- \sum_{i = 1}^n b_i x}.
\eeq
The Wronskian is (up to a scalar) independent of the choice of basis and is a polynomial in $x$. The \tit{degree} of $C$ is defined to be $\deg(C) := \deg(\Wr_C)$ and the space $C$ is said to be \tit{canonical} if $\dim C = \deg (C)$.
\end{defn}

\begin{defn}\label{defn:canonical}
The \textit{canonical Grassmannian} is defined to be the set of all canonical, homogeneous spaces of quasi-exponentials. It is denoted $\QE$. 
\end{defn}

We first show that there is a unique canonical space in $\gamma^{-1}(W)$ for all $W \in \Grass_0 \subset \Grad$. Recall from section \ref{sec:schubertdef1} that we have a partition of $\Grass_0$ into Schubert cells $\OmGr_{\lambda}$. Let $W \in \OmGr_{\lambda}$ and define $S = \{ s_0, s_1, \ds, \}$ as in (\ref{sec:schubertdef1}). We can multiply $W$ by $z^N$ for some $N \ge - s_0 = \lambda_0$ so that $z^N W \subset \C[z]$ and then take the annihilator $C$ of this space in $\Quasi$. 

\begin{lem}\label{lem:Ncan}
Let $W \in \Grass_0$ be of degree $n$ and let $r$ be the smallest positive integer such that $z^r W \subset \C[z]$. For each $N \ge r$ set $C_N = \ann_{\Quasi} z^N W$. Then, $C_n$ is the unique canonical space of quasi-exponentials in the set $\{ C_N \ | \ N \ge r \}$. 
\end{lem}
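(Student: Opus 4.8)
The plan is to reduce the whole statement to a bookkeeping of orders of vanishing at $z = 0$. Write $S = S(W) = \{s_0 < s_1 < \ds\}$ for the order set of $W$, as in section \ref{sec:schubertdef1}, so that $s_i = i$ for $i \gg 0$ and the partition $\lambda$ with $W \in \OmGr_\lambda$ is recovered by $\lambda_i = i - s_i$, with $|\lambda| = n = \deg W$. First I would observe that $z^N W \subset \C[z]$ precisely when $N + s_0 \geq 0$, so that (assuming $n > 0$; the case $n = 0$, where $W = \C[z]$, is immediate) one has $r = -s_0 = \lambda_0$, and in particular $r = \lambda_0 \leq |\lambda| = n$, so that $C_n$ really does occur in the family $\{C_N \mid N \geq r\}$.

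The second step is to identify $C_N = \ann_{\Quasi}(z^N W)$ explicitly. Since $z^N W \subset \C[z]$ has finite codimension and order set $N + S$, a standard echelon-basis and duality argument (the pairing between $\C[z]/z^L\C[z]$ and the polynomials of degree $<L$ being perfect for $L$ large) shows that $C_N$ consists of honest polynomials — so it is homogeneous, supported at $0$, with $q_{C_N} = z^N$ — and that its set of realised degrees is exactly $D_N := \Z_{\geq 0}\setminus(N+S)$; comparing $D_N$ with $N+S$ inside a large interval $\{0,\ds,L-1\}$ simultaneously yields $\dim C_N = |D_N| = N$. (One also gets $V_{C_N} = z^N W$, hence $\gamma(C_N) = W$, though this is not needed for the lemma.)

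The third, and really the only substantive, step is the Wronskian degree. I would invoke the classical fact that a finite-dimensional space of polynomials with realised degrees $d_1 < \ds < d_m$ has $\deg \Wr = \sum_j d_j - \binom{m}{2}$ (take an echelon basis; the leading coefficient of the Wronskian determinant is a nonzero generalised Vandermonde, since the $d_j$ are distinct). Applying this to $D_N$ and summing over the large interval above, using $\sum_i \lambda_i = n$ together with $\binom{L}{2} - \binom{L-N}{2} = NL - \binom{N+1}{2}$, one finds $\sum_{d \in D_N} d = \binom{N}{2} + n$, whence $\deg \Wr_{C_N} = n$, independently of $N$. Therefore $C_N$ is canonical — i.e. $\dim C_N = \deg \Wr_{C_N}$ — if and only if $N = n$, which is the claim.

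The main obstacle is the combinatorial bookkeeping in the second and third steps: one has to match $S$, the partition $\lambda$, and the realised-degree set of $C_N$ with the correct conventions from section \ref{sec:schubertdef1}, and make the manipulations with the infinite set $N + S$ rigorous by truncating at a large $L$ and letting the tails cancel. Everything else — the echelon/duality description of $C_N$ and the Wronskian-degree formula — is routine.
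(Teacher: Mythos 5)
Your proof is correct and follows essentially the same route as the paper's: identify $r=\lambda_0$, observe $C_N$ consists of polynomials with realised-degree set $\Z_{\ge 0}\setminus(N+S)$ of size $N$, apply the echelon-basis Wronskian degree formula $\sum d_j - \binom{N}{2}$, and carry out the same truncation bookkeeping to find $\deg\Wr_{C_N}=n$ independently of $N$, so $C_N$ is canonical precisely when $N=n$. The only cosmetic differences are that you make the Vandermonde leading-coefficient argument and the handling of $n=0$ explicit, whereas the paper leaves both implicit.
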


\begin{proof}
Let $\lambda$ be a partition of $n$ and assume that $W \in \OmGr_\lambda$. Then, $r = \lambda_0$. For any $N \ge \lambda_0$, the space $C_N$ is homogeneous because $z^d \C[z] \subset z^N W$ for some $d$ implies that $C_N$ consists entirely of polynomials in $x$. We claim that $\deg \Wr_{C_N}(x) = n$ for all $N \ge \lambda_0$. By definition, this claim is equivalent to the statement of the lemma. Therefore, we will give a proof of the claim. Let $r_i = s_i + N$ so that 
$$
z^{r_i} + \sum_{j = d_i + 1}^{r_{i+1} - 1} \alpha_{i,j} z^j
$$
is a basis for $z^N W$. We claim that the number of elements in $\N_0 \backslash (S + N)$ is $N$. To see this, consider the set $S + N$ as a collection of beads on $\N$. Moving all beads as far right as possible gives us the set $N + \N$. In doing so this the number of gaps does not change. Then, the claim follows from the obvious fact that $|\N \backslash (N + \N)| = N$. Write $\{ e_0 < e_1 < \ds < e_{N-1} \}$ for $\N \backslash (S + N)$. Then, $C_N$ has a basis given by 
$$
x^{e_i} + \sum_{\stackrel{j = 0}{j \neq e_k, \ \forall k}}^{e_{i} - 1} \beta_{i,j} x^j.
$$
Here, $x$ is the linear functional such that $\< x^k , f \> = \frac{1}{k!} \pa^k (f) |_{z = 0}$ so that $\< x^k , z^l \> = \delta_{k,l}$. Recall that we have chosen $d \gg 0$ such that $z^d \C[z] \subset z^N W$. The degree of the Wronskian of $C_N$ is $\sum_{i = 0}^{N-1} [e_i - (N-1-i)]$. We need to calculate this number. First, note that $\{0,1, \ds, N + d - 1 \} = \{ r_0, \ds, r_{d-1} \} \sqcup \{ e_0, \ds, e_{N-1} \}$ so that
$$
\sum_{i = 0}^{N + d - 1} i = \sum_{j = 0}^{d-1} r_i + \sum_{i = 0}^{N-1} e_i
$$
Hence
$$
\frac{(N+d)(N+d -1)}{2} -  \sum_{j = 0}^{d-1} r_i = \sum_{i = 0}^{N-1} e_i,
$$
equivalently, 
$$
\frac{(N+d)(N+d -1)}{2} -  \left( \sum_{j = 0}^{d-1} j - s_j \right) - \left( \sum_{j = 0}^{d-1} N+ j \right)  = \sum_{i = 0}^{N-1} e_i.
$$
Thus, $\frac{N(N -1)}{2} + n = \sum_{i = 0}^{N-1} e_i$ which implies that $\sum_{i = 0}^{N-1} e_i - (N- i) = n$ as claimed. In fact, one can see that making $N$ larger just makes the tail $\{ x^0, x^1 , x^2, \ds \}$ of the basis of $C_N$ longer and doesn't affect the Wronskian. 
\end{proof}

Using the fact that $\pa_x^k  e^{b x} g(x) = e^{b x} (\pa_x + b)^k g(x)$, one can check that the same argument applies to any space $W \in \Grass_b$. That is, if $W \in \OmGr_{b,\lambda}$ where $\lambda \vdash n$, then $\ann_{\Quasi} (z - b)^n W$ is the unique canonical space in the fiber $\gamma^{-1}(W)$. Therefore, we define $\eta : \Graad \ra \QE$ by 
$$
\eta( \{ W_b \}) = \bigoplus_{b \in \C} \ann_{\Quasi} \left[ (z - b)^{\deg (W_b)} W_b \right].
$$
The map $\eta$ is a bijection. 

\begin{prop}\label{prop:commute120}
The diagram
$$
\xymatrix{
\Graad \ar[rr]^\eta \ar[rd]_{i} & & \QE \ar[ld]^{\gamma} \\
 & \Grad & 
}
$$
is commutative. 
\end{prop}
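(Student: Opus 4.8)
The plan is to unwind both composites into explicit $\C$-subspaces of $\C(z)$ and then to reduce the asserted equality to the case of a tuple supported at a single point, where it has already been recorded. Fix $\{W_b\}\in\Graad$ with support $B$, and for $b\in B$ set $n_b=\deg(W_b)$. By definition $\eta(\{W_b\})=\bigoplus_{b\in B}C_b$ with $C_b=\ann_\Quasi[(z-b)^{n_b}W_b]$, a homogeneous space supported at $b$; since $(z-b)^{n_b}W_b$ has codimension $n_b$ in $\C[z]$ we get $\dim C_b=n_b$, so $q_{\eta(\{W_b\})}=\prod_{b\in B}(z-b)^{n_b}$ and $V_{\eta(\{W_b\})}=\bigcap_{b\in B}V_{C_b}$.

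The first substantive step is the local identity $V_{C_b}=(z-b)^{n_b}W_b$. This is a double-annihilator statement: $(z-b)^{n_b}W_b$ lies between $(z-b)^N\C[z]$ and $\C[z]$ for $N\gg0$, so both $C_b$ and $V_{C_b}$ are visible inside the perfect pairing between $\C[z]/(z-b)^N\C[z]$ and $\ann_\Quasi[(z-b)^N\C[z]]$ supplied by the identification $C_u\simeq(\C[z]/(u))^*$ from the proof of Lemma \ref{lem:anequiv} (with $u=(z-b)^N$), and forming the annihilator twice returns the finite-codimension subspace one began with. Substituting, and using that multiplication by a fixed rational function commutes with intersections,
$$
\gamma(\eta(\{W_b\}))\;=\;\Big(\textstyle\prod_{b\in B}(z-b)^{n_b}\Big)^{-1}\bigcap_{b\in B}(z-b)^{n_b}W_b\;=\;\bigcap_{b\in B}\Big(\textstyle\prod_{b'\in B,\ b'\neq b}(z-b')^{n_{b'}}\Big)^{-1}W_b .
$$

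It remains to identify the right-hand side with $i(\{W_b\})=\bigcap_{b}\ann_b(W_b^*)$. When $B=\{b_0\}$ the expression above collapses to $W_{b_0}$, and $i$ sends the one-point tuple at $b_0$ to $W_{b_0}$ as well: this is the observation, made just after Wilson's Lemma 5.2, that $i$ restricts to the naive inclusion $\Grass_{b_0}\subset\Grat$ on each factor of the restricted product $\Graad=\prod^0_{b\in\C}\Grass_b$. For general $B$ one checks that $\gamma\circ\eta$ and $i$ are compatible with this restricted-product structure in the same manner: for $\gamma\circ\eta$ this is precisely the displayed formula, which reconstructs $\gamma(\eta(\{W_b\}))$ from the values $W_b$ of $\gamma\circ\eta$ on the one-point tuples; for $i$ it is part of Wilson's description of the embedding (the $\Grad$-incarnation of the factorization used for $\CM_n$, cf. (\ref{prop:somefactor}) and (\ref{lem:W71})), and compatibility with $\Supp$ is immediate on both sides. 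The step I expect to be the main obstacle is exactly this last one — checking that Wilson's $i$ assembles the local data $\{W_b\}$ according to the gluing rule read off from $\gamma\circ\eta$ — since it requires careful bookkeeping with the residue pairings $\langle-,-\rangle_b$ and with the duality $W\mapsto W^*=\ann_\infty W$ on the individual $\Grass_b$; the remaining steps merely chase the definitions recalled above.
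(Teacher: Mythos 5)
Your reduction is correct and exactly parallels the paper's: both start from $V_{\eta(\{W_b\})}=\bigcap_b V_{C_b}$ and the identity $V_{C_b}=(z-b)^{n_b}W_b$ (your double-annihilator justification of this local step is sound), and both arrive at needing the identity
$$
\bigcap_{b\in\C}\ann_b(W_b^*)\;=\;\bigcap_{b}\Big(\prod_{b'\neq b}(z-b')^{n_{b'}}\Big)^{-1}W_b.
$$
But this last identity is precisely the substantive content of the proposition, and you do not prove it. You appeal to an unstated ``$\Grad$-incarnation of the factorization used for $\CM_n$'' and frankly flag this as the main obstacle. Wilson's Lemma 7.1 factorization is a statement about $\CM(\bb)$, not about how $i$ glues the local pieces $\ann_b(W_b^*)$, so this appeal does not close the gap.

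The paper resolves the missing step by a direct two-sided containment argument whose key input is Wilson's Lemma 2.5: $W_b$ is exactly the subspace of $\ann_b(W_b^*)$ consisting of functions whose only pole is at $b$. Combined with the multiplicative stability of $\ann_b(-)$ under rational functions regular at $b$, this gives both inclusions: given $f$ in the left-hand side, multiplying by $\prod_{a\neq b}(z-a)^{n_a}$ lands in $\ann_b(W_b^*)$ with no pole away from $b$, hence in $W_b$; conversely, starting from $f\in\bigcap(z-b)^{n_b}W_b$, one clears poles one prime at a time. To complete your argument you would need to supply exactly this computation with the residue pairings; without it, the proof is incomplete.
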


\begin{proof}
Let $W = \{ W_b \} \in \Grad$ with $\eta(\{ W_b \}) = \bigoplus_{b \in \Supp (W)} C_b$. The commutativity of the diagram is the statement
$$
\left( \prod_{b \in \Supp (W)} (z - b)^{-n_b} \right) \ann_{\C[z]} \left( \bigoplus_{b \in \Supp (W)} C_b \right) = \bigcap_{b \in \C} \ann_b (W_b^*).
$$   
Since $\ann_{\C[z]} (C_b) = (z - b)^{n_b} W_b$, we must show that 
$$
\bigcap_{b \in \Supp (W)} (z - b)^{n_b} W_b = \left( \prod_{b \in \Supp (W)} (z - b)^{n_b} \right) \bigcap_{b \in \C} \ann_b (W_b^*).
$$
Since $( z - b)^{n_b} W_b \subset \C[z]$ for all $b \in \Supp (W)$, we may rewrite the above as 
\beq{eq:LHSRHS}
\bigcap_{b \in \C} (z - b)^{n_b} W_b = \left( \prod_{b \in \C} (z - b)^{n_b} \right) \bigcap_{b \in \C} \ann_b (W_b^*),
\eeq
where $n_b = 0$ for $b$ not in the support of $W$. Let LHS refer to the left hand side of equation (\ref{eq:LHSRHS}) and RHS to the right hand side of (\ref{eq:LHSRHS}). We first show that the LHS is contained in the RHS. For all $b \in \C$, we have $W_b \subseteq \ann_b(W_b^*)$. In fact, by \cite[Lemma 2.5]{Wilson}, $W_b$ is the subspace of $\ann_b(W_b^*)$ consisting of all functions whose only pole is at $b$. Let $f$ belong to the LHS. Then $f \in (z - b)^{n_b} W_b$ and hence $(z - b)^{-n_b} f \in \ann_b(W_b^*)$ for all $b$. If we take any function $g \in \ann_b(W_b^*)$ and $h \in \C(z)$ such that $h$ has no pole at $b$, then $gh \in \ann_b(W_b^*)$. This implies that $(\prod_{a \in \C} (z - a)^{-n_a}) f \in \ann_b(W_b^*)$. Hence, 
$$
\left( \prod_{a \in \C} (z - a)^{-n_a} \right) f \in \bigcap_{b \in \C} \ann_b(W_b^*) \quad \Longrightarrow \quad f \in \left( \prod_{b \in \C} (z - b)^{n_b} \right) \bigcap_{b \in \C} \ann_b (W_b^*). 
$$
Thus, LHS is contained in RHS. 

Now assume that $f \in \bigcap_{b \in \C} \ann_b (W_b^*)$. Then, $\left( \prod_{\stackrel{a \in \C}{a \neq b}} (z - a)^{n_a} \right) f$ belongs to $\ann_b (W_b^*)$ and has no poles other than at $b$. Therefore, \cite[Lemma 2.5]{Wilson} implies that $\left( \prod_{\stackrel{a \in \C}{a \neq b}} (z - a)^{n_a} \right) f$ belongs to $W_b$. Hence, 
$$
\left( \prod_{a \in \C} (z - a)^{n_a} \right) f \in (z - b)^{n_b} W_b, \ \forall \ b \in \C \quad \Longrightarrow \quad \left( \prod_{a \in \C} (z - a)^{n_a} \right) f \in \bigcap_{b \in \C} (z - b)^{n_b} W_b.
$$
Thus, RHS is contained in LHS. 
\end{proof}

Proposition \ref{prop:commute120} implies that there is a well-defined bijection $\eta \circ i^{-1} : \Grad \rightarrow \QE$. We will also denote this map by $\eta$.

\subsection{The $\tau$-function}\label{sec:tau} The rational Grassmannian is a subspace of Sato's Grassmannian and therefore plays an important role in the study of the Kadomtsev-Petviashvili (KP) hierarchy. It also means that, via the Boson-Fermion correspondence, we can associate to each $W \in \Grat$ its \tit{$\tau$-function}, which is a rational function in the infinitely many variables\footnote{Often, in the literature, one sets $x = t_1$.} $t_1,t_2, t_3, \ds$
$$
\tau_W(t_1,t_2,t_3, \ds ) \in \C(t_1,t_2,t_3, \ds ).
$$
See \cite{Miwa} for the definition of $\tau_W$. A more geometric definition of the $\tau$-function in terms of a non-vanishing section of the dual of the determinant line bundle on $\Grat$ is given in \cite{WilsonSegalLoops}. One can also define $\tau$-functions on the Calogero-Moser space $\CM_n$ and on the set of all spaces of quasi-exponentials in $\Quasi$ as follows. Let $(X,Y) \in \CM_n$ and define
\beq{eq:tausmatch}
\tau_{(X,Y)}(t_1,t_2,t_3, \ds ) = \mathrm{det} (X + \sum_{i = 1}^\infty i t_i (-Y)^{i-1} ).
\eeq
As shown in section 3.8 of \cite{Wilson}, we have $\tau_{(X,Y)} = \tau_{\beta_n(X,Y)}$.  

Let $C$ be a space of quasi-exponential and fix a basis $\{ c_1, \ds, c_n \}$ of this space. Define
$$
\tau_C^0(t_1,t_2,t_3, \ds ) = \det \left( \left\langle c_i, z^j G(z) \right\rangle \right)_{i,j = 1 \ds n},
$$
where $\< - , - \>$ is the pairing (\ref{eq:pairone}) and $G(z) := \exp \left(\sum_{i = 1}^\infty z^i t_i \right)$. Assume that $\Supp \ C = \sum_{j = 1}^k n_j b_j$ and define 
$$
\tau_C(t_1,t_2,\ds) = \left( \prod_{j = 1}^k \exp \left( - \sum_{i = 1}^{\infty} b_j^i t_i \right)^{n_j} \right) \tau_C^0 (t_1, t_2, \ds).
$$   

\begin{lem}\label{lem:cantau}
For all $C = \eta(W)$ in $\QE$, we have $\tau_W = \tau_C$ and 
\beq{eq:Wrtau}
\Wr_C(x) = \tau_C(x,0,\ds).
\eeq
\end{lem}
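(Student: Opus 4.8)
The plan is to prove the two assertions separately. The identity $\Wr_C(x)=\tau_C(x,0,\ds)$ is a short computation with the pairing $\<-,-\>$; the identity $\tau_W=\tau_C$ is the substantive one, and amounts to recognising the abstractly-defined $\tau$-function of $W\in\Grad$ as the explicit determinant used to define $\tau_C$.

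For the Wronskian identity, the key observation is that pairing a homogeneous quasi-exponential against $e^{zx}$ returns it as an honest function of $x$: one checks directly from (\ref{eq:pairone}) that $\<c,e^{zx}\>=c(x)$ for $c=e^{bx}g(x)$. Setting $t_1=x$ and $t_i=0$ for $i\ge2$, so that $G(z)=e^{zx}$, and using the relation $\<\pa_x c,f\>=\<c,zf\>$ recorded in the text, this gives $\<c_i,z^jG(z)\>=\<\pa_x^j c_i,e^{zx}\>=\pa_x^j\big(c_i(x)\big)$. Hence $\tau_C^0(x,0,\ds)=\det\big(\pa_x^j c_i(x)\big)_{i,j}$ is, up to transpose, the classical Wronskian of $c_1,\ds,c_n$; writing $c_i=e^{b_ix}g_i(x)$ and comparing with Definition \ref{defn:Wr}, it equals $e^{\sum_i b_ix}\,\Wr_C(x)$. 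Since the normalising prefactor in the definition of $\tau_C$ specialises to $\prod_j e^{-n_jb_jx}=e^{-\sum_i b_ix}$, the two exponential factors cancel and $\tau_C(x,0,\ds)=\Wr_C(x)$.

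For $\tau_W=\tau_C$ I would argue as follows. By Lemma \ref{lem:anequiv} and Proposition \ref{prop:commute120}, $W=\gamma(C)=q_C^{-1}V_C$ with $q_C=\prod_b(z-b)^{n_b}$ and $\deg q_C=\dim C=n$, so $V_C=q_CW\subset\C[z]$ has codimension $n$ and the pairing $\<-,-\>$ identifies $C$ with the space of linear functionals on $\C[z]$ cutting out $V_C$, i.e. with $(\C[z]/V_C)^*$. The plan is to show that the $\tau$-function of $W$ admits the Wronskian-type determinantal presentation in which $\tau_W(t)$ is the determinant of a basis of these functionals evaluated on $\{z^j e^{\xi(t,z)}\}_{0\le j\le n-1}$, with $\xi(t,z)=\sum_{i\ge1}t_iz^i$, corrected by the factor $\prod_b e^{-n_b\xi(t,b)}$ accounting for the denominator $q_C^{-1}$ --- and then to observe that, rewriting the functionals as elements $c_i\in C$ via $\<-,-\>$, this presentation is \emph{verbatim} the definition of $\tau_C(t)=\prod_b e^{-n_b\xi(t,b)}\det(\<c_i,z^jG(z)\>)$. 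Canonicity of $C$ enters at exactly this point: it is what forces $\deg q_C=\deg W$, so that the correction factor is accounted for exactly once (the formula genuinely fails for non-canonical $C$, e.g. $C=\Span\{e^{b_1x},e^{b_2x}\}$).

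The hard part will be establishing that determinantal presentation of $\tau_W$ \emph{with the right normalisation}, i.e. identifying the $\tau$-function coming from the Boson--Fermion correspondence (or the determinant line bundle section of \cite{WilsonSegalLoops}) with the concrete $\det(\<c_i,z^jG(z)\>)$ on the nose, not merely up to a nonzero scalar; note one cannot simply evaluate at $t=0$, since $\{1,z,\ds,z^{n-1}\}$ need not project to a basis of $\C[z]/V_C$. The natural device is to present both sides as Plücker-type determinants relative to \emph{compatible} bases --- $\tau_W$ from a basis of $W$ (equivalently of $V_C=q_CW$) tested against $\{z^je^{\xi(t,z)}\}$, $\tau_C^0$ from the $\<-,-\>$-dual basis of $C\cong(\C[z]/V_C)^*$ --- so that the pairing intertwines the two and the determinants literally coincide. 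A more hands-on alternative, closer to the methods of the rest of the paper, is to route the argument through the identity $\tau_{(X,Y)}=\tau_{\beta_n(X,Y)}$ recalled in the text: granting the known dictionary (\cite{Wilson,WilsonBi}) expressing the canonical quasi-exponential space $C=\eta(\beta_n(X,Y))$ in terms of a Calogero--Moser datum $(X,Y)$, the desired equality becomes the determinant identity $\det(\<c_i,z^jG(z)\>)\,\prod_b e^{-n_b\xi(t,b)}=\det\big(X+\sum_{i\ge1}it_i(-Y)^{i-1}\big)$, which should follow from linear algebra using $\rk([X,Y]+I_n)=1$.
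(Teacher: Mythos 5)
Your treatment of the Wronskian identity is correct and actually fills in more detail than the paper, which only asserts that the equality $\Wr_C(x)=\tau_C(x,0,\ds)$ is ``evident'' after the substitution $t_1=x$, $t_i=0$ for $i\ge2$. Your observation that $\<c,e^{zx}\>=c(x)$ for a homogeneous quasi-exponential $c$, combined with $\<\pa_x c,f\>=\<c,zf\>$, yields $\<c_i,z^jG(z)\>|_{t=(x,0,\ds)}=\pa_x^j c_i(x)$, whence $\tau_C^0(x,0,\ds)=e^{\sum_j n_jb_jx}\Wr_C(x)$ and the prefactor $\prod_j e^{-n_jb_jx}$ cancels it --- that is the intended argument, and you have executed it.

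The gap is in the first assertion $\tau_W=\tau_C$, which you correctly flag as ``the substantive one'' and ``the hard part'' but then only outline: neither the Pl\"ucker-determinant comparison nor the $\det\bigl(X+\sum_i it_i(-Y)^{i-1}\bigr)$ computation is carried out, and both would require work you have not supplied. The paper's route is considerably shorter and genuinely different from either of your two proposals: it cites $(5.7)$ of \cite{WilsonBi}, which already establishes $\tau_W=\tau_C^0$ in the special case $\Supp C=n\cdot 0$ (where the prefactor is trivially $1$), and then deduces the general case from the transformation law for $\tau$-functions under the $\Gamma_-$-action on the Grassmannian, namely \cite[Lemma 3.8]{WilsonSegalLoops}. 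Concretely, after the substitution $-it_i=p_i$ one sets $\tilde g=\prod_j(1-b_jz^{-1})^{-n_j}$, and the cocycle $\exp S(\tilde f,f)=\exp\bigl(\sum_i p_i(\bb)p_i/i\bigr)$ governing the change of $\tau$ under multiplication by $\tilde g$ is exactly the normalising prefactor $\prod_j\exp(-\sum_i b_j^it_i)^{n_j}$ appearing in the definition of $\tau_C$. Your proposed route (b) via $\tau_{(X,Y)}=\tau_{\beta_n(X,Y)}$ and a Calogero--Moser determinant identity is plausible but would additionally require the explicit dictionary $C=\eta(\beta_n(X,Y))\leftrightarrow(X,Y)$, which is not assembled in the paper and whose verification would be more involved than the loop-group argument. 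In short: the Wronskian half is complete, but for $\tau_W=\tau_C$ you would need to either close one of your two sketches or, more efficiently, import the two cited facts from \cite{WilsonBi} and \cite{WilsonSegalLoops} as the paper does.
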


\begin{proof}
As shown in \cite[(5.7)]{WilsonBi}, if $\Supp C = n \cdot 0$ then $\tau_W = \tau_C^0$, which obviously is the same as $\tau_C$. The general formula will follow from \cite[Lemma 3.8]{WilsonSegalLoops}, for which we need to use the language of symmetric functions. Let $\Lambda$ be the ring of symmetric functions and denote by $p_i$, resp. $h_i,e_i$, the $i$th power, resp. complete symmetric and elementary symmetric, function in $\Lambda$. If we proclaim (see \cite[Proposition 8.2]{WilsonSegalLoops}) that  
$$
G(z)^{-1} = 1 + \sum_{i = 1}^{\infty} h_i z^i := H(z),
$$
then this forces $-it_i = p_i$, which is a consequence of the identity
$$
\exp \left( \sum_{i = 1}^{\infty} \frac{1}{i} \sum_{j = 1}^\infty t_j^i z^i \right) = \prod_{j \ge 1} \exp \left( \sum_{i = 1}^{\infty} \frac{1}{i} (t_j z)^i \right) = \prod_{j \ge 1} \frac{1}{(1 - t_j z)} = H(z).
$$
Then, $G(z)$ equals $\prod_{j \ge 1} \exp \left( - \sum_{i = 1}^{\infty} \frac{1}{i} (t_j z)^i \right) = E(-z)$, where 
$$
E(z) = 1 + \sum_{i = 1}^\infty e_i z^i = \prod_{i \ge 1} (1 + t_i z)
$$
is the generating function for the elementary symmetric functions. Set $g := G(z)$ and let 
$$
\tilde{g} = \prod_{j = 1}^k \frac{1}{(1 - b_j z^{-1})^{n_j}}.
$$
If we define $f$ and $\tilde{f}$ by $g = \exp(f)$ and $\tilde{g} = \exp(\tilde{f})$, then
$$
\tilde{f} = \sum_{i = 1}^\infty \frac{p_i(\bb)}{i} z^{-i}, \quad f = - \sum_{i = 1}^\infty \frac{p_i}{i} z^{i},
$$
where $p_i(\bb) = p_i(b_1, \ds, b_1, b_2, \ds, b_2, b_3, \ds, b_k, 0, \ds, )$ with $b_j$ occurring $n_j$ times. Then, 
$$
S(\tilde{f},f) := \frac{1}{2 \pi i} \int_{S^1} \tilde{f}'(z) f(z) dz = \sum_{i = 1}^\infty \frac{p_i(\bb)}{i} p_i.
$$
Lemma 3.8 of \cite{WilsonSegalLoops} says that, after making the substitution $-i t_i = p_i$, we have $\tau_C = \exp(S(\tilde{f},f)) \tau_C^0$. Since 
$$
\exp \left( - \sum_{i = 1}^\infty p_i(\bb) t_i \right) = \prod_{j = 1}^k \exp \left( - \sum_{i = 1}^{\infty} b_j^i t_i \right)^{n_j}, 
$$
the claim $\tau_W = \tau_C$ follows. 

Recall the definition of $\Wr_C(x)$ as given in (\ref{eq:Wr}). If one makes the substitution $t_1 = x$, $t_2 = t_3 = \ds = 0$ into $\tau_C$ then the equality (\ref{eq:Wrtau}) is evident. 
\end{proof}

\section{Schubert Cells}\label{sec:cells}

\subsection{} At various stages, we will define ``Schubert cells'' in each of the infinite Grassmannian introduced in the previous section. The notation used to denote these cells depends on which Grassmannian they sit inside, namely
$$
\OmCh_{\bullet} \subset \cX_n, \quad \OmCM_{\bullet} \subset \CM_n, \quad \OmGrad_{\bullet} \subset \Grad, \quad \OmGr_{\bullet} \subset \QGr. 
$$ 
We begin by considering the spaces $W \in \Grass_0 \subset \Grad$ i.e. those spaces $W \in \Grad$ such that $\Supp (W) = \deg(W) \cdot 0$. If $W \in \Grass_0$ then there exists an integer $k$ such that $z^{-k} \C[z] \supset W \supset z^k \C[z]$. Therefore, we can chose a basis 
\beq{eq:admisible}
\left\{ z^{s_i} + \sum_{j = s_i + 1}^{s_{i+1} - 1} \alpha_{i,j} z^j \ | \ i \in \N \right\}
\eeq
of $W$ such that $s_i = i$ for $i \gg 0$. As in \cite[\S 3]{WilsonSegalLoops}, a basis $\{ w_i \}_{i \in \N_0}$ of $W$ is said to be \textit{admissible} if $w_i = z^{i}$ for $i \gg 0$. The set (\ref{eq:admisible}) is an admissible basis. If we associate to each $w_i$, the degree $s_i$ of the trailing term of $w_i$, then we get a set $S_W = \{ s_0, s_1, \ds \}$.  The set $S$ satisfies $s_i = i$ for $i \gg 0$ and each such set corresponds to a partition $\lambda$, defined by $\lambda_i = i - s_i$ so that $\lambda_0 \ge \lambda_1 \ge \ds $ and $\lambda_i = 0$ for $i \gg 0$. The $\Cs$-fixed points in $\Grass_0$ of the action defined in section \ref{sec:adelic} are 
$$
W_\lambda = \Span \{ z^{s} \ | \ s \in S \}
$$
where $S$ is the set corresponding to $\lambda$. Then,
$$
\Grass_0 = \bigsqcup_{\lambda \in \mc{P}} \OmGrad_\lambda
$$
where $\OmGrad_\lambda = \{ W \in \Grass_0 \ | \ \lim_{\alpha \rightarrow \infty } \alpha \cdot W = W_\lambda \}$ is a Schubert cell in $\Grass_0$. It is the set of all spaces $W$ such that $S_W = \lambda$. 

For $b \in \C$, let $t_b : \C(z) \rightarrow \C(z)$ be the automorphism $z \mapsto z - b$. Then, $t_b$ defines an isomorphism $\Grass_0 \rightsim \Grass_{b}$ and we set $\OmGrad_{b,\lambda} = t_b(\OmGrad_{\lambda})$. Now let $\bb = \sum_{i = 1}^k n_i b_i \in \h^* / \s_n$ and $\blambda = (\lambda^{(1)}, \ds, \lambda^{(k)})$ a multipartition of $n$ such that $\lambda^{(i)} \vdash n_i$. We define 
$$
\OmGrad_{\bb,\blambda} = \left\{ \ i( \{ W_{b_i} \}) \ | \ \{ W_{b_i} \} \in \prod_{i = 1}^k  \OmGrad_{b_i,\lambda^{(i)}} \right\} . 
$$

\begin{lem}
For each $\bb \in \C^n / \s_n$ and $\blambda$ multipartition of type $\bb$, we have $\beta_n (\OmCM_{\bb,\blambda}) = \OmGrad_{\bb,\blambda}$.
\end{lem}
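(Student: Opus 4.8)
The plan is to isolate the base case $\bb = n\cdot 0$ and to recover the general case from it by translating the support and by invoking Wilson's factorization. The two features I use throughout are that $\beta_n$ is an isomorphism onto $\Grad_n$ intertwining the support maps (diagram \eqref{prop:somefactor}), and that $\beta_n$ is $\Cs$-equivariant for the action $\alpha\cdot z = \alpha^{-1}z$ on $\C(z)$; the latter follows from the identity $\tau_{(X,Y)} = \tau_{\beta_n(X,Y)}$ of section \ref{sec:tau}, since rescaling $(X,Y)$ and rescaling $z$ have the same effect on the $\tau$-function (both rescale the time variable $t_i$ by $\alpha^i$, up to an overall scalar).

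Consider first $\bb = n\cdot 0$. By \eqref{prop:somefactor}, $\beta_n$ restricts to an isomorphism of affine varieties $\CM(n\cdot 0) \rightsim \Grass_0\cap\Grad_n$. Both sides are finite disjoint unions indexed by $\lambda\vdash n$: the left one by \eqref{eq:zerofiber}, and the right one because $\Grass_0 = \bigsqcup_\lambda\OmGrad_\lambda$ with every $W\in\OmGrad_\lambda$ of degree $|\lambda|$, so that $\Grass_0\cap\Grad_n = \bigsqcup_{\lambda\vdash n}\OmGrad_\lambda$. As an equivariant isomorphism, $\beta_n$ carries the attracting set (for $\alpha\to\infty$) of a $\Cs$-fixed point onto the attracting set of its image; since $\OmCM_\lambda$ and $\OmGrad_\lambda$ are by definition the attracting sets of $\mbf X_\lambda$ and $W_\lambda$, and $\beta_n(\mbf X_\lambda)$ is again a $\Cs$-fixed point, hence of the form $W_{\sigma(\lambda)}$ for some partition $\sigma(\lambda)$ of $n$, we obtain $\beta_n(\OmCM_\lambda) = \OmGrad_{\sigma(\lambda)}$ with $\sigma$ a permutation of $\{\lambda\vdash n\}$. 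It remains to check that $\sigma$ is the identity, i.e. $\beta_n(\mbf X_\lambda) = W_\lambda$. I would prove this by comparing $\tau$-functions: with $\mbf X_\lambda = (X_\lambda, Y_\lambda)$ the explicit fixed point of section \ref{sec:CMfxed}, one has $\tau_{\beta_n(\mbf X_\lambda)} = \tau_{\mbf X_\lambda} = \det\bigl(X_\lambda + \sum_{i\geq 1} i\, t_i\,(-Y_\lambda)^{i-1}\bigr)$, and a direct determinant computation identifies this with the Schur polynomial attached to $\lambda$ in the standard time variables, which is precisely the $\tau$-function of the monomial fixed point $W_\lambda\in\Grass_0$ by the Boson--Fermion correspondence; hence $\sigma(\lambda) = \lambda$. (Alternatively, the equality $\beta_n(\mbf X_\lambda) = W_\lambda$ can be extracted directly from Wilson's explicit representatives, \cite[\S 6]{Wilson}.)

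Next, for $\bb = n\cdot b$ with $b\in\C$: the automorphism $(X,Y)\mapsto(X, Y + bI_n)$ of $\CM_n$ identifies $\CM(n\cdot 0)$ with $\CM(n\cdot b)$ and, by definition, carries $\OmCM_\lambda$ onto $\OmCM_{b,\lambda}$; under $\beta_n$ this automorphism corresponds to the substitution $t_b : z\mapsto z - b$ on $\Grad$, as is consistent with \eqref{prop:somefactor} (which identifies $\Supp(\beta_n(X,Y))$ with the spectrum of $Y$) and is a routine check from Wilson's formula for $\beta_n$. Hence $\beta_n(\OmCM_{b,\lambda}) = t_b(\OmGrad_\lambda) = \OmGrad_{b,\lambda}$. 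Finally, for general $\bb = \sum_{j=1}^k n_j b_j$, Wilson's factorization $\alpha_{\bb} : \CM(\bb)\rightsim\prod_{j=1}^k\CM(n_j\cdot b_j)$ of Lemma \ref{lem:W71} is compatible, through the gluing map $i$ of \eqref{eq:adelicembed} (whose restriction to each $\Grass_{b_j}$ is the naive inclusion) and the restricted-product structure of the Adelic Grassmannian, with $\prod_j\beta_{n_j}$; this compatibility is implicit in Wilson's construction of $\beta_n$ and of the support map, \cite[\S\S 6--7]{Wilson}. Combining this with the two previous cases and the definition of $\OmGrad_{\bb,\blambda}$ gives
\begin{align*}
\beta_n(\OmCM_{\bb,\blambda})
&= \beta_n\Bigl(\alpha_{\bb}^{-1}\bigl(\prod_{j=1}^k\OmCM_{b_j,\lambda^{(j)}}\bigr)\Bigr) = i\Bigl(\prod_{j=1}^k \beta_{n_j}(\OmCM_{b_j,\lambda^{(j)}})\Bigr) \\
&= i\Bigl(\prod_{j=1}^k \OmGrad_{b_j,\lambda^{(j)}}\Bigr) = \OmGrad_{\bb,\blambda}.
\end{align*}

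The main obstacle is the identification of the labelled torus-fixed points, $\beta_n(\mbf X_\lambda) = W_\lambda$; everything else is bookkeeping with the support maps and with the factorizations. If one prefers not to quote \cite{Wilson} for this fixed-point matching and for the translation and factorization compatibilities of $\beta_n$, all of these reduce either to unwinding Wilson's (somewhat technical) construction of the embedding $\beta_n$ or to short manipulations of the associated $\tau$-functions.
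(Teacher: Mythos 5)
Your proof follows the same route as the paper's: use the factorization diagram to reduce to a single orbit at the origin, invoke $\Cs$-equivariance to reduce to matching the torus-fixed points $\mbf{X}_\lambda \leftrightarrow W_\lambda$, and then settle that matching by appeal to Wilson (the paper cites \cite[Proposition 6.13]{Wilson} directly, which is exactly your fallback option). You spell out the translation and factorization compatibilities of $\beta_n$ in more detail than the paper's one-line reduction, and you sketch a self-contained $\tau$-function argument for $\beta_n(\mbf{X}_\lambda)=W_\lambda$ as an alternative to the citation, but the overall structure and all key steps coincide with the paper's.
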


\begin{proof}
The diagram (\ref{prop:somefactor}) implies that it suffices to show that $\beta_n (\OmCM_{\lambda}) = \OmGrad_{\lambda}$. Since $\beta_n$ is $\Cs$-equivariant and both $\OmCM_{\lambda}$ and $\OmGrad_{\lambda}$ are defined to be attracting sets for the $\Cs$-action, it suffices to show that $\beta_n(\mbf{X}_{\lambda}) = W_{\lambda}$. This is shown in \cite[Proposition 6.13]{Wilson}. 
\end{proof}

\subsection{} Next we define Schubert cells in the quasi-exponential Grassmannian. We begin with the standard definition of Schubert cells in $\Grass_n(\C[x]_{2n}) \subset \QE$, where $\C[x]_{2n}$ denote the space of all polynomials in $\C[x]$ of degree less than $2n$, as given in \cite[page 147]{FultonYOungTableaux}. Let  
$$
\mc{F} = \{ 0 = \mc{F}_0 \subset \mc{F}_1 \subset \cdots \subset \mc{F}_{2n} = \C[x]_{2n} \}
$$ 
be a complete flag in $\C[x]_{2n}$. Then, given a partition $\lambda = (\lambda_0,\ds, \lambda_{n-1})$ with at most $n$ parts such that $\lambda_0 \le n$, the Schubert cell $\Omega_\lambda(\mc{F}) \subset \Grass_n(\C[x]_{2n})$ is given by 
\begin{align*}
\Omega_\lambda(\mc{F}) = \{ V \in \Grass_n(\C[x]_{2n}) \ | \ \dim (V \cap F_{k}) = i \ & \textrm{ for } n + i - \lambda_{i-1} \le k \le n + i - \lambda_{i} \\
 &  \textrm{ and all } \ 0 \le i \le n \},
\end{align*}
where the condition for $i = 0$ is $\dim (V \cap F_{n - \lambda_0}) = 0$. Then, $\dim \Omega_\lambda(\mc{F}) = n^2 - |\lambda|$. The flag at infinity is 
$$
\mc{F}(\infty) = \{ 0 \subset \C[x]_1 \subset \C[x]_2 \subset \cdots \subset \C[x]_{2n} \}.
$$
A partition $\lambda$ with at most $n$ parts such that $\lambda_0 \le n$ is precisely the same as a partition that fits into an $n \times n$ square. The compliment of $\lambda$ in this square is the rotation by $\pi$ of another partition, denoted $\overline{\lambda}$. It is the unique partition such that $\lambda_i + \overline{\lambda}_{n-i-1} = n$ for all $i = 0,1,\ds ,n-1$. For each partition $\lambda$ of $n$, we define $\OmGr_\lambda := \Omega_{\overline{\lambda}}(\mc{F}(\infty))$. It is $n$-dimensional. The $\Cs$-fixed point in $\OmGr_\lambda$ has basis $\{ x^{d_i} \ | \ i = 0, \ds, n-1 \}$, where $d_i = n + \lambda_i - (i+1)$. 

\begin{lem}\label{lem:schubertmatch}
Let $\lambda$ be a partition of $n$, then $\eta(\OmGrad_\lambda) = \OmGr_{\lambda^t}$.
\end{lem}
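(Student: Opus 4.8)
The plan is to reduce the lemma to a computation of ``leading-exponent sets'', performed in the special case $\bb=n\cdot 0$, where all the spaces involved live in a finite-dimensional Grassmannian. First I would record that on the locus $\Grass_0\cap\Grad_n=\bigsqcup_{\lambda\vdash n}\OmGrad_\lambda$ the map $\eta$ is given simply by $\eta(W)=\ann_\Quasi(z^nW)$: this is Lemma~\ref{lem:Ncan} with the choice $N=n$, which is permissible since every $W\in\OmGrad_\lambda$ has $r=\lambda_0\le n$ (as $\lambda\vdash n$).

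The inclusion $\eta(\OmGrad_\lambda)\subseteq\OmGr_{\lambda^t}$ I would prove by direct calculation. Fix $W\in\OmGrad_\lambda$; its trailing-term set $S=\{i-\lambda_i:i\ge0\}$ depends only on $\lambda$, and rerunning the argument in the proof of Lemma~\ref{lem:Ncan} (with $N=n$) exhibits a basis of $\eta(W)$ in echelon form $x^{e_i}+(\text{lower order})$, $0\le i\le n-1$, whose set of leading exponents is $\{e_0<\cdots<e_{n-1}\}=\N_0\setminus(S+n)$; in particular $\eta(W)\subset\C[x]_{2n}$. The combinatorial core is the Maya-diagram duality between a partition and its transpose, in the form $\Z\setminus S=\{\lambda^t_j-j-1:j\ge0\}$. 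Since $\lambda_0\le n$ we have $S+n\subseteq\N_0$, so intersecting with $\N_0$ gives
\[
\N_0\setminus(S+n)=\{\,n+\lambda^t_i-(i+1)\ :\ 0\le i\le n-1\,\}.
\]
Then I would unwind the definition of $\OmGr_{\lambda^t}=\Omega_{\overline{\lambda^t}}(\mc F(\infty))$, together with the complement operation $\overline{(-)}$ inside the $n\times n$ box, to see that a plane $V\in\Grass_n(\C[x]_{2n})$ lies in $\OmGr_{\lambda^t}$ precisely when its set of leading exponents equals this same set --- equivalently, when $\lim_{\alpha\to\infty}\alpha\cdot V$ is the fixed point $\Span\{x^{d_i}\}$, $d_i=n+\lambda^t_i-(i+1)$, recorded in the text. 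This yields the inclusion.

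For the reverse inclusion I would argue by counting. The map $\eta\colon\Grad_n\iso\QE_n$ is a bijection, and it preserves supports: a component of $\eta(\{W_b\})$ indexed by $b$ with $W_b=\C[z]$ vanishes, while for $b$ in the support the component $\ann_\Quasi[(z-b)^{n_b}W_b]$ consists of functions $e^{bx}g(x)$ and has dimension $n_b=\deg(W_b)$. Hence $\eta$ restricts to a bijection $\Grass_0\cap\Grad_n\iso\{\,C\in\QE_n:\Supp(C)=n\cdot0\,\}$. Using the formula $\deg\Wr_V=\sum_i e_i-\binom{n}{2}$ for the Wronskian degree in terms of leading exponents (as in the proof of Lemma~\ref{lem:Ncan}), one computes $\deg\Wr_V=|\mu|$ for $V\in\OmGr_\mu$, so a polynomial $n$-plane in $\C[x]_{2n}$ is canonical exactly when it lies in some $\OmGr_\mu$ with $\mu\vdash n$; therefore $\{\,C\in\QE_n:\Supp(C)=n\cdot0\,\}=\bigsqcup_{\mu\vdash n}\OmGr_\mu$. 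Now $\eta$ is a bijection between $\bigsqcup_{\lambda\vdash n}\OmGrad_\lambda$ and $\bigsqcup_{\mu\vdash n}\OmGr_\mu$ which, by the previous paragraph, carries the block $\OmGrad_\lambda$ into the block $\OmGr_{\lambda^t}$; since $\lambda\mapsto\lambda^t$ permutes the index set and the blocks are pairwise disjoint on each side, each $\OmGrad_\lambda$ is necessarily carried onto all of $\OmGr_{\lambda^t}$, which is the assertion.

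I expect the main obstacle to be the index bookkeeping in the second step: getting the Maya-diagram/transpose identity right with this paper's conventions ($0$-indexed partitions, trailing-term rather than leading-term sets, the complement $\overline{(-)}$ in the $n\times n$ box) and verifying that the leading-exponent description of $\OmGr_\mu$ matches its definition via the flag $\mc F(\infty)$. Once these are settled the surjectivity step is formal, resting only on $\eta$ preserving supports and on the identification of the canonical polynomial $n$-planes supported at $0$ with $\bigsqcup_{\mu\vdash n}\OmGr_\mu$.
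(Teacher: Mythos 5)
Your proof is correct and follows essentially the same route as the paper's: apply Lemma~\ref{lem:Ncan} with $N=n$ to reduce $\eta$ to $W\mapsto\ann_\Quasi(z^nW)$, compute the leading-exponent set of $\eta(W)$, and match it against the $\Cs$-fixed point of $\OmGr_{\lambda^t}$ via the Maya-diagram duality $\Z\setminus S_\lambda=\{\lambda^t_j-j-1 : j\ge 0\}$. Your explicit bijection-plus-pigeonhole argument for the reverse inclusion supplies a surjectivity step the paper's proof leaves implicit, and your form of the transpose identity is the precise version of what the paper records somewhat loosely as ``$\Z = S_\lambda \sqcup -S_{\lambda^t}$'' (the correct shift being $\Z = S_\lambda \sqcup (-1 - S_{\lambda^t})$).
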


\begin{proof}
The proof of Lemma \ref{lem:Ncan} shows that the map $\eta : W \mapsto \ann_{\Quasi}(z^n W)$ sends the Schubert cell $\OmGrad_\lambda$ to the set $U_\lambda$ consisting of all spaces in $\Grass_n(\C[x]_{2n}) \subset \QE$ with basis 
\beq{eq:basis}
\left\{ x^{e_i} + \sum_{\stackrel{j = 0}{j \neq e_k}}^{e_{i} - 1} \beta_{i,j} x^j \ | \ 0 \le i \le n- 1 \right\}.
\eeq
Note that $\dim U_\lambda = \sum_{i = 0}^{n-1} (e_i - i) = n$. If $V \in \Grass_n(\C[x]_{2n})$ has a basis as in (\ref{eq:basis}) then $\dim (V \cap \C[x]_k) = \# \{ i \ | \ e_i < i \}$, which equals $j$ say if and only if $e_{j-1} < k \le e_{j}$. Therefore $U_\lambda = \Omega_{\overline{\mu}}(\mc{F}(\infty))$ where $\overline{\mu}$ is the partition given by $e_j = n + j - \overline{\mu}_j$. Equivalently, $e_{n-j-1} = 2n -(j+1) - \overline{\mu}_{n-j-1}$. Since $\overline{\mu}$ is defined by $\mu_j + \overline{\mu}_{n-j-1} = n$, we see that $e_{n - j - 1} = n + \mu_j - (j+1)$. Thus, from the definition of $\{ r_i \}$ and $\{ e_i \}$ given in the proof of Lemma \ref{lem:Ncan}, it follows that $\mu$ is the (unique) partition of $n$ such that
$$
\{0,1, \ds, 2n-1 \} = \{ n+ i - \lambda_i \ | \ 0 \le i \le n-1 \} \sqcup \{ n + \mu_j -(j+1) \ | \ 0 \le i \le n-1 \}.
$$
One can deduce that this implies that $\mu = \lambda^t$ from the fact that $\Z = S_\lambda \sqcup - S_{\lambda^t}$, which is easily checked.
\end{proof}

We fix coordinates on the Schubert cell $\OmGr_{\lambda}$ by fixing basis  
$$
f_i(x) = x^{e_i} + a_{i,1} z^{e_i - 1} + \ds + a_{i,0}, \quad \forall \ i = 0,1,\ds, n-1
$$
where $e_i = n + \lambda_i - (i+1)$ and $a_{i,j} \equiv 0$ if $e_{i} - j \in \{ e_{i +1}, \ds, e_{n-1} \}$, for each $C \in \OmGr_{\lambda}$. Then, $\C[\OmGr_{\lambda}]$ is a polynomial ring in the $a_{i,j}$. Let $\bb = \sum_{i = 1}^k n_i b_i \in \h^* / \s_n$ and $\blambda = (\lambda^{(1)}, \ds, \lambda^{(k)})$ a multipartition of $n$ such that $\lambda^{(i)} \vdash n_i$. Inside $\QGr$ we have the product of Grassmannians 
$$
\Grass_{\bb}(\QGr) = \Grass_{n_1} \left( e^{b_1 x} \C[x]_{2n_1} \right) \times \cdots \times \Grass_{n_k} \left( e^{b_k x} \C[x]_{2n_1} \right).
$$
As usual, we define $\OmGr_{\bb, \blambda}$ to be the product $\OmGr_{b_1,\lambda^{(1)}} \times \cdots \times \OmGr_{b_k,\lambda^{(k)}}$ in $\Grass_{\bb}(\QGr)$. The set $\Grass_{\bb}(\QGr)$ has a natural scheme structure, such that $\OmGr_{\bb, \blambda}$ is a locally closed subvariety. Moreover, 
$$
\Grass_{\bb}(\QGr) \cap \QE = \bigsqcup_{\blambda \vdash \bb} \OmGr_{\bb, \blambda}. 
$$

\subsection{} Let $W \in \Grass_0$ and fix some admissible basis $\{ w_i \}_{i \in \N_0}$ of $W$. The admissible basis may be thought of as a $\Z \times \N$ matrix, where the columns are the vectors $w_i$. Then, $W \in \Grass_n(z^{-n} \C[z] / z^n \C[z])$ if $w_i = z^{i-1}$ for all $i > n$. The corresponding matrix has the form
$$
\left( \begin{array}{ccc:ccc}
 & \vdots & & & \vdots & \\
\cdots & 0 & \cdots & \cdots & 0 & \cdots \\
 & \vdots & & & \vdots & \\
\hdashline
w_{1,-n} & \cdots & w_{n,-n} & & \vdots & \\
\vdots & \ddots & \vdots & \cdots & 0 & \cdots  \\
w_{1,n-1} & \cdots & w_{n,n-1} & & \vdots & \\
\hdashline
 & \vdots & & 1 & 0 & \\
\cdots & 0 & \cdots & 0 & \ddots & \ddots \\
 & \vdots & & & \ddots & 1 
\end{array} \right)  
$$
For each $\lambda = S \in \mc{P}$, the determinant $w^{\lambda} := w^S = \det(w_{i,j})_{i \in S, j \in \N}$ is well-defined. Also, if any $s_k \ge n$ for $k < n$ then $w^S = 0$, since the $k$th column of $(w_{i,j})_{i \in S,j \in \N}$ is the zero vector. Therefore, we may assume that $\{ s_0, \ds, s_{n-1} \}$ is a subset of the interval $[-n,n-1]$. Thus, there are ${2n \choose n}$ such $S$. Since these determinants depend, up to a scalar, on a choice of admissible basis, this means that we have defined a map $\Grass_n(z^{-n} \C[z] / z^n \C[z]) \to \mathbb{P}^{ {2n \choose n} -1}$. This is nothing but the classical Pl\"ucker embedding. In terms of partitions, each coordinate of $\mathbb{P}^{ {2n \choose n} -1}$ is labeled by a partition of length at most $n$ such that $\lambda_0 \le n$ i.e. all partitions that fit into a square of length $n$. The Pl\"ucker embedding is $\Cs$-equivariant and the fixed points $x_{\lambda}$ of the $\Cs$-action on $\Grass_n(z^{-n} \C[z] / z^n \C[z])$ are mapped to the points $w^{\lambda} = 1$ and $w^{\nu} = 0$ for all $\nu \neq \lambda$.  

If, as in the proof of Lemma \ref{lem:cantau}, we make the substitution $-i t_i = p_i$, where $p_i$ is the $i$th power polynomial in the ring $\Lambda$ of symmetric functions, then the $\tau$-function belongs to $\Lambda$. By \cite[Proposition 8.2]{WilsonSegalLoops}, the expansion of $\tau$ in terms of Schur polynomials  
$$
\tau_W = \sum_{\lambda \in \mc{P}} w^{\lambda} s_{\lambda}
$$
has coefficients given by the determinants $w^{\lambda}$. Therefore, if $W \in \Grass_{n} (z^{-n} \C[z] / z^n \C[z])$, then $\tau_W = \sum_{\lambda \in \Box} w^{\lambda} s_{\lambda}$. 

The map $\eta : \Grad \rightarrow \QE$ restricts to an isomorphism $\Grass_n(z^{-n} \C[z] / z^n \C[z]) \rightsim \Grass_n(\C[x]_{2n})$, which sends $V$ to $(z^n V)^{\perp}$. Thus, it is clearly an isomorphism of varieties. If $C \in \Grass_n(\C[x]_{2n})$, then $\tau_C = \sum_{\lambda \in \Box} c^{\lambda} s_{\lambda}$, where each $c^{\lambda}$ is a homogeneous function on $\Grass_n(\C[x]_{2n})$ which once again just defines the usual Pl\"ucker embedding. 

\begin{thm}\label{thm:grassembed}
The map $\eta \circ \beta_n : \CM_n \ra \QE \subset \QGr$ restricts to an isomorphism of algebraic varieties $\OmCM_{\bb,\blambda} \simeq \OmGr_{\bb,\blambda^t} \subset \Grass_{\bb}(\QGr)$.
\end{thm}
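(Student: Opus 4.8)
The plan is to deduce the set-theoretic bijection from the lemmas already established, reduce the statement about varieties to a single block supported at the origin using the factorizations of $\CM_n$, $\Grad$ and $\QE$, verify on that block that the map is a morphism of varieties via the $\tau$-function, and finally upgrade the bijective morphism to an isomorphism by a normality argument. The bijection itself is essentially free: the lemma asserting $\beta_n(\OmCM_{\bb,\blambda})=\OmGrad_{\bb,\blambda}$ gives a bijection $\OmCM_{\bb,\blambda}\rightsim\OmGrad_{\bb,\blambda}$; since $\eta:\Graad\to\QE$ is defined componentwise over $b\in\C$ and $\OmGrad_{\bb,\blambda}=\{\,i(\{W_{b_i}\})\mid\{W_{b_i}\}\in\prod_i\OmGrad_{b_i,\lambda^{(i)}}\,\}$, Proposition \ref{prop:commute120} together with Lemma \ref{lem:schubertmatch} yields $\eta(\OmGrad_{\bb,\blambda})=\bigoplus_i\OmGr_{b_i,(\lambda^{(i)})^t}=\OmGr_{\bb,\blambda^t}$ inside $\Grass_{\bb}(\QGr)$. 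Hence $\eta\circ\beta_n$ restricts to a bijection $\OmCM_{\bb,\blambda}\rightsim\OmGr_{\bb,\blambda^t}$.

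Next I would reduce to the case $\bb=n\cdot 0$. Recall from (\ref{lem:W71}) that $\alpha_{\bb}$ is an isomorphism of affine varieties and $\OmCM_{\bb,\blambda}=\alpha_{\bb}^{-1}(\prod_i\OmCM_{b_i,\lambda^{(i)}})$, while $\OmGr_{\bb,\blambda^t}=\prod_i\OmGr_{b_i,(\lambda^{(i)})^t}$ is a product of varieties; arguing exactly as in the proof of the lemma identifying $\beta_n(\OmCM_{\bb,\blambda})$ with $\OmGrad_{\bb,\blambda}$, using the componentwise definition of $\eta$ and the commuting square (\ref{prop:somefactor}), one sees that $\eta\circ\beta_n$ intertwines $\alpha_{\bb}$ with this product decomposition. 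Finally, translation $(X,Y)\mapsto(X,Y+b\,I_n)$ identifies $\CM(n\cdot 0)\rightsim\CM(n\cdot b)$ and corresponds under $\eta\circ\beta_n$ to multiplication by $e^{bx}$, an isomorphism $\Grass_n(\C[x]_{2n})\rightsim\Grass_n(e^{bx}\C[x]_{2n})$. It therefore suffices to prove that, for each $\lambda\vdash n$, the map $\eta\circ\beta_n$ restricts to an isomorphism of varieties $\OmCM_\lambda\rightsim\OmGr_{\lambda^t}$.

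For this I would first check that $\eta\circ\beta_n$ is a morphism on $\OmCM_\lambda\subset\CM(n\cdot 0)$. On the fibre $\CM(n\cdot 0)$ the image of $\beta_n$ lies in the finite-dimensional Grassmannian $\Grass_n(z^{-n}\C[z]/z^n\C[z])$, since each $\OmGrad_\mu$ with $\mu\vdash n$ does. By (\ref{eq:tausmatch}) and Section 3.8 of \cite{Wilson}, $\tau_{(X,Y)}=\tau_{\beta_n(X,Y)}$; after the substitution $-it_i=p_i$ the left-hand side equals $\det\!\big(X+\sum_i it_i(-Y)^{i-1}\big)$, whose Schur expansion $\sum_\mu w^\mu(\beta_n(X,Y))\,s_\mu$ is finite (the partitions $\mu$ ranging over those fitting in an $n\times n$ box) and has each Pl\"ucker coordinate $w^\mu\circ\beta_n$ polynomial in the entries of $X$ and $Y$. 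Hence $\beta_n:\CM(n\cdot 0)\to\Grass_n(z^{-n}\C[z]/z^n\C[z])$ is a morphism; composing with the isomorphism of varieties $\eta:\Grass_n(z^{-n}\C[z]/z^n\C[z])\rightsim\Grass_n(\C[x]_{2n})$ recorded just before the theorem and restricting gives a morphism $\eta\circ\beta_n:\OmCM_\lambda\to\OmGr_{\lambda^t}$. Now $\OmCM_\lambda\cong\A^n$ by Proposition \ref{prop:fibersCM} and $\OmGr_{\lambda^t}$ is a Schubert cell, hence also $\cong\A^n$; in particular the target is normal and we work over $\C$. Since a bijective morphism from a variety to a normal variety in characteristic zero is an isomorphism, the bijective morphism $\eta\circ\beta_n|_{\OmCM_\lambda}$ is an isomorphism of varieties, and together with the reduction this proves the theorem.

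The main obstacle is that $\Grad$ carries no algebraic structure, so one cannot argue with $\beta_n$ directly; the device is that over a fixed point of $\h^*/\s_n$ the relevant piece of $\beta_n$ factors through a finite-dimensional Grassmannian, where it is computed by the manifestly polynomial $\tau$-function of (\ref{eq:tausmatch}), and one sidesteps verifying that the inverse map is regular by invoking normality of the Schubert cell. A secondary point requiring care is the precise compatibility of $\beta_n$ with the factorization $\alpha_{\bb}$ and with translation, which must be extracted from the square (\ref{prop:somefactor}) and the proof of the lemma identifying $\beta_n(\OmCM_{\bb,\blambda})$ with $\OmGrad_{\bb,\blambda}$.
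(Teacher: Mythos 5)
Your proposal takes essentially the same route as the paper: both first reduce to the single block $\bb = n\cdot 0$ via the factorization of $\CM_n$, $\Grad$ and $\QE$ (using the commuting triangle (\ref{prop:somefactor}) and Proposition \ref{prop:commute120}), and both then exploit the Schur expansion of the $\tau$-function, $\tau_{(X,Y)} = \sum_\mu f_\mu(X,Y)\,s_\mu$ with $f_\mu \in \C[\CM_n]$, to compare $\OmCM_\lambda$ and $\OmGr_{\lambda^t}$ algebraically. The only real difference is the last step. The paper defines the ring map $\C[\OmGr_{\lambda^t}]\to\C[\OmCM_\lambda]$ by $c^\mu\mapsto f_\mu$ and asserts that well-definedness and bijectivity follow from reducedness of both cells together with the fact that $\tau$ separates their closed points; you instead package the $\tau$-coefficients as a \emph{morphism} $\OmCM_\lambda\to\OmGr_{\lambda^t}$, observe it is bijective by the lemma identifying $\beta_n(\OmCM_\lambda)$ with $\OmGrad_\lambda$ and Lemma \ref{lem:schubertmatch}, and conclude by normality of the target ($\cong\A^n$) over $\C$. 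Your formulation is if anything the more careful one: reducedness plus point-separation yields injectivity of the ring map, but surjectivity (i.e.\ that the $f_\mu$ generate $\C[\OmCM_\lambda]$) does not follow from those two facts alone, and it is precisely the normality of $\OmGr_{\lambda^t}\cong\A^n$ — which the paper's phrasing leaves implicit, though of course it is true — that rules out a phenomenon like the normalization of a cuspidal curve and closes that gap. Both proofs are correct; yours spells out the ingredient the paper elides.
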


\begin{proof}
Since both $\eta$ and $\beta_n$ behave well with respect to factorization, by diagram (\ref{prop:somefactor}) and Proposition \ref{prop:commute120}, it suffices to show that $\eta \circ \beta_n : \CM_n \ra \QE$ restricts to an isomorphism of algebraic varieties $\OmCM_{\lambda} \simeq \OmGr_{\lambda^t} \subset \Grass_{n \cdot 0}(\C[x]_{2n})$. We expand 
$$
\tau_{(X,Y)} = \sum_{\mu \in \mc{P}} f_{\mu}(X,Y) s_{\mu},
$$
where each $f_{\lambda}(X,Y) \in \C[\CM_n]$. Define $\C[\OmGr_{\lambda^t}] \rightsim \C[\OmCM_{\lambda}]$ by $c^{\mu}(a_{i,j}) = f_{\mu}(X,Y)$. That this is well-defined and that it is an isomorphism both follow from the fact that the pair of spaces $\OmGr_{\lambda^t}$ and $\OmCM_{\lambda}$ are reduced and that the $\tau$-function distinguishes closed points of both spaces. 
\end{proof}

\subsection{The proof of Theorem \ref{thm:main}}\label{subsection:Schuberintersect}

In this subsection, we give a proof of Theorem \ref{thm:main}. We define $\nu_n : \cX_{n} \rightarrow \QGr$ to be the composition $\eta \circ \beta_n \circ \psi_n$, so that $\nu_n$ identifies $\cX_n$ with its image $\QE_n$ in $\QGr$.  Recall that Theorem \ref{thm:main} claims that $\nu_n$ restricts to an isomorphism of algebraic varieties 
$$
\nu_n : \OmCh_{\bb,\blambda} \stackrel{\sim}{\longrightarrow} \OmGr_{\bb,\blambda^t} \subset \Grass_{\bb}(\QGr). 
$$
This statement will follow from Theorem \ref{thm:grassembed}, if we can show that $\psi_n(\OmCh_{\bb,\blambda}) = \OmCM_{\bb,\blambda}$. By Theorem \ref{thm:factorsagree}, $\psi_n$ is compatible with factorizations. Therefore, it suffices to show that $\psi_n(\OmCh_{\lambda}) = \OmCM_{\lambda}$ for $\lambda$ a partition of $n$. Both $\OmCh_{\lambda}$ and $\OmCM_{\lambda}$ are attracting sets for the $\Cs$-action. Therefore, since $\psi_n$ is $\Cs$-equivariant, it suffices to show that $\psi_n(\bx_{\lambda}) = \mbf{X}_{\lambda}$. This is precisely the statement of Theorem \ref{thm:fixedmatch}, which completes the proof of Theorem \ref{thm:main}. 

\subsection{} Let $N = n^2 - 1$. The Wronskian, definition \ref{defn:Wr}, may be considered as a map $\Wr : \Grass_n(\C[x]_{2n}) \rightarrow \mathbb{P}^{N}$, where 
$$
\Wr (W) = [c_0 : \cdots : c_{N}] \quad \textrm{ if } \quad \Wr_{W}(x) = c_{N} x^{N} + \cdots + c_1 x + c_0.
$$
If $q = (q_1, \ds, q_n) \in \h$, then its image in $\h / \s_n$ is $\ba = (a_1,\ds, a_n)$ where 
$$
\prod_{i = 1}^n (x - q_i) = x^n + a_{n} x^{n-1} + \ds + a_1.
$$
We embed $\h / \s_n$ into $\mathbb{P}^{N}$, as a locally closed subvariety by 
\beq{eq:signembed}
(a_1,a_2,\ds, a_n) \mapsto [a_1 : a_2 : \cdots : a_n : 1 : 0 : \cdots : 0].
\eeq

\begin{prop}\label{prop:wronskianfiber}
The map $\nu_n : \cX_n \rightarrow \QE_n$ restricts to an isomorphism of schemes
$$
\OmCh_{0,\lambda,\ba} \simeq \Wr^{-1}(-\ba) \cap \OmGr_{\lambda},
$$
where the right hand side is the scheme theoretic interesection in $\Grass_n(\C[x]_{2n})$. 
\end{prop}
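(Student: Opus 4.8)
The plan is to deduce the statement from Theorem~\ref{thm:main} (applied with $\bb = n\cdot 0$), after identifying, under the isomorphism $\nu_n$ of that theorem, the map $\varpi : \cX_n \to \h/\s_n$ with the Wronskian map up to the sign automorphism $\sigma : \ba \mapsto -\ba$ of $\h/\s_n$. Recall that $\OmCh_{0,\lambda,\ba}$ is the scheme-theoretic fiber over $\ba$ of the restriction $\varpi : \OmCh_{0,\lambda} \to \h/\s_n$ (equivalently, the scheme-theoretic intersection of $\OmCh_{0,\lambda}$ with $\varpi^{-1}(\ba)$ in $\cX_n$), and that by Theorem~\ref{thm:main} the map $\nu_n$ restricts to an isomorphism of varieties $\OmCh_{0,\lambda} \rightsim \OmGr_{\lambda^t}$. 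Since the spaces comprising $\OmGr_{\lambda^t}$ are canonical of dimension $n$, the Wronskian of each of them has degree exactly $n$ with nonzero (in fact constant) leading coefficient; hence $\Wr$ restricts on $\OmGr_{\lambda^t}$ to a morphism into the affine chart of $\mathbb{P}^{N}$ identified via~(\ref{eq:signembed}) with $\h/\s_n$, and $\Wr^{-1}(-\ba) \cap \OmGr_{\lambda^t}$ is exactly the scheme-theoretic fiber of this morphism over $-\ba$. So it is enough to prove the equality of morphisms $\varpi|_{\OmCh_{0,\lambda}} = \sigma \circ \Wr|_{\OmGr_{\lambda^t}} \circ \nu_n$, and then to pass to scheme-theoretic fibers over $\ba$.

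To prove that equality I would compute $\Wr_{\nu_n(\chi_L)}(x)$ directly for a closed point $\chi_L$ of $\OmCh_{0,\lambda}$. Write $(X,Y) = \psi_n(\chi_L)$, so that $\nu_n(\chi_L) = \eta(\beta_n(X,Y))$. By Lemma~\ref{lem:cantau} we have $\Wr_{\nu_n(\chi_L)}(x) = \tau_{\nu_n(\chi_L)}(x,0,0,\ds)$ and $\tau_{\nu_n(\chi_L)} = \tau_{\beta_n(X,Y)}$; by section~3.8 of~\cite{Wilson} the latter equals $\tau_{(X,Y)}$, and substituting $t_1 = x$, $t_i = 0$ for $i \ge 2$ into~(\ref{eq:tausmatch}) gives
\[
\Wr_{\nu_n(\chi_L)}(x) = \det(x I_n + X).
\]
This is a monic polynomial of degree $n$ whose roots are the negatives of the eigenvalues of $X$. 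On the other hand, the commutative diagram~(\ref{eq:diagramt}) identifies $\varpi(\chi_L)$ with the $\GL_n$-orbit of $X$, that is, with the multiset of eigenvalues of $X$, while the embedding~(\ref{eq:signembed}) sends a monic degree-$n$ polynomial to the projective point recording its coefficients. Comparing, $\Wr(\nu_n(\chi_L))$ is the image under~(\ref{eq:signembed}) of the multiset of negated eigenvalues of $X$; that is, $\Wr(\nu_n(\chi_L)) = -\varpi(\chi_L)$. Because $\OmCh_{0,\lambda}$ is reduced and $\h/\s_n$ is separated, agreement on closed points upgrades to the asserted equality of morphisms.

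Granting this, the scheme-theoretic conclusion is formal. From $\varpi|_{\OmCh_{0,\lambda}} = \sigma \circ \Wr|_{\OmGr_{\lambda^t}} \circ \nu_n$ and the fact that $\sigma$ is an automorphism, the scheme-theoretic fiber of $\varpi|_{\OmCh_{0,\lambda}}$ over $\ba$ coincides with the scheme-theoretic fiber of $\Wr|_{\OmGr_{\lambda^t}} \circ \nu_n$ over $-\ba$, which, $\nu_n$ being an isomorphism, is the $\nu_n$-preimage of the scheme-theoretic fiber of $\Wr|_{\OmGr_{\lambda^t}}$ over $-\ba$; by base change along the locally closed immersion $\OmGr_{\lambda^t} \hookrightarrow \Grass_n(\C[x]_{2n})$ that fiber is precisely $\Wr^{-1}(-\ba) \cap \OmGr_{\lambda^t}$. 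Hence $\nu_n$ restricts to an isomorphism of schemes $\OmCh_{0,\lambda,\ba} \rightsim \Wr^{-1}(-\ba) \cap \OmGr_{\lambda^t}$, as claimed.

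The step I expect to be the only real obstacle is the Wronskian computation, and specifically the bookkeeping of normalizations: one must check that the identifications furnished by Lemma~\ref{lem:cantau}, section~3.8 of~\cite{Wilson} and~(\ref{eq:tausmatch}) yield $\det(x I_n + X)$ on the nose rather than that polynomial up to an overall scalar or a substitution $x \mapsto \pm x$, so that $\Wr \circ \nu_n$ genuinely lands in the affine chart of $\mathbb{P}^{N}$ and the sign that appears is exactly $\ba \mapsto -\ba$ (which is in any case forced, since $\det(x I_n + X)$ is the characteristic polynomial of $-X$, not of $X$). Everything else is Theorem~\ref{thm:main} together with the elementary manipulation of scheme-theoretic fibers carried out above.
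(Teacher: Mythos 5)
Your proposal is correct and follows essentially the same route as the paper: reduce via Theorem \ref{thm:main} and diagram (\ref{eq:diagramt}) to comparing $\varpi$ on $\OmCM_\lambda$ with $\Wr$ on the Schubert cell, and then use (\ref{eq:tausmatch}) together with Lemma \ref{lem:cantau} to get $\Wr_{\beta_n(X,Y)}(t_1) = \tau_{(X,Y)}(t_1,0,\dots) = \det(X + t_1 I_n)$, whose comparison with $\varpi(X,Y) = $ coefficients of $\det(t_1 I_n - X)$ produces exactly the sign $\ba \mapsto -\ba$. The normalization check you flag as the "only real obstacle" is precisely the computation the paper carries out, and it comes out on the nose as you anticipate.
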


\begin{proof}
By Theorem \ref{thm:main}, $\nu_n$ restricts to an isomorphism of algebraic varieties $\OmCh_{\lambda} \simeq \OmGr_{\lambda}$. Based on diagram (\ref{eq:diagramt}), it suffices to replace $\OmCh_{0,\lambda,\ba}$ by $\OmCM_{\lambda} \cap \varpi^{-1}(\ba)$. As a locally closed embedding, $\beta_n : \OmCM_{\lambda} \hookrightarrow \Grass_{0}(\C[x]_{2n})$ was given by the polynomial coefficents of the $\tau$-function. Therefore, it suffices to show that 
\beq{eq:di123}
\xymatrix{
\OmCM_{\lambda} \ar[d]_{\varpi} \ar[r]^{\beta_n} & \OmGr_{\lambda} \ar[d]^{\Wr} \\
\h / \s_n \ar[r]_{(-1)} & \h / \s_n 
}
\eeq
commutes, as morphisms of schemes. For all $(X,Y)$ in $\CM_n$, we have $\mathrm{det} (X + \sum_{i = 1}^\infty i t_i (-Y)^{i-1} ) = \tau_{\beta_n(X,Y)}(t_1,\ds )$, see (\ref{eq:tausmatch}). Setting $t_2 = t_3 = \cdots = 0$ gives $\mathrm{det} (X + t_1) = \tau_{\beta_n(X,Y)}(t_1,0,\ds )$. Equation (\ref{eq:Wrtau}) says that $\Wr_{\beta_n(X,Y)} (t_1) = \tau_{\beta_n(X,Y)}(t_1,0,\ds )$. Thus, $\mathrm{det} (X + t_1) = \Wr_{\beta_n(X,Y)} (t_1)$. Since $\varpi(X,Y)$ is defined to be the coefficients of the polynomial $\mathrm{det} (t_1 - X)$, the diagram \ref{eq:di123} commutes.   
\end{proof}

\begin{remark}\label{rem:someiso12}
We have defined the Wronskian for any homogeneous space of quasi-exponentials. The proof of Proposition \ref{prop:wronskianfiber} shows that, as sets, we have 
$$
\nu_n(\OmCh_{\bb,\blambda,\ba}) = \Wr^{-1}(-\ba) \cap \OmGr_{\bb,\blambda} =: \OmGr_{\bb,\blambda,-\ba}
$$
for all $\ba \in \h / \s_n$, $\bb \in \h^* /\s_n$, and $\blambda$ a multipartition of type $\bb$. 
\end{remark}

\section{Baby Verma modules}\label{sec:baby}

\subsection{} Dual to the Verma modules $\Delta(p,\blambda)$ are the induced modules $\nabla(q,\bmu) = H_n \o_{\C[\h] \rtimes \s_q} \bmu$, where $q \in \h$ and $\bmu \in \Irr( \s_q)$. For each $\ba \in \h / \s_n$ and $\bb \in \h^* / \s_n$, with corresponding maximal ideals $\mf{m}_{\ba} \subset \C[\h]^{\s_n}$ and $\mf{n}_{\bb} \subset \C[\h^*]^{\s_n}$, we define the quotient modules $\Delta(p,\blambda,\ba) =  \Delta(p,\blambda) / \mf{m}_{\ba} \Delta(p,\blambda)$ and $\nabla(q,\bmu,\bb) =  \nabla(q,\bmu) / \mf{n}_{\bb} \nabla(q,\bmu)$. These are the (dual) baby Verma modules. The image of $p$ in $\h^*/W$ is denoted $\bar{p}$ and similarly for $q$.

\begin{lem}\label{lem:dimcpount}
If $\bar{p} = \bb$ and $\bar{q} = \ba$ then 
$$
\dim \Hom_{H_n}(\nabla (q,\bmu,\bb),\Delta (p,\blambda,\ba)) = \dim \Hom_{\s_n}(\Ind_{\s_q}^{\s_n} \bmu^t, \Ind_{\s_p}^{\s_n} \blambda);
$$
otherwise $\Hom_{H_n}(\nabla (q,\bmu,\bb),\Delta (p,\blambda,\ba)) = 0$.
\end{lem}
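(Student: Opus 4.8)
The plan is to peel the Hom space down, using central characters and two standard adjunctions, to a space of $\s_q$-homomorphisms, and then to compute the relevant $\s_q$-module by hand; the componentwise transpose $\bmu^t$ will surface through the action of $\s_q$ on the socle of a coinvariant algebra.

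First I would dispose of the vanishing. The commutative subalgebras $\C[\h]^{\s_n}$ and $\C[\h^*]^{\s_n}$ lie in $Z_n$, and they act on $\nabla(q,\bmu,\bb)$ through the characters $\bar q$ and $\bb$, and on $\Delta(p,\blambda,\ba)$ through the characters $\ba$ and $\bar p$ (using that $\C[\h]$ acts on the cyclic generator of $\nabla(q,\bmu)$ by evaluation at $q$, and that we have quotiented by $\mf{n}_{\bb}$ resp. $\mf{m}_{\ba}$). Since every $H_n$-map intertwines these central actions, $\Hom_{H_n}(\nabla(q,\bmu,\bb),\Delta(p,\blambda,\ba))=0$ unless $\ba=\bar q$ and $\bb=\bar p$; this is the ``otherwise'' clause. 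Assume henceforth $\ba=\bar q$, $\bb=\bar p$. Then $\mf{n}_{\bb}$ annihilates $\Delta(p,\blambda,\ba)$, so any $H_n$-map $\nabla(q,\bmu)\to\Delta(p,\blambda,\ba)$ automatically kills $\mf{n}_{\bb}\nabla(q,\bmu)$ and factors through $\nabla(q,\bmu,\bb)$; combined with tensor--hom adjunction for $\nabla(q,\bmu)=H_n\otimes_{\C[\h]\rtimes\s_q}\bmu$ this yields
$$
\Hom_{H_n}\big(\nabla(q,\bmu,\bb),\Delta(p,\blambda,\ba)\big)
\;\cong\;\Hom_{\C[\h]\rtimes\s_q}\big(\bmu,\Delta(p,\blambda,\ba)\big)
\;\cong\;\Hom_{\s_q}\big(\bmu,\Delta(p,\blambda,\ba)_{q}\big),
$$
where $\bmu$ carries the evaluation-at-$q$ action of $\C[\h]$, the last step holding because the image of any such map must land in the evaluation-at-$q$ weight space $\Delta(p,\blambda,\ba)_{q}$ for the $\C[\h]$-action.

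Next I would compute this weight space as an $\s_q$-module. A routine PBW computation identifies $\Delta(p,\blambda)$, restricted to $\C[\h]\rtimes\s_n$, with $\C[\h]\otimes\Ind_{\s_p}^{\s_n}\blambda$, where $\C[\h]$ acts on the first factor and $\s_n$ acts diagonally; since $\mf{m}_{\ba}\subset\C[\h]^{\s_n}$, passing to the baby Verma gives $\Delta(p,\blambda,\ba)\cong R\otimes\Ind_{\s_p}^{\s_n}\blambda$ with $R:=\C[\h]/\mf{m}_{\ba}\C[\h]=\C[\varpi^{-1}(\ba)]$. Hence $\Delta(p,\blambda,\ba)_{q}\cong R_q\otimes\Res_{\s_q}\Ind_{\s_p}^{\s_n}\blambda$, and the problem is reduced to the $\s_q$-module $R_q$, the evaluation-at-$q$ weight space of $R$. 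Now $\varpi:\h\to\h/\s_n\cong\A^n$ is a complete intersection morphism, its fibres being cut out by the $n$ elementary symmetric functions; so $\varpi^{-1}(\ba)$ is Gorenstein and the localization $R_{(q)}$ is Gorenstein Artinian local, whence $R_q=\mathrm{soc}(R_{(q)})$ is one-dimensional. If the distinct coordinate values of $q$ have multiplicities $m_1,\dots,m_l$, so that $\s_q=\s_{m_1}\times\cdots\times\s_{m_l}$, then the Chinese Remainder Theorem identifies $R_{(q)}$, $\s_q$-equivariantly, with the tensor product of the coinvariant algebras of the $\s_{m_j}$ (the singleton blocks contributing trivial factors); the socle of each coinvariant algebra is its top-degree component, spanned by the Vandermonde and affording the sign character. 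Therefore $R_q\cong\mathrm{sgn}_{\s_q}$ as an $\s_q$-module.

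Putting the pieces together, $\Hom_{H_n}(\nabla(q,\bmu,\bb),\Delta(p,\blambda,\ba))\cong\Hom_{\s_q}(\bmu\otimes\mathrm{sgn}_{\s_q},\Res_{\s_q}\Ind_{\s_p}^{\s_n}\blambda)$; since tensoring an $\s_m$-irreducible by the sign transposes the partition, $\bmu\otimes\mathrm{sgn}_{\s_q}=\bmu^t$, and Frobenius reciprocity converts the right-hand side into $\Hom_{\s_n}(\Ind_{\s_q}^{\s_n}\bmu^t,\Ind_{\s_p}^{\s_n}\blambda)$, which gives the claimed equality of dimensions (indeed an isomorphism of Hom spaces). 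The hard part is the weight-space computation: one needs the fibre of $\varpi$ to be Gorenstein, so that $R_q$ is genuinely one-dimensional rather than merely nonzero, and, more essentially, the identification of its $\s_q$-character with the sign representation — this sign twist is precisely what produces the transpose $\bmu^t$ in the statement and is the only non-formal ingredient in the argument.
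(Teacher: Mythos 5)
Your proposal is correct and follows essentially the same line as the paper's proof: the vanishing is handled by central characters of $\C[\h]^{\s_n}$ and $\C[\h^*]^{\s_n}$, the adjunction reduces the Hom space to a $\C[\h]\rtimes\s_q$-Hom and then to the evaluation-at-$q$ weight space, this weight space is identified with $\mathrm{soc}(\C[\h]^{co\,\s_q})\otimes\Res_{\s_q}\Ind_{\s_p}^{\s_n}\blambda$, the socle is recognized as $\mathrm{sgn}_{\s_q}$, and Frobenius reciprocity finishes. The only cosmetic difference is that you derive the one-dimensionality of the socle and its sign character via the Gorenstein property of $\varpi^{-1}(\ba)$ and CRT into coinvariant algebras, whereas the paper reaches the coinvariant ring by translating the local factor $\C[\h]_q$ to the origin and then cites the known description of its socle — the same underlying facts, packaged slightly differently.
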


\begin{proof}
If $\bar{p} \neq \bb$, then $\mf{n}_{\bb} \cdot \Delta (p,\blambda,\ba) \neq 0$ and hence $\Hom_{H_n}(\nabla (q,\bmu,\bb),\Delta (p,\blambda,\ba)) = 0$. Therefore we assume that $\bar{p} = \bb$. Then, 
\begin{align*}
\Hom_{H_n}(\nabla (q,\bmu,\bb),\Delta (p,\blambda,\ba)) & = \Hom_{H_n}(\nabla (q,\bmu),\Delta (p,\blambda,\ba)) \\
 & = \Hom_{\C[\h] \rtimes \s_q}(\bmu, \Delta (p,\blambda,\ba)).
\end{align*}
As a $\C[\h]$-module, $\bmu$ is just the direct sum of $\dim \bmu$ copies of the skyscraper sheaf at $q$. If $a_1, \ds, a_k$ are the points in the $\s_p$-orbit corresponding to $\ba$, then 
$$
\Delta (p,\blambda,\ba) = \bigoplus_{i = 1}^k \C[\h]_{a_i} \o \Ind_{\s_p}^{\s_n} \blambda
$$
as a $\C[\h]$-module, where $\C[\h]_{a_i}$ is a module supported at $a_i$. Then, 
$$
\Hom_{\C[\h] \rtimes \s_q}(\bmu, \Delta (p,\blambda,\ba)) \subset \Hom_{\C[\h]}(\bmu, \Delta (p,\blambda,\ba))
$$
implies that $\Hom_{\C[\h] \rtimes \s_q}(\bmu, \Delta (p,\blambda,\ba)) = 0$ unless $\bar{q} = \ba$. Therefore, we assume that $\bar{q} = \ba$. Then, there exists some $i$ such that $q = a_i$ and    
$$
\Hom_{\C[\h] \rtimes \s_q}(\bmu, \Delta (p,\blambda,\ba)) = \Hom_{\C[\h] \rtimes \s_q}(\bmu, \C[\h]_{q} \o \Ind_{\s_p}^{\s_n} \blambda).
$$
Under the automorphism $x \mapsto x - q(x)$ of $\C[\h] \rtimes \s_q$, the module $\C[\h]_{q}$ is sent to the coinvariant ring $\C[\h]^{co \s_q}$ and $\bmu$ is sent to $\bmu_0$, which is defined to be the $\C[\h] \rtimes \s_q$-module isomorphic to $\bmu$ as a $\s_q$-module and supported at $0$. Thus, 
\begin{align}
\Hom_{\C[\h] \rtimes \s_q}(\bmu, \C[\h]_{q} \o \Ind_{\s_p}^{\s_n} \blambda) & \simeq \Hom_{\C[\h] \rtimes \s_q}(\bmu_0, \C[\h]^{co \s_q} \o \Ind_{\s_p}^{\s_n} \blambda)  \\
& = \Hom_{\C[\h] \rtimes \s_q}(\bmu_0, \mathrm{soc}(\C[\h]^{co \s_q}) \o \Ind_{\s_p}^{\s_n} \blambda). \label{eq:homspace}
\end{align}
The socle of $\C[\h]^{co \s_q}$ is one-dimensional, isomorphic to the sign representation $\mathrm{sgn}$ as a $\s_q$-module.  Therefore, the space (\ref{eq:homspace}) can be identified with 
$$
\Hom_{\s_q}(\bmu, \mathrm{sgn} \o \Ind_{\s_p}^{\s_n} \blambda) \simeq \Hom_{\s_n}(\Ind_{\s_q}^{\s_n} (\bmu \o \mathrm{sgn}), \Ind_{\s_p}^{\s_n} \blambda).
$$
\end{proof}

\begin{conjecture}
Let $I$ denote the annihilator of the $Z_n$-module $\Hom_{H_n}(\nabla (q,\bmu,\bb),\Verma (p,\blambda,\ba))$, and set $Z(p,q,\blambda,\bmu) = Z_n / I$. We conjecture that $Z (p,q,\blambda,\bmu)$ is a \textit{Gorenstein} ring and that the module $\Hom_{H_n}(\nabla (q,\bmu,\bb),\Verma (p,\blambda,\ba))$ is isomorphic to the coregular ($\simeq$ regular) representation as a $Z(p,q,\blambda,\bmu)$-module. 
\end{conjecture}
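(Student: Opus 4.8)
The plan is to prove the conjecture by localizing on $\cX_n$ at each closed point $\chi_L$ in the (finite) support of $M:=\Hom_{H_n}(\nabla(q,\bmu,\bb),\Delta(p,\blambda,\ba))$ and exploiting that $H_n$ is Azumaya. Since $\cX_n=\Spec \ZH_n$ is smooth, the completed local ring $R$ of $\ZH_n$ at $\chi_L$ is a regular local ring of dimension $2n$, over which $H_n$ becomes a matrix algebra $\Mat_{n!}(R)$ (the Brauer obstruction dies over a complete regular local ring). Morita theory then identifies $H_n$-modules set-theoretically supported at $\chi_L$ with finite length $R$-modules via $N\mapsto eN$ for a rank-one idempotent $e$, and $M_{\chi_L}\cong\Hom_R\!\big(e\,\nabla(q,\bmu,\bb)_{\chi_L},\,e\,\Delta(p,\blambda,\ba)_{\chi_L}\big)$. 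It therefore suffices to treat one $\chi_L$ at a time and then reassemble.

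The first step is to pin down the local structure of the baby Verma modules. By (a refinement of) the analysis of \cite{End}, $\Delta(p,\blambda)$ is Cohen--Macaulay over $\ZH_n$ with reduced scheme-theoretic support the smooth Lagrangian $\OmCh_{\bb,\blambda}\cong\A^n$, and is of generic rank one there in the Morita sense; hence $e\,\Delta(p,\blambda)_{\chi_L}\cong R/\mf a$, where $\mf a$ is the ideal of $\OmCh_{\bb,\blambda}$ in $R$, so $R/\mf a$ is regular local of dimension $n$. Passing to the baby quotient kills the image of the maximal ideal $\mf m_{\ba}\subset\C[\h]^{\s_n}$; since $\Delta(p,\blambda,\ba)$ is finite dimensional and $\mf m_{\ba}$ has $n$ generators, those images are a system of parameters in $R/\mf a$, so $e\,\Delta(p,\blambda,\ba)_{\chi_L}\cong R/\mf d$ with $\mf d:=\mf a+\mf m_{\ba}R$: an Artinian \emph{complete intersection} (cut out of the $2n$-dimensional regular ring $R$ by $n+n$ elements), in particular Gorenstein. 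Symmetrically $e\,\nabla(q,\bmu,\bb)_{\chi_L}\cong R/\mf e$ with $\mf e:=\mf b+\mf n_{\bb}R$, $\mf b$ the ideal of $\mOCh_{\ba,\bmu}$, again Artinian, a complete intersection, Gorenstein. As the conditions cut out by $\mf n_{\bb}$ (resp.\ $\mf m_{\ba}$) already lie in the annihilator of $\Delta(p,\blambda)$ (resp.\ $\nabla(q,\bmu)$), one has $\mf n_{\bb}R\subseteq\mf a$ and $\mf m_{\ba}R\subseteq\mf b$, whence $\mf d+\mf e=\mf a+\mf b$.

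The second step is Matlis duality over $A:=R/\mf d$, which is Artinian local Gorenstein, hence its own injective hull, hence self-dual. Writing $\bar{\mf e}$ for the image of $\mf e$ in $A$ and $B:=R/(\mf a+\mf b)=A/\bar{\mf e}$ for the local ring of $\OmCh_{\bb,\blambda}\cap\mOCh_{\ba,\bmu}$ at $\chi_L$, we get
$$
M_{\chi_L}\;\cong\;\Hom_R(R/\mf e,\,R/\mf d)\;=\;\Hom_A(B,\,A)\;\cong\;\Hom_{\C}(B,\,\C)\;=\;B^{*}.
$$
Matlis duality preserves annihilators of finite length modules, so $\ann_R B^{*}=\ann_R B=\mf a+\mf b$, and therefore $Z(p,q,\blambda,\bmu)_{\chi_L}=B$: the localized Hom space is exactly the coregular representation $B^{*}$ of $B$. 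Finally $B$ is Gorenstein: $B=(R/\mf a)/\bar{\mf b}$ is the quotient of the $n$-dimensional regular local ring $R/\mf a$ by the image $\bar{\mf b}$ of $\mf b$, which has at most $n$ generators (as $\mOCh_{\ba,\bmu}$ is smooth of codimension $n$); since $B$ is Artinian the ideal $\bar{\mf b}$ has height $n$, hence is generated by a regular sequence, so $B$ is a complete intersection. Thus $B^{*}\cong B$ as $B$-modules, and $M_{\chi_L}$ is the coregular $(\simeq$ regular$)$ representation of $Z(p,q,\blambda,\bmu)_{\chi_L}$. Gluing over the finitely many points of the support completes the proof.

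The main obstacle is the single structural input used above: that $\Delta(p,\blambda)$ and $\nabla(q,\bmu)$, viewed through the Morita equivalence, are locally free of rank one on their (smooth, reduced) Lagrangian supports --- equivalently, that these Verma modules are multiplicity-free at the generic point of the support, with reduced annihilator. Everything else (finite dimensionality of the baby Verma modules, hence the Artinian complete-intersection property of $R/\mf d$ and of $B$) is elementary, and indeed if the rank were $r>1$ one would get $(B^{*})^{\oplus r^2}$ and the statement would fail --- so the conjecture is essentially asserting this rank-one property, which is presumably why it is left open. I would establish it either by reworking the explicit constructions of \cite{End}, or --- in keeping with the methods of this paper --- by combining Theorem \ref{thm:main} with the classical fact that the Wronski map restricted to the Schubert cell $\OmGr_{\bb,\blambda^{t}}$ has degree $\dim\blambda$; under $\nu_n$ this degree is the generic fibre size of $\varpi|_{\OmCh_{\bb,\blambda}}$, and comparison with the $\C[\h]$-rank of $\Delta(p,\blambda)$ forces the rank to be one. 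Granting this input, the argument above proves slightly more than is conjectured, namely that $Z(p,q,\blambda,\bmu)$ is always \emph{locally a complete intersection}; and it shows that Assumption \ref{assume:schubert} is needed not for the conjecture itself but only for the further identification of $Z(p,q,\blambda,\bmu)$ with the coordinate ring of a prescribed intersection of Schubert cells, as in Corollary \ref{cor:intersect}.
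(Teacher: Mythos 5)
This is explicitly a \emph{conjecture}: the paper does not prove it, and what it does prove is a special case (Theorem \ref{thm:intersectassume}, the $p=0$ case), and only under the extra Assumption \ref{assume:schubert}. So the right benchmark is the paper's conditional treatment, and your proposal takes a genuinely different route.

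The paper's argument is global and geometric: via the Morita equivalence one replaces the Hom space by $\Hom_{\ZH_n}(\ZH_n/I,\ZH_n/J)$, then (for $p=0$, under Assumption \ref{assume:schubert}) transports the supports through $\nu_n$ into $\Grass_n(\C[x]_{2n})$ so that $\Spec \ZH_n/J$ becomes $\OmGr_{0,\lambda,-\ba}$ and $\Spec \ZH_n/I$ becomes $\Omega_{\bmu}(q)_{\mathrm{can}}$; the Gorenstein property of $\ZH_n/(I+J)$ is then \emph{cited} from the Schubert-calculus results of Mukhin--Tarasov--Varchenko \cite[Lemmas 3.8, 4.3]{SchubertGLN}, and the same source gives the duality statement. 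Your proposal instead does local commutative algebra at each point of the (finite) support: split the Azumaya algebra over the completed local ring, replace the modules by cyclic modules over $R$, observe that the local ring of a baby Verma support is cut out of the $2n$-dimensional regular $R$ by $2n$ elements (hence is an Artinian complete intersection, a fortiori Gorenstein), and then produce the coregular structure by Matlis duality over this Artinian Gorenstein ring. Given the structural input, all of your steps check out (the system-of-parameters, the identity $\mf d+\mf e=\mf a+\mf b$, the formula $\Hom_R(R/\mf e, R/\mf d)=\Hom_A(A/\bar{\mf e},A)\cong B^\vee$, Matlis duality preserving annihilators, and the complete-intersection argument for $B$ itself). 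What you buy over the paper's approach is that it would work for arbitrary $p$, it isolates Assumption \ref{assume:schubert} as being needed only for the Schubert-cell \emph{identification} (Corollary \ref{cor:intersect}) and not for the conjecture itself, and it yields a stronger conclusion, namely that $Z(p,q,\blambda,\bmu)$ is locally a complete intersection, not merely Gorenstein. What you lose is a proof: the approach is conditional.

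The gap, which you correctly name yourself, is the ``rank-one / reduced annihilator'' input: that $e\Delta(p,\blambda)$ (resp.\ $e\nabla(q,\bmu)$), after completion at a point of its support, is $\cong R/\mf a$ with $\mf a$ the (reduced) ideal of the smooth Lagrangian $\OmCh_{\bb,\blambda}$ (resp.\ $\mOCh_{\ba,\bmu}$). What is available from \cite{End}, and what the paper actually invokes (Theorem 1.2 of \emph{loc.\ cit.}), is weaker: cyclicity of the \emph{baby} Verma $e\Delta(p,\blambda,\ba)$ over $\ZH_n$, and freeness of $e\nabla(q,\bmu)$ over $\C[\h^*]^{\s_n}$ of a computable rank. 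Cyclicity of the baby Verma gives you $e\Delta(p,\blambda,\ba)_{\chi_L}\cong R/J'$ for some $J'$, but on its own gives you no control on $J'$: you need $J'=\mf a+\mf m_{\ba}R$ with $\mf a$ the \emph{reduced} ideal and the full Verma a line bundle on the Lagrangian. Your suggested way in (comparing the degree of the Wronski map on $\OmGr_{\bb,\blambda^t}$ with the $\C[\h/\s_n]$-rank of $e\Delta$, via Theorem \ref{thm:main}) is sensible, but note that a degree count only controls the generic fibre, not the scheme structure along the whole Lagrangian; a priori $e\Delta(p,\blambda)$ could be generically rank one but not a line bundle, or the annihilator could be non-reduced away from the generic point, and then your Artinian local rings would not be the ones you claim. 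So the conjecture's difficulty has not been relocated so much as made precise: exactly as you say, it is essentially equivalent to this sheaf-theoretic statement about the Verma modules on their Lagrangian supports, which is not established here nor in \cite{End}.
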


\begin{remark}
It is natural to expect that a suitable generalization of the above holds for rational Cherednik algebras associated to any complex refection group, provided that the intersection of $\Supp \Verma (p,\blambda,\ba)$ and $\Supp \nabla (q,\bmu,\bb)$ is contained in the smooth locus of the generalized Calogero-Moser space. 
\end{remark}

\subsection{Wilson's bispectral involution}\label{sec:bispectral}

There is a natural anti-involution $\Bi : \H_n \rightarrow \H_n^{op}$ on the rational Cherednik algebra, extending the involution $\sigma \mapsto \sigma^{-1}$ on the group algebra $\C \s_n$. It is defined by $\Bi(x_i) = y_i$, $\Bi(y_i) = x_i$ and $\Bi(s_{ij}) = s_{ij}$. This allows us to define an auto-equivalence on $\Lmod{\H_n}_{\mathrm{f.d.}}$, the category of finite dimensional $\H_n$-modules,
$$
\Bi : \Lmod{\H_n}_{\mathrm{f.d.}} \stackrel{\sim}{\longrightarrow} \Lmod{\H_n}_{\mathrm{f.d.}}, \quad \Bi(M) = M^*,
$$
where $M^*$ is the vector space dual and $(h \cdot f)(m) = f( \Bi(h) \cdot m)$ for $h \in \H_n$, $m \in M$ and $f \in M^*$. The anti-involution $\Bi$ restricts to an automorphism of $\ZH_n$ and hence of $\cX_n$. 

On the other hand, Wilson defined the \textit{bispectral involution} $b$ on $\Grad$, which in terms of Baker functions is given by $\widetilde{\psi}_W(z,x) = \widetilde{\psi}_{b(W)}(x,z)$. As noted in \cite[page 4]{Wilson}, the bispectral involution on $\CM_n$ is defined by $b(X,Y) = (Y^t,X^t)$. As one might expect, we have  

\begin{lem}\label{lem:Bipsi}
We have $\psi_n \circ \Bi = b \circ \psi_n$.   
\end{lem}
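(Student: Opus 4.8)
The plan is to verify the identity $\psi_n\circ\Bi=b\circ\psi_n$ on closed points. Since $\psi_n$, $\Bi$ and $b$ are all isomorphisms of affine varieties, it suffices to show $\psi_n(\Bi(\chi_L))=b(\psi_n(\chi_L))$ for every simple $\H_n$-module $L$, where $\chi_L\in\cX_n$ denotes the corresponding closed point. The first thing I would pin down is how the automorphism $\Bi$ of $\cX_n$ acts on closed points: for $z\in\ZH_n$ and $f\in\Bi(L)=L^*$ one has $z\cdot f=\chi_L(\Bi(z))\,f$, so the central character of the simple module $\Bi(L)$ is $\chi_L\circ\Bi$; as $\Bi$ is an involution, the automorphism of $\cX_n$ induced by $\Bi|_{\ZH_n}$ sends $\chi_L$ to $\chi_{\Bi(L)}$. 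Hence the task reduces to computing $\psi_n(\chi_{\Bi(L)})=\big(x_1|_{\Bi(L)^{\s_{n-1}}},\ y_1|_{\Bi(L)^{\s_{n-1}}}\big)$ via Proposition \ref{prop:EGiso} and comparing it with $b(\psi_n(\chi_L))$.

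The heart of the argument is a linear-algebra computation. Because $\s_{n-1}$ fixes the index $1$, the defining relations of $\H_n$ give $\sigma x_1=x_1\sigma$ and $\sigma y_1=y_1\sigma$ for all $\sigma\in\s_{n-1}$, so $x_1$ and $y_1$ are endomorphisms of $L$ as an $\s_{n-1}$-module; in particular they preserve the decomposition $L=L^{\s_{n-1}}\oplus L'$, where $L'$ is the sum of the nontrivial $\s_{n-1}$-isotypic components. Dualizing, $\Bi(L)=(L^{\s_{n-1}})^*\oplus(L')^*$, and since the trivial representation is self-dual $\Bi(L)^{\s_{n-1}}=(L^{\s_{n-1}})^*$. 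By the definition of the $\H_n$-module structure on $\Bi(L)$, the operator $x_1$ acts on $\Bi(L)$ as the transpose of $\Bi(x_1)=y_1$ acting on $L$, and $y_1$ acts as the transpose of $x_1$; restricting to $\Bi(L)^{\s_{n-1}}=(L^{\s_{n-1}})^*$ and using that $L^{\s_{n-1}}$ is an $x_1$- and $y_1$-stable direct summand, I get, in the basis of $\Bi(L)^{\s_{n-1}}$ dual to a chosen basis of $L^{\s_{n-1}}$,
$$
x_1|_{\Bi(L)^{\s_{n-1}}}=\big(y_1|_{L^{\s_{n-1}}}\big)^{t}=Y^{t},\qquad y_1|_{\Bi(L)^{\s_{n-1}}}=\big(x_1|_{L^{\s_{n-1}}}\big)^{t}=X^{t},
$$
where $(X,Y)=\psi_n(\chi_L)$.

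Putting these together, $\psi_n(\Bi(\chi_L))=(Y^{t},X^{t})=b(X,Y)=b(\psi_n(\chi_L))$ as points of $\CM_n=\CMM_n/\!/\PGL_n$, the choice of basis being immaterial modulo the $\PGL_n$-action; this is the assertion of the lemma. I expect the only genuinely delicate point to be the bookkeeping in the middle paragraph, namely checking that the canonical identification $\Bi(L)^{\s_{n-1}}\simeq(L^{\s_{n-1}})^*$ intertwines the restriction of the transposed operator with the transpose of the restricted operator $X=x_1|_{L^{\s_{n-1}}}$, $Y=y_1|_{L^{\s_{n-1}}}$; this is precisely where one invokes the commutation of $x_1,y_1$ with $\s_{n-1}$, which makes $L^{\s_{n-1}}$ a stable summand. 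The rest of the verification is formal.
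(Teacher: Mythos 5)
Your proof is correct and takes the same approach as the paper's one-line proof: observe that under the $\Bi$-twisted module structure on $L^*$, the operator $x_1$ acts as $Y^t$ and $y_1$ as $X^t$ in the dual basis of $(L^{\s_{n-1}})^*=\Bi(L)^{\s_{n-1}}$. You simply spell out the bookkeeping (the action of $\Bi$ on closed points via central characters, and the fact that $x_1,y_1$ commute with $\s_{n-1}$ so that $L^{\s_{n-1}}$ is a stable summand) which the paper leaves implicit.
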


\begin{proof}
Let $L$ be a simple $\H_n$-module and $(X,Y)$ the matrices representing the action of $(x_1,y_1)$ on $L^{\s_{n-1}}$ with respect to some fixed basis. Then, with respect to the dual basis, the action of $(y_1,x_1)$ on $(L^{\s_{n-1}})^*$ is given by $(Y^t,X^t)$. 
\end{proof}

Recall (\ref{sec:degaff}) the $\Cs$-fixed points $\mbf{X}_{\lambda} \in \CM_n$. The following observation is contained in \cite{Baby}. 

\begin{lem}\label{lem:Bifix}
For all $\lambda \vdash n$, $\Bi(L(\lambda)) = L(\lambda)$. Thus, $\Bi(\mbf{X}_{\lambda}) = \mbf{X}_{\lambda}$. 
\end{lem}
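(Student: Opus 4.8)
The statement has two parts, and I would treat them in order: first prove the module-theoretic assertion $\Bi(L(\lambda))\cong L(\lambda)$ (as ordinary, ungraded $\H_n$-modules), and then read off $\Bi(\mbf{X}_\lambda)=\mbf{X}_\lambda$ by transporting this fact along $\psi_n$, exactly as in the proof of Lemma \ref{lem:Bipsi} and Theorem \ref{thm:fixedmatch}. Since the paper already notes that every simple $\H_n$-module is the regular representation of $\s_n$, the $\s_n$-structure alone cannot distinguish the $L(\mu)$'s, so the work is to find a $\Bi$-invariant refinement.

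\textbf{Step 1: $\Bi(L(\lambda))$ is of the form $L(\mu)$.} Because $\Bi$ is a contravariant auto-equivalence of $\Lmod{\H_n}_{\mathrm{f.d.}}$, the module $L(\lambda)^*=\Bi(L(\lambda))$ is again a simple finite-dimensional $\H_n$-module, hence corresponds to a closed point of $\cX_n$. Moreover $\Bi$ restricts to an automorphism of the commutative ring $\ZH_n$ (an anti-automorphism of a commutative ring is an automorphism), hence to an automorphism of $\cX_n$, which on closed points sends $\chi_L$ to $\chi_{\Bi(L)}$. Since $\Bi$ interchanges the $x_i$ (degree $+1$) with the $y_i$ (degree $-1$), this automorphism reverses the $\Z$-grading on $\ZH_n$, so it is equivariant for the $\Cs$-action after twisting by $\alpha\mapsto\alpha^{-1}$; in particular it permutes the finite set of $\Cs$-fixed points of $\cX_n$, which by Section \ref{sec:degaff} is exactly $\{\chi_{L(\mu)}\mid \mu\vdash n\}$. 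Therefore $\Bi(L(\lambda))\cong L(\mu)$ for a unique partition $\mu$ of $n$, and it remains to identify $\mu$.

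\textbf{Step 2: $\mu=\lambda$.} Here I would use the map $\rho:\cX_n\to\C^n/\s_n$ of Section \ref{sec:degaff}, induced by the inclusion $\C[z_1,\ldots,z_n]^{\s_n}\hookrightarrow\ZH_n$. The one genuine computation is that $\Bi$ fixes each element $z_i$: since $\Bi$ is an anti-homomorphism with $\Bi(x_i)=y_i$, $\Bi(y_i)=x_i$ and $\Bi(s_{ij})=s_{ij}$, the formula $z_i=y_ix_i+\sum_{j<i}s_{ij}$ gives $\Bi(z_i)=\Bi(x_i)\Bi(y_i)+\sum_{j<i}s_{ij}=y_ix_i+\sum_{j<i}s_{ij}=z_i$. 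Hence $\Bi$ fixes $\C[z_1,\ldots,z_n]^{\s_n}$ pointwise, so $\rho$ is constant on $\Bi$-orbits in $\cX_n$, whence $\rho(\chi_{L(\mu)})=\rho(\chi_{\Bi(L(\lambda))})=\rho(\chi_{L(\lambda)})$. By the computation in the proof of Theorem \ref{thm:fixedmatch} one has $\rho(\chi_{L(\nu)})=\Res_{\nu}(q^{-1})=\Res_{\nu^t}(q)$ for every $\nu\vdash n$, and distinct partitions have distinct residues; therefore $\mu=\lambda$. (This identity is also recorded in \cite{Baby}; alternatively one could prove it directly by checking, from the explicit matrix representatives in Section \ref{sec:CMfxed}, that the bispectral involution $b(X,Y)=(Y^t,X^t)$ fixes $\mbf{X}_\lambda$ up to $\PGL_n$, but the $\rho$-argument looks shortest.)

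\textbf{Step 3 and the expected obstacle.} For the last assertion: the automorphism $\Bi$ of $\cX_n$ fixes $\chi_{L(\lambda)}$ by Steps 1--2, and under $\psi_n$ it corresponds to the bispectral involution $b$ on $\CM_n$ by Lemma \ref{lem:Bipsi}; since $\psi_n(\chi_{L(\lambda)})=\mbf{X}_\lambda$ by Theorem \ref{thm:fixedmatch}, we get $b(\mbf{X}_\lambda)=\mbf{X}_\lambda$, which is the claimed $\Bi(\mbf{X}_\lambda)=\mbf{X}_\lambda$. I expect the only real difficulty to be bookkeeping rather than mathematics: keeping straight the three incarnations of the involution ($\Bi$ on $\H_n$-modules, $\Bi$ on $\cX_n=\Spec\ZH_n$, and $b$ on $\CM_n$) and being explicit about the bijection between simple modules and points so that $\chi_{\Bi(L)}$ really is the image of $\chi_L$, together with confirming that $\rho$ (equivalently, the residue) separates the $\Cs$-fixed points. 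All of these are already set up in the preceding sections, so no new input should be required.
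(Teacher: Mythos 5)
Your argument is correct, and it is genuinely different from what the paper does: the paper gives no proof at all, simply deferring to \cite{Baby} (where the statement is obtained via graded duality for baby Verma modules). What you propose instead is a self-contained proof using only machinery already developed in the paper: the key observation, which you make explicitly, is that the anti-automorphism $\Bi$ fixes each Cherednik--Dunkl element $z_i = y_ix_i + \sum_{j<i}s_{ij}$ (since $\Bi(y_ix_i)=\Bi(x_i)\Bi(y_i)=y_ix_i$), hence fixes $\C[z_1,\ldots,z_n]^{\s_n}\subset\ZH_n$ pointwise, so that the residue map $\rho$ of Section \ref{sec:degaff} is $\Bi$-invariant. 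Combined with the paper's computation $\rho(\chi_{L(\nu)})=\Res_{\nu^t}(q)$ and the injectivity of the residue on partitions, this pins down $\Bi(L(\lambda))\cong L(\lambda)$.

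Two points worth flagging, both minor. First, Step 1 is not merely bookkeeping: because $\rho$ has positive-dimensional fibres, the residue constraint alone does not identify $\chi_{\Bi(L(\lambda))}$; you genuinely need the grading-reversal argument to know $\Bi(L(\lambda))$ is again a $\Cs$-fixed point, i.e.\ some $L(\mu)$. You did supply this, so the logic is sound, but it deserves the emphasis you gave it. Second, your cross-reference for the classification of $\Cs$-fixed points of $\cX_n$ as $\{\chi_{L(\mu)}\}$ should point to the subsection \emph{The fixed points in $\cX_n$}, not to Section \ref{sec:degaff}. With that small correction, the proof stands. What your route buys over the paper's is independence from \cite{Baby}; what it costs is that it leans on the identity $\rho(\chi_{L(\lambda)})=\Res_{\lambda^t}(q)$, which itself rests on the Jucys--Murphy computation imported from \cite{MoDegen} in the proof of Theorem \ref{thm:fixedmatch}, so it is not quite elementary either.
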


\subsection{Fourier transform}

The Fourier transform, as introduced in \cite[\S 4]{EG}, is an automorphism of order four $\Fo : \H_n \stackrel{\sim}{\rightarrow} \H_n$ define by 
$$
\Fo(x_i) = y_i, \quad \Fo(y_i) = - x_i, \quad \Fo(w) = w, \quad \forall \ i \in [1,n], \ w \in \s_n. 
$$
We can use $\Fo$ to twist representations of $\H_n$. If $M$ is a $\H_n$-module then, as a vector space, ${}^{\Fo} M = M$ and the action of $\H_n$ on ${}^{\Fo} M$ is defined by $h \cdot m = \Fo(h) m$. 

\begin{lem}\label{lem:Fofix}
Choose $p \in \h^*$, $q \in \h$, $\ba \in \h/W$ and $\bb \in \h^* / W$. 
\begin{enumerate}
\item We have 
$$
{}^{\Fo} \Verma (p,\blambda) = \nabla (p,\blambda), \quad {}^{\Fo} \Verma (p,\blambda,\ba) = \nabla (p,\blambda,-\ba),
$$
$$
{}^{\Fo} \nabla (q,\bmu) = \Verma (-q,\bmu), \quad {}^{\Fo} \nabla (q,\bmu,b) = \Verma (-q,\bmu,\bb).
$$
\item Let $\lambda$ be a partition of $n$.  Then, ${}^{\Fo} L(\lambda) \simeq L(\lambda^t)$.
\end{enumerate} 
\end{lem}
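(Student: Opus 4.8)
The plan is to treat the two parts in turn. Part~(1) is a direct computation with induced modules; part~(2) will be reduced --- via the Azumaya property of $\H_n$ and the content morphism $\rho$ of Section~\ref{sec:degaff} --- to the computation of $\rho(\chi_{L(\lambda)})$ already carried out in Section~\ref{sec:residuemap}.

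\emph{Part (1).} Both $\Delta(p,\blambda) = \H_n \o_{\C[\h^*]\rtimes\s_p}\blambda$ and $\nabla(q,\bmu) = \H_n \o_{\C[\h]\rtimes\s_q}\bmu$ are cyclic, generated by the image of $1\o(-)$, on which subspace the polynomial part of the defining subalgebra acts by evaluation at the relevant point and the symmetric group acts tautologically. Since $\Fo$ carries $\C[\h^*]\rtimes\s_p$ onto $\C[\h]\rtimes\s_p$ and fixes $\s_n$ pointwise, unwinding the definition of ${}^\Fo(-)$ identifies the cyclic generator of ${}^\Fo\Delta(p,\blambda)$ with a copy of $\blambda$ on which $\C[\h]\rtimes\s_p$ acts through the point prescribed by $\Fo$; as $\Fo(x_i) = y_i$ carries no sign, that point is again $p$, whence a surjection $\nabla(p,\blambda)\twoheadrightarrow{}^\Fo\Delta(p,\blambda)$, which a PBW dimension count upgrades to an isomorphism. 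The identity ${}^\Fo\nabla(q,\bmu) = \Delta(-q,\bmu)$ follows in the same way --- here the sign in $\Fo(y_i) = -x_i$ produces the $-q$ --- or can be deduced from the first, using that $\Fo^2$ is the automorphism of $\H_n$ given by $-1\in\Cs$. For the baby versions one observes that $\Fo$ carries $\C[\h]^{\s_n}$ isomorphically onto $\C[\h^*]^{\s_n}$, sending $\mf{m}_{\ba}$ to $\mf{n}_{-\ba}$; passing to the corresponding quotients of the first two identities gives ${}^\Fo\Delta(p,\blambda,\ba) = \nabla(p,\blambda,-\ba)$ and ${}^\Fo\nabla(q,\bmu,\bb) = \Delta(-q,\bmu,\bb)$.

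\emph{Part (2).} Since $\Fo$ is an automorphism, ${}^\Fo L(\lambda)$ is a simple $\H_n$-module, with central character $\chi_{L(\lambda)}\circ\Fo$ restricted to $\ZH_n$; writing $\Fo$ also for the induced automorphism of $\cX_n = \Spec\ZH_n$, this reads $\chi_{{}^\Fo L(\lambda)} = \Fo(\chi_{L(\lambda)})$, which is again a $\Cs$-fixed point of $\cX_n$ because $\Fo^2$ acts as $-1\in\Cs$ and $\Cs$ is connected. As $\H_n$ is Azumaya, a simple module is determined up to isomorphism and grading shift by its central character, so ${}^\Fo L(\lambda)\cong L(\mu)$ for the unique partition $\mu$ of $n$ with $\chi_{L(\mu)} = \Fo(\chi_{L(\lambda)})$, and it remains to identify $\mu$. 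Using formula~(\ref{eq:zis}) one computes
$$
\Fo(z_i) = \Fo(y_i)\Fo(x_i) + \sum_{j<i}s_{ij} = -x_iy_i + \sum_{j<i}s_{ij} = -\Bigl(y_ix_i + \sum_{j>i}s_{ij}\Bigr) = -\,w_0\,z_{n+1-i}\,w_0,
$$
where $w_0\in\s_n$ is the longest element; hence for any symmetric polynomial $f$ the central element $\Fo(f(z_1,\dots,z_n)) = w_0\,f(-z_1,\dots,-z_n)\,w_0$ equals $f(-z_1,\dots,-z_n)$, so $\Fo$ acts on $\C[z_1,\dots,z_n]^{\s_n}\subset\ZH_n$ by $z_i\mapsto -z_i$, and therefore $\rho(\Fo(x)) = -\rho(x)$ for all $x\in\cX_n$. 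Combining this with $\rho(\chi_{L(\nu)}) = \Res_{\nu^t}(q)$ from Section~\ref{sec:residuemap} and with $\Res_{\nu}(q^{-1}) = \Res_{\nu^t}(q)$, we obtain
$$
\Res_{\mu^t}(q) = \rho(\chi_{L(\mu)}) = \rho\bigl(\Fo(\chi_{L(\lambda)})\bigr) = -\,\Res_{\lambda^t}(q) = \Res_{\lambda}(q);
$$
since a partition is determined by its residue, $\mu^t = \lambda$, i.e. $\mu = \lambda^t$.

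The routine part is~(1), where the one thing to watch is the single sign in $\Fo(y_i) = -x_i$ --- precisely what forces the asymmetry between $p$ and $-q$, and between $\ba$ and $\bb$, in the statement. I expect the real work to be in~(2): verifying $\Fo(z_i) = -w_0 z_{n+1-i}w_0$, pinning down the induced automorphism of $\cX_n$ precisely enough to read off $\rho\circ\Fo$, and tracing the effect of negating the residue back through transposition of Young diagrams.
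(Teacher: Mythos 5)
Your proof of part~(1) follows the same line as the paper's (which simply notes that $\Fo(\C[\h])=\C[\h^*]$ and $\Fo|_{\C\s_n}=\mathrm{id}$), just with more detail. One small slip: with the paper's convention $h\cdot m = \Fo(h)m$, one has ${}^\Fo(M/\mf{m}_\ba M) = {}^\Fo M / \Fo^{-1}(\mf{m}_\ba)\,{}^\Fo M$, and it is $\Fo^{-1}$ (via $x_i\mapsto -y_i$) that sends $\mf{m}_\ba$ to $\mf{n}_{-\ba}$, not $\Fo$ (which sends $\mf{m}_\ba$ to $\mf{n}_{\ba}$). The signs in the displayed formulas are nevertheless right.

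For part~(2) you take a genuinely different route from the paper, and a nice one. The paper argues representation-theoretically inside the restricted Cherednik algebra: it identifies ${}^\Fo\Delta(0,\lambda,\mbf{0})$ with the dual baby Verma module, locates a copy of $\lambda^t$ in the socle of $\C[\h^*]^{co\,\s_n}\otimes\lambda$ via the Vandermonde $\det(\mathbf y)$, produces a non-zero map $\Delta(0,\lambda^t,\mbf{0})\to{}^\Fo\Delta(0,\lambda,\mbf{0})$, and then uses the uniseriality of composition factors of baby Verma modules together with the short exact sequence defining $L(\lambda)$. You instead reduce the identification to a calculation in $\C^n/\s_n$ using the content morphism $\rho$: you verify $\Fo(z_i)=-w_0 z_{n+1-i} w_0$, conclude $\Fo$ acts on $\C[z_1,\dots,z_n]^{\s_n}$ by $z_i\mapsto -z_i$, hence $\rho\circ\Fo = -\rho$, and then feed in $\rho(\chi_{L(\nu)})=\Res_{\nu^t}(q)$ (already established in the proof of Theorem~\ref{thm:fixedmatch}) together with injectivity of $\nu\mapsto\Res_\nu$ on partitions. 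This reuses the paper's fixed-point machinery and is shorter than the paper's argument, at the price of depending on that earlier calculation; the paper's proof is more self-contained. Two presentational points. First, the reason $\chi_{{}^\Fo L(\lambda)}$ is a $\Cs$-fixed point is not really "because $\Fo^2=-1$ and $\Cs$ is connected"; the clean reason is that $\Fo$ reverses the grading on $\H_n$, so $\Fo\circ(\alpha\cdot) = (\alpha^{-1}\cdot)\circ\Fo$, hence the induced automorphism of $\cX_n$ normalizes the $\Cs$-action and therefore permutes its (finitely many) fixed points; alternatively, ${}^\Fo L(\lambda)$ is again a simple graded module, hence some $L(\mu)$ up to shift. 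Second, the notation $-\Res_{\lambda^t}(q)$ is ambiguous (it looks like a negated Laurent polynomial); what you mean is negation of eigenvalues in $\C^n/\s_n$, i.e.\ $\Res_{\lambda^t}(q^{-1})$, which then equals $\Res_\lambda(q)$. With those clarifications the argument is complete and correct.
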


\begin{proof}
Part (1) follows from the fact that $\Fo(\C[\h]) = \C[\h^*]$ and $\Fo$ acts as the identity on $\C \s_n$. 

By part (1), ${}^{\Fo} \Delta(0,\lambda,\mbf{0}) \simeq \overline{\H}_n \otimes_{\C[\h]^{co \s_n} \rtimes \s_n} \lambda$, where $\overline{\H}_n$ is the restricted rational Cherednik algebra. As a $\C[\h^*]^{co \s_n} \rtimes \s_n$-module, ${}^{\Fo} \Delta(0,\lambda,\mbf{0}) \simeq \C[\h^*]^{co \s_n} \otimes \lambda$. The socle of this module is $\det(\mathbf{y}) \otimes \lambda$, where $\det(\mathbf{y}) = \prod_{i < j} (y_i - y_j)$. Since $\det (\mathbf{y}) \otimes \lambda \simeq \lambda^t$ as an $\s_n$-module and $\h \cdot \det (\mathbf{y}) \otimes \lambda = 0$, it follows that there exists a non-zero homomorphism $\Delta(0,\lambda^t,0) \rightarrow {}^{\Fo} \Delta(0,\lambda,0)$. The image of this homomorphism is contained in the socle of ${}^{\Fo} \Delta(0,\lambda,\mbf{0})$, therefore it must factor through $L(\lambda^t)$, the simple head of $\Delta(0,\lambda^t,\mbf{0})$. The composition factors of ${}^{\Fo} \Delta(0,\lambda,0)$ are all isomorphic (since $\Delta(0,\lambda,\mbf{0})$ also has this property). Hence all these factors must be $L(\lambda^t)$. Applying $\Fo$ to the short exact sequence
$$
0 \rightarrow \Ker \rightarrow \Delta(0,\lambda,\mbf{0}) \rightarrow L(\lambda) \rightarrow 0
$$
shows that ${}^{\Fo} L(\lambda) \simeq L(\lambda^t)$.
\end{proof}

\subsection{Adjoint anti-automorphism}

Define the anti-automorphism $( - )^{\star} : \H_n \rightsim \H_n^{op}$ by $x_i^{\star} = -x_i$, $y_i^{\star} = y_i$ and $s_{i,j}^{\star} = s_{i,j}$. As in (\ref{sec:bispectral}), this defines an auto-equivalence 
$$
( - )^{\star} : \Lmod{\H_n}_{\mathrm{f.d.}} \stackrel{\sim}{\longrightarrow} \Lmod{\H_n}_{\mathrm{f.d.}}, \quad (M)^{\star} = M^*,
$$
where $M^*$ is the vector space dual and $(h \cdot f)(m) = f( h^{\star} \cdot m)$ for $h \in \H_n$, $m \in M$ and $f \in M^*$. We also have the corresponding automorphism $( -)^{\star}$ of $\cX_n$. Define the automorphism $( - )^{\star}$ of $\CM_n$ by $(X,Y) \mapsto (-X^t,Y^t)$ and recall from section \ref{sec:adelic} that $W \mapsto W^*$ defines an automorphism $( - )^*$ of $\Grad$. 

\begin{lem}\label{lem:adjoint}
We have $( - )^{\star} \circ \psi_n = \psi_n \circ ( - )^{\star}$ and $( - )^* \circ \beta_n = \beta_n \circ ( - )^{\star}$.
\end{lem}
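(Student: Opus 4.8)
The plan is to treat the two identities separately. The first, $(-)^{\star}\circ\psi_n=\psi_n\circ(-)^{\star}$, is a direct computation on closed points in the spirit of Lemma \ref{lem:Bipsi}; the second, $(-)^{*}\circ\beta_n=\beta_n\circ(-)^{\star}$, reduces to the classical fact that the formal adjoint on Sato's Grassmannian negates the KP times.

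For the first identity, since $\psi_n$ and both copies of $(-)^{\star}$ are isomorphisms of varieties it suffices to verify $\psi_n(L^{\star})=\psi_n(L)^{\star}$ for every simple $\H_n$-module $L$. First I would record that the trivial idempotent $e_0=\tfrac{1}{(n-1)!}\sum_{\sigma\in\s_{n-1}}\sigma$ is fixed by $(-)^{\star}$ (which acts as $\sigma\mapsto\sigma^{-1}$ on $\C\s_n$) and commutes with $x_1,y_1$, so that $(L^{\star})^{\s_{n-1}}=e_0L^{*}$ is canonically the linear dual of $L^{\s_{n-1}}=e_0L$ via restriction of functionals. Then, fixing a basis $m_1,\dots,m_n$ of $L^{\s_{n-1}}$ with matrices $X,Y$ for the action of $x_1,y_1$, the dual-basis computation $(x_1\cdot f_i)(m_j)=f_i(x_1^{\star}m_j)=-f_i(x_1m_j)$ and $(y_1\cdot f_i)(m_j)=f_i(y_1^{\star}m_j)=f_i(y_1m_j)$ (using $x_1^{\star}=-x_1$, $y_1^{\star}=y_1$) shows that $x_1,y_1$ act on $(L^{\star})^{\s_{n-1}}$ by $-X^t$ and $Y^t$. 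Hence $\psi_n(L^{\star})=(-X^t,Y^t)$, which is exactly $\psi_n(L)^{\star}$ by the definition of $(-)^{\star}$ on $\CM_n$. This part is routine.

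For the second identity I would again work on closed points. Since a point of $\Grad$ is determined up to scalar by its $\tau$-function (equivalently by its Pl\"ucker coordinates, cf.\ Section \ref{sec:tau}), it is enough to show $\tau_{(\beta_n(X,Y))^{*}}$ and $\tau_{\beta_n((X,Y)^{\star})}$ are proportional. Using $\tau_{(X,Y)}=\tau_{\beta_n(X,Y)}$, formula (\ref{eq:tausmatch}), the identity $(-Y^t)^{i-1}=((-Y)^{i-1})^{t}$, and invariance of the determinant under transposition, one computes
$$
\tau_{\beta_n((X,Y)^{\star})}(\mathbf{t})=\det\Big(-X^t+\sum_{i\ge1}i\,t_i\,(-Y^t)^{i-1}\Big)=(-1)^n\,\tau_{\beta_n(X,Y)}(-\mathbf{t}).
$$
So what remains is the general statement that for $W\in\Grad$ the formal adjoint $W\mapsto W^{*}=\ann_\infty W$ satisfies $\tau_{W^{*}}(\mathbf{t})=c\cdot\tau_W(-\mathbf{t})$ for some nonzero scalar $c$. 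I would either cite this (it is classical; see \cite{Wilson}, \cite{WilsonSegalLoops}) or prove it directly by reducing, via the support decomposition which $(-)^{*}$ and $\tau$ both respect, to $W\in\Grass_n(z^{-n}\C[z]/z^n\C[z])$: there $W^{*}=\ann_\infty W$ is the ordinary orthogonal complement for the nondegenerate pairing $\langle z^i,z^j\rangle=\res_\infty z^{i+j}\,dz$, whose Pl\"ucker coordinates are, up to a single global sign, the complementary minors of those of $W$; under the partition labelling of Section \ref{sec:cells} this complementation is exactly $\lambda\mapsto\lambda^t$ (the identity $\Z=S_\lambda\sqcup(-S_{\lambda^t})$ already used in the proof of Lemma \ref{lem:schubertmatch}), while $s_\lambda\mapsto(-1)^{|\lambda|}s_{\lambda^t}$ under $p_i\mapsto-p_i$ by dual Jacobi--Trudi. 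Expanding $\tau_W=\sum_\lambda w^{\lambda}s_\lambda$ and combining the two facts yields $\tau_{W^{*}}(\mathbf{t})=\pm\tau_W(-\mathbf{t})$.

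The main obstacle is precisely this last point: pinning down (or citing with the correct normalization) the relation $\tau_{W^{*}}(\mathbf{t})\propto\tau_W(-\mathbf{t})$, which is where the sign bookkeeping and the Pl\"ucker-label/transpose combinatorics really live. Once that is in hand, both displayed $\tau$-functions are proportional, so $\beta_n((X,Y)^{\star})$ and $(\beta_n(X,Y))^{*}$ are the same point of $\Grad$, and everything else is formal manipulation of duals and determinants.
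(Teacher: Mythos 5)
Your proposal is correct, and the first half matches the paper exactly: the paper treats $(-)^{\star}\circ\psi_n=\psi_n\circ(-)^{\star}$ as ``completely analogous to Lemma~\ref{lem:Bipsi},'' which is precisely the dual-basis computation you carry out (noting $e_0$ is $\star$-invariant, so $(L^{\star})^{\s_{n-1}}$ is the linear dual of $L^{\s_{n-1}}$, and reading off $(-X^t,Y^t)$ from $x_1^{\star}=-x_1$, $y_1^{\star}=y_1$).

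For the second identity the paper simply cites \cite[Lemma 7.7]{Wilson}, whereas you offer a self-contained alternative through $\tau$-functions. Your route is sound: the determinant manipulation $\tau_{\beta_n((X,Y)^{\star})}(\mathbf{t})=(-1)^n\tau_{\beta_n(X,Y)}(-\mathbf{t})$ is correct, the identity $s_\lambda\mapsto(-1)^{|\lambda|}s_{\lambda^t}$ under $p_i\mapsto-p_i$ is correct, and the claim that the annihilator under $\langle z^i,z^j\rangle=\res_\infty z^{i+j}\,dz$ sends the cell labelled $\lambda$ to the one labelled $\lambda^t$ does check out (the residue pairing is the anti-diagonal pairing on $z^{-n}\C[z]/z^n\C[z]$, and a short index computation gives $\lambda\mapsto\lambda^t$, not $\lambda\mapsto\overline{\lambda}$ as one might first guess). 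The trade-off is that your route requires pinning down the global sign in $\tau_{W^{*}}(\mathbf{t})\propto\tau_W(-\mathbf{t})$ and making the translation-to-support-$0$ reduction precise, whereas Wilson's Lemma 7.7 gives the statement directly at the level of the construction of $\beta_n$. Both are legitimate; the citation is shorter, your argument makes the combinatorics visible and ties cleanly into the Pl\"ucker/Schur-function formalism already set up in Section~\ref{sec:cells}.
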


\begin{proof}
The proof of the first statement is completely analogous to the proof of Lemma \ref{lem:Bipsi}. The second statement is \cite[Lemma 7.7]{Wilson}. 
\end{proof}

Let $\blambda = (\lambda^{(1)}, \ds, \lambda^{(k)})$ be a multipartition. The transpose of $\blambda$ is defined componentwise, $\blambda^t = ((\lambda^{(1)})^t, \ds, (\lambda^{(k)})^t)$.

\begin{prop}\label{prop:adjointiso}
Under the adjoint automorphism, $\OmCh_{\bb,\blambda}^{\star} = \OmCh_{\bb,\blambda^t}$.
\end{prop}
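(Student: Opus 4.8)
The plan is to transport the statement to Wilson's Calogero--Moser space via $\psi_n$, where the adjoint automorphism is the completely explicit map $(X,Y)\mapsto(-X^{t},Y^{t})$, and then to exploit the description of the cells $\OmCM_{\bb,\blambda}$ as attracting sets for the $\Cs$-action assembled out of the fixed points $\mathbf{X}_\lambda$.

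First I would pass to the Calogero--Moser side. By Lemma \ref{lem:adjoint} we have $\psi_n\circ(-)^{\star}=(-)^{\star}\circ\psi_n$, and the computation in \S\ref{subsection:Schuberintersect} gives $\psi_n(\OmCh_{\bb,\blambda})=\OmCM_{\bb,\blambda}$; hence it is enough to prove $\OmCM_{\bb,\blambda}^{\star}=\OmCM_{\bb,\blambda^{t}}$ inside $\CM_n$, where $(-)^{\star}$ is the involution $(X,Y)\mapsto(-X^{t},Y^{t})$. A one-line matrix check ($[-X^{t},Y^{t}]+I_n=([X,Y]+I_n)^{t}$, and conjugation by $g$ becomes conjugation by $(g^{t})^{-1}$) shows this is a well-defined automorphism of $\CM_n$. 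It is $\Cs$-equivariant, since $\alpha\cdot(X,Y)=(\alpha^{-1}X,\alpha Y)$ is untouched by transposing $X$; it preserves each fibre of $\pi$, since $Y^{t}$ has the same spectrum as $Y$; it commutes with the translation isomorphisms $\CM(m\cdot 0)\simeq\CM(m\cdot b)$, $(X,Y)\mapsto(X,Y+bI_m)$; and it commutes with the factorization isomorphism $\alpha_{\bb}$ of Lemma \ref{lem:W71}, because that isomorphism merely extracts the diagonal blocks of $X$ and $Y$ with respect to the generalized eigenspace decomposition of $Y$, which transposition respects block by block. Granting this, $\OmCM_{\bb,\blambda}^{\star}=\OmCM_{\bb,\blambda^{t}}$ reduces to $\OmCM_\lambda^{\star}=\OmCM_{\lambda^{t}}$ for every partition $\lambda$.

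Now $\OmCM_\lambda$ is, by definition, the attracting set $\{\mathbf{X}\in\CM_m:\lim_{\alpha\to\infty}\alpha\cdot\mathbf{X}=\mathbf{X}_\lambda\}$, so $\Cs$-equivariance of $(-)^{\star}$ identifies $\OmCM_\lambda^{\star}$ with the attracting set of $\mathbf{X}_\lambda^{\star}$, and everything comes down to the identity $\mathbf{X}_\lambda^{\star}=\mathbf{X}_{\lambda^{t}}$ of fixed points. Here I would use Theorem \ref{thm:fixedmatch}: $\mathbf{X}_\lambda=\psi_n(\chi_{L(\lambda)})$, and $(-)^{\star}$ on $\cX_n$ sends $\chi_{L(\lambda)}$ to $\chi_{L(\lambda)^{\star}}$ because $\mathrm{Ann}_{Z_n}(L(\lambda)^{\star})=\star(\mathrm{Ann}_{Z_n}L(\lambda))$. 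It remains to identify the simple module $L(\lambda)^{\star}$. On generators one checks directly that $h^{\star}=\Fo(\Bi(h))$ (indeed $x_i\mapsto y_i\mapsto -x_i$, $y_i\mapsto x_i\mapsto y_i$, $s_{ij}\mapsto s_{ij}\mapsto s_{ij}$); since $\Bi$ is an anti-automorphism and $\Fo$ an automorphism, this identity of maps is exactly the identification of functors $(-)^{\star}\simeq\Bi\circ({}^{\Fo}(-))$ on $\Lmod{\H_n}_{\mathrm{f.d.}}$. Therefore $L(\lambda)^{\star}\simeq\Bi({}^{\Fo}L(\lambda))\simeq\Bi(L(\lambda^{t}))\simeq L(\lambda^{t})$ by Lemma \ref{lem:Fofix}(2) and then Lemma \ref{lem:Bifix}, whence $\mathbf{X}_\lambda^{\star}=\psi_n(\chi_{L(\lambda^{t})})=\mathbf{X}_{\lambda^{t}}$ and the proof closes by running the reductions backwards.

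The argument is a chain of formal reductions; the only steps demanding genuine care are the correct composition order in the last paragraph---making sure that combining $\Bi$ and $\Fo$ into $(-)^{\star}$ at the level of modules produces $\Bi\circ({}^{\Fo}(-))$ and not the reverse composition---together with the routine but necessary verification that $(X,Y)\mapsto(-X^{t},Y^{t})$ is $\Cs$-equivariant and compatible with $\alpha_{\bb}$ and the translations, so that the general $\bb$ genuinely follows from the single-partition case.
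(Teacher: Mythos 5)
Your proof is correct and follows essentially the same route as the paper's own argument: pass to $\CM_n$ via Lemma~\ref{lem:adjoint} and the identification $\psi_n(\OmCh_{\bb,\blambda})=\OmCM_{\bb,\blambda}$, reduce to a single Schubert cell $\OmCM_\lambda$ using compatibility of $(-)^{\star}$ with the factorization $\alpha_{\bb}$ and with translations, reduce further to fixed points via $\Cs$-equivariance, and conclude from $(-)^{\star}=\Fo\circ\Bi$ together with Lemmata~\ref{lem:Bifix} and~\ref{lem:Fofix}. The only thing you add is a careful check that at the module level $(-)^{\star}\simeq\Bi\circ({}^{\Fo}(-))$ rather than the reverse composition; this is worth noting, though since $\Bi$ fixes every $L(\mu)$ the two orders happen to give the same answer here.
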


\begin{proof}
By Lemma \ref{lem:adjoint}, we can work either with the rational Cherednik algebra or in the Calogero-Moser space. First, we note that one can deduce from the explicit formula for $( - )^{\star}$ on $\CM_n$, together with the factorization construction given by Wilson, section \ref{sec:CMFactor}, that we have 
$$
(\OmCM_{\bb,\blambda})^{\star} = \alpha^{-1}_{\bb} ((\OmCM_{b_1,\lambda^{(1)}})^{\star} \times \cdots \times (\OmCM_{b_k,\lambda^{(k)}})^{\star}). 
$$
Moreover, for $b \in \C$ and $t_b : \CM_n \rightsim \CM_n$, $t_b(X,Y) = (X, Y - b I_n)$ we have 
$$
(\OmCM_{\lambda})^{\star} = [ t_b(\OmCM_{b,\lambda}) ]^{\star} = t_b( [\OmCM_{b,\lambda}]^{\star}).
$$
Therefore, it suffices to show that $(\OmCM_{\lambda})^{\star} = \OmCM_{\lambda^t}$. The automorphism $(- )^{\star}$ is also $\Cs$-equivariant. Hence, it suffices to show that $\mbf{X}_{\lambda}^{\star} = \mbf{X}_{\lambda^t}$. For this, we use the fact that $( - )^{\star} = \Fo \circ \Bi$. Therefore, the result follows from Lemmata \ref{lem:Bifix} and \ref{lem:Fofix}.  
\end{proof} 

\section{Intersecting Schubert cells}

\subsection{} Recall that, in addition to the Verma modules, we also defined in section \ref{sec:baby} the dual Verma modules $\nabla(q,\bmu)$. Considered as $\ZH_n$-modules, their supports were denoted $\mOCh_{\ba,\bmu}$, where $\bar{q} = \ba$ in $\h / \s_n$. In this section, we describe the sets $\nu_n(\mOCh_{\ba,\bmu})$. 

\begin{defn}\label{defn:exponents}
Let $C \in \QGr$ be an $n$-dimensional space of quasi-exponentials. Then, the \textit{sequence of exponents} of $C$ at a point $b \in \C \cup \{ \infty \}$ is the (unique) set of integers $\mbf{d} = \{ d_0 < \cdots < d_{n-1} \}$ with the property that, for each $i$, there exists a function $f \in C$ with order $d_i$ at $b$. A point $b$ of $\C \cup \{ \infty \}$ is said to be \textit{singular} if the exponents of $C$ at $b$ differs from $\{ 0, \ds, n-1\}$.
\end{defn}

Let $\ba = \sum_{i = 1}^k n_i a_i \in \h / \s_n$, where the $a_i$ are pairwise distinct. Choose a multipartition $\bmu = (\mu^{(1)}, \ds, \mu^{(k)})$ of $n$ such that $\mu^{(i)} \vdash n_i$. From $\bmu$ we define the tuple of integers $\mbf{d} = \{ d_{i,j} \ | \ i = 1,\ds, k, \ j = 0, \ds, n_i -1 \}$ by
\beq{eq:dij}
d_{i,j} := \mu^{(i)}_{n_i - j} + n_i - (j+1).
\eeq
Then, set of all $C \in \QE$ such that the singularities of $C$ are $\{ a_1, \ds, a_k \}$ and the exponents of $C$ at $a_i$ are 
$$
\{ 0 < \cdots < 2n - n_i - 1 < 2n - n_i + d_{i,1} < \cdots < 2n - n_i + d_{i,n_i} \}, 
$$
is denoted $\mOGr_{\ba,\bmu}$. The parameterization is chosen so that we can apply \cite[Theorem 2.6]{MTVDuality}, which says that
\begin{equation}\label{prop:MTVduality}
(\OmGr_{\ba,\bmu})^{\Bi} = \mOGr_{\ba,\bmu}.
\end{equation}
As a consequence, 

\begin{thm}\label{thm:bigdiagram}
For all $q \in \h$ with $\ba = \bar{q} \in \h / \s_n$, $\bmu \in W_q$ and $\bb \in \h^* / \s_n$, the map $\nu_n$ gives bijections
$$
\xymatrix{
\mOCh_{\ba,\bmu} \ar[rr]^{\sim} & & \mOGr_{\ba,\bmu} \\
\mOCh_{\ba,\bmu,\bb} \ar[rr]^{\sim} \ar@{^{(}->}[u] & & \mOGr_{\ba,\bmu,-\bb} \ar@{^{(}->}[u] 
}
$$
\end{thm}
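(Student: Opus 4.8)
The plan is to deduce both bijections from Theorem~\ref{thm:main} and the identity~(\ref{prop:MTVduality}) by transporting everything through Wilson's bispectral involution. Two ingredients have to be put in place. The first is that $\nu_n$ is equivariant for the bispectral involution $\Bi$ on $\cX_n$ and the corresponding involution $\Bi$ on $\QGr$. Since $\nu_n = \eta\circ\beta_n\circ\psi_n$, Lemma~\ref{lem:Bipsi} shows that $\psi_n$ intertwines $\Bi$ on $\cX_n$ with $b\colon (X,Y)\mapsto(Y^t,X^t)$ on $\CM_n$; the map $\beta_n$ intertwines $b$ with Wilson's bispectral involution on $\Grad$ (\cite[\S 7]{Wilson}); and I would check that the bijection $\eta$ of Proposition~\ref{prop:commute120} carries the bispectral involution of $\Grad$ to the involution of $\QGr$ that figures in~(\ref{prop:MTVduality}). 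Composing, $\nu_n\circ\Bi = \Bi\circ\nu_n$.

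The second ingredient is the description of the $\Bi$-image of the Lagrangian $\OmCh_{\bb,\blambda}$. Here I would use that $(-)^{\star} = \Fo\circ\Bi$ (from the proof of Proposition~\ref{prop:adjointiso}), so that $\Bi = \Fo^{-1}\circ(-)^{\star}$ as automorphisms of $\cX_n$. By Proposition~\ref{prop:adjointiso} the adjoint automorphism sends $\OmCh_{\bb,\blambda}$ to $\OmCh_{\bb,\blambda^{t}}$, while by Lemma~\ref{lem:Fofix}(1) the Fourier automorphism interchanges $\OmCh_{\bb,\blambda}$ with $\mOCh_{\bb,\blambda}$, where the label $\bb$ is read through $\Fo$ as an element of $\h/\s_n$. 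Hence $\Bi(\OmCh_{\bb,\blambda}) = \mOCh_{\bb,\blambda^{t}}$, and since $\Bi$ interchanges the structure maps $\pi$ and $\varpi$ it also carries each fibre intersection $\OmCh_{\bb,\blambda}\cap\varpi^{-1}(\cdot)$ over a point of $\h/\s_n$ to $\mOCh_{\bb,\blambda^{t}}\cap\pi^{-1}(\cdot)$ over the same point. (As a check, when $\bb = n\cdot 0$ both sides are the repelling set of the torus fixed point $\mbf{X}_{\lambda^{t}}$, by Lemmata~\ref{lem:Bifix} and~\ref{lem:Fofix}(2).)

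With these in hand the upper bijection follows from the chain
\[
\nu_n(\mOCh_{\ba,\bmu}) \;=\; \nu_n\bigl(\Bi(\OmCh_{\ba,\bmu^{t}})\bigr) \;=\; \Bi\bigl(\nu_n(\OmCh_{\ba,\bmu^{t}})\bigr) \;=\; \Bi\bigl(\OmGr_{\ba,\bmu}\bigr) \;=\; \mOGr_{\ba,\bmu},
\]
where $\ba\in\h/\s_n$ is read as a point of $\h^{*}/\s_n$, the third equality is Theorem~\ref{thm:main} (which converts $\bmu^{t}$ back into $\bmu$), and the last is~(\ref{prop:MTVduality}). For the lower bijection I would intersect with the relevant fibre: $\mOCh_{\ba,\bmu,\bb} = \Bi(\OmCh_{\ba,\bmu^{t},\bb})$ with the third label now specifying a $\pi$-fibre, Remark~\ref{rem:someiso12} identifies $\nu_n(\OmCh_{\ba,\bmu^{t},\bb})$ with the locus cut out inside $\OmGr_{\ba,\bmu}$ by the Wronskian condition $\Wr^{-1}(-\bb)$, and applying $\Bi$ together with the fact that the bispectral involution carries this Wronskian locus to the locus defined by the exponential support then gives $\nu_n(\mOCh_{\ba,\bmu,\bb}) = \mOGr_{\ba,\bmu,-\bb}$.

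The hard part will be the compatibility asserted in the first paragraph: that $\eta$ carries Wilson's bispectral involution on $\Grad$ to the involution on spaces of quasi-exponentials used in~\cite{MTVDuality}, i.e.\ that ``$\Bi$'' in~(\ref{prop:MTVduality}) really is the involution that descends, via $\psi_n$ and $\beta_n$, from the anti-involution $\Bi$ of $\H_n$. This amounts to unwinding the construction of the canonical representative $\eta(W)$ and of the bispectral involution in terms of Baker functions, and checking that $W\mapsto b(W)$ on $\Grad$ corresponds, after passing to the annihilator canonical space, to exchanging the spectral variable $z$ of $\C(z)$ with the variable $x$ of the quasi-exponentials. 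The remaining work is bookkeeping: the transposes cancel between Theorem~\ref{thm:main} and the labelling of $\mOGr_{\ba,\bmu}$, and the identification of $\h/\s_n$ with $\h^{*}/\s_n$ — with the attendant sign change $\bb\leftrightarrow-\bb$ — is precisely the one effected by $\Fo$ and $\Bi$.
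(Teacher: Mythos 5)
Your proposal is correct and follows essentially the same route as the paper: establish that $\nu_n$ intertwines the bispectral involutions, compute $\OmCh_{\ba,\bmu,\bb}^{\Bi}=\mOCh_{\ba,\bmu^t,\bb}$ by factoring $\Bi$ through $\Fo$ and $(-)^{\star}$ (the paper writes $\Bi = \Fo\circ(-)^{\star}\circ(-1)$, which agrees with your $\Fo^{-1}\circ(-)^{\star}$ and makes the sign bookkeeping on the third label explicit), and then combine Theorem \ref{thm:main}/Remark \ref{rem:someiso12} with equation (\ref{prop:MTVduality}). The compatibility of $\eta$ with the bispectral involution that you flag as the hard part is indeed the point the paper passes over by citing \cite{MTVDuality}.
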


\begin{proof}
By remark \ref{rem:someiso12}, $\nu_n(\OmCh_{\bb,\blambda,\ba}) = \OmGr_{\bb,\blambda^t,-\ba}$. As noted in section \ref{sec:bispectral}, the map $\nu_n$ intertwines the bispectral involution on $\cX_n$ with Wilson's bispectral involution on $\QE$ (or rather the corresponding integral transform as defined in \cite{MTVDuality}). Equation (\ref{prop:MTVduality}) implies that $(\OmGr_{\ba,\bmu,\bb})^{\Bi} = \mOGr_{\ba,\bmu,\bb}$. 

Let $(-1) : \H_n \rightsim \H_n$ be the isomorphism which is the identity on $\s_n$ and maps $x_i$ to $-x_i$ and $y_j$ to $-y_j$. Then, $\Bi = \Fo \circ ( - )^{\star} \circ (-1)$. We have $\OmCh_{\ba,\bmu,\bb}^{(-1)} = \OmCh_{-\ba,\bmu,-\bb}$. Proposition \ref{prop:adjointiso} implies that $\OmCh_{-\ba,\bmu,-\bb}^{\star} = \OmCh_{-\ba,\bmu^t,\bb}$ and Lemma \ref{lem:Fofix} (1) implies that $\OmCh_{-\ba,\bmu^t,\bb}^{\Fo} = \mOCh_{\ba,\bmu^t,\bb}$. Therefore, $\OmCh_{\ba,\bmu,\bb}^{\Bi} = \mOCh_{\ba,\bmu^t,\bb}$. This implies the claim of the theorem.  
\end{proof}

\subsection{} For $q_i \in \C$, let $\mc{F}(q_i)$ be the complete flag 
$$
\mc{F}_{\bullet}(q_i) \ : \ \mc{F}_j(q_i) = (x - q_i)^{2n-j} \C[x]_j, \quad 0 \le j \le 2n,
$$
in $\C[x]_{2n}$. Let $q = (q_1, \ds, q_1, q_2, \ds, q_2, q_3, \ds )$, where the $q_i$ are pairwise distinct and $q_i$ occurs $n_i$ terms. Let $\bmu = (\mu^{(1)}, \ds, \mu^{(k)})$ be a multipartition with $\mu^{(i)} \vdash n_i$; equivalently $\bmu \in \Irr (\s_q)$. Then, we define 
$$
\Omega_{\bmu}(q) = \bigcap_{i = 1}^k \Omega_{\mu^{(i)}}(\mc{F}(q_i)),
$$
a scheme theoretic intersection of Schubert cells in $\Grass_n(\C[x]_{2n})$. Let $\Grass_n(\C[x]_{2n})_{\mathrm{can}}$ denote the intersection of $\Grass_n(\C[x]_{2n})$ with $\QE$ in $\QGr$, considered as a reduced variety. Then, $\Grass_n(\C[x]_{2n})_{\mathrm{can}} = \bigsqcup_{\lambda \vdash n} \OmGr_{\lambda}$. We define $\Omega_{\bmu}(q)_{\mathrm{can}}$ to be the scheme-theoretic intersection of $\Omega_{\bmu}(q)$ with $\Grass_n(\C[x]_{2n})_{\mathrm{can}}$. In order to prove a special case of the above conjecture when $W = \s_n$, we need to make the following technical assumption. 

\begin{assumption}\label{assume:schubert}
We have an equality $\nu_n(\mOCh_{\ba,\bmu,n \cdot 0}) = \Omega_{\bmu}(q)_{\mathrm{can}}$ as \textit{subschemes} of $\Grass_n(\C[x]_{2n})$. 
\end{assumption}

We remark that neither $\mOCh_{\ba,\bmu,n \cdot 0}$ or $\Omega_{\bmu}(q)_{\mathrm{can}}$ is a reduced scheme. In order to convince the reader that assumption \ref{assume:schubert} is not unreasonable, we have 

\begin{lem}\label{lem:dimcount1}
Let $q \in \h$ and $\bmu \in \Irr (\s_q)$. Then, we have an equality $\nu_n(\mOCh_{\ba,\bmu,n \cdot 0}) = \Omega_{\bmu}(q)_{\mathrm{can}}$ of subsets of $\Grass_n(\C[x]_{2n})$ and 
$$
\dim \C[\mOCh_{\ba,\bmu,n \cdot 0}] = \dim \C[\Omega_{\bmu}(q)_{\mathrm{can}}]  = |\s_n / \s_q | \dim \bmu.
$$
\end{lem}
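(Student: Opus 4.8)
The statement bundles three assertions together: the set‑theoretic equality, the value of $\dim\C[\mOCh_{\ba,\bmu,n\cdot 0}]$, and the value of $\dim\C[\Omega_{\bmu}(q)_{\mathrm{can}}]$. I would prove the first two more or less directly, the third by intersection theory on the Grassmannian, and then observe that the last two agree.

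\emph{Set equality.} I would apply Theorem~\ref{thm:bigdiagram} with $\bb=n\cdot 0$; since $-(n\cdot 0)=n\cdot 0$ this gives $\nu_n(\mOCh_{\ba,\bmu,n\cdot 0})=\mOGr_{\ba,\bmu,n\cdot 0}$, so it remains to identify $\mOGr_{\ba,\bmu,n\cdot 0}$ with $\Omega_{\bmu}(q)_{\mathrm{can}}$ as subsets of $\Grass_n(\C[x]_{2n})$. This is an unwinding of Definition~\ref{defn:exponents}: the flag $\mc{F}_\bullet(q_i)$ is the osculating flag at $q_i$, so for $C\subset\C[x]_{2n}$ the integers $j$ at which $\dim\big(C\cap\mc{F}_j(q_i)\big)$ jumps are precisely the exponents of $C$ at $q_i$. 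Hence the conditions defining $\mOGr_{\ba,\bmu}$ — that the singular set of $C$ is $\{a_1,\dots,a_k\}$ with the exponents at $a_i$ encoded by the $d_{i,j}$ of (\ref{eq:dij}) — translate verbatim into the Schubert‑cell conditions $C\in\bigcap_i\Omega_{\mu^{(i)}}(\mc{F}(q_i))$, while ``$C$ is canonical with no further singularity'' is exactly $C\in\Grass_n(\C[x]_{2n})_{\mathrm{can}}$. The only delicate point is the transpose/complementation bookkeeping relating (\ref{eq:dij}) to the Schubert labelling of \S\ref{sec:cells}, which is purely combinatorial.

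\emph{$\dim\C[\mOCh_{\ba,\bmu,n\cdot 0}]$.} As a $\C[\h^*]$‑module one has $\nabla(q,\bmu,\mbf{0})\cong\C[\h^*]^{co\,\s_n}\otimes\Ind_{\s_q}^{\s_n}\bmu$, so $\dim_\C\nabla(q,\bmu,\mbf{0})=n!\cdot|\s_n/\s_q|\dim\bmu$. Moreover $\nabla(q,\bmu,\mbf{0})$ is a cyclic $H_n$‑module: if $u$ generates $\Ind_{\s_q}^{\s_n}\bmu$ over $\C\s_n$, then $H_n\cdot(1\otimes u)$ contains $\C\s_n\cdot(1\otimes u)=1\otimes\Ind_{\s_q}^{\s_n}\bmu$, and then, applying $\C[\h^*]$, all of $\C[\h^*]^{co\,\s_n}\otimes\Ind_{\s_q}^{\s_n}\bmu$. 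Since $H_n$ is Azumaya over $Z_n$ of PI‑degree $n!$ and $\cX_n$ is smooth, at each of the finitely many closed points $x$ in the support one has $(H_n)_x\cong\Mat_{n!}\big((Z_n)_x\big)$, and under this Morita equivalence the localisation $\nabla(q,\bmu,\mbf{0})_x$ corresponds to a cyclic $(Z_n)_x$‑module $N_x$; thus $N_x\cong(Z_n)_x/\ann N_x$ and $\dim_\C(Z_n/\ann)_x=\dim_\C N_x=\tfrac1{n!}\dim_\C\nabla(q,\bmu,\mbf{0})_x$. Summing over $x$ gives $\dim\C[\mOCh_{\ba,\bmu,n\cdot 0}]=\tfrac1{n!}\dim_\C\nabla(q,\bmu,\mbf{0})=|\s_n/\s_q|\dim\bmu$.

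\emph{$\dim\C[\Omega_{\bmu}(q)_{\mathrm{can}}]$, and the obstacle.} Because $\Grass_n(\C[x]_{2n})_{\mathrm{can}}$ is the disjoint union of the open‑and‑closed cells $\OmGr_\lambda$, $\lambda\vdash n$, one has $\Omega_{\bmu}(q)_{\mathrm{can}}=\bigsqcup_{\lambda\vdash n}\big(\Omega_{\bmu}(q)\cap\OmGr_\lambda\big)$ scheme‑theoretically, and each summand is the scheme‑theoretic intersection of the Schubert cells $\Omega_{\overline\lambda}(\mc{F}(\infty)),\Omega_{\mu^{(1)}}(\mc{F}(q_1)),\dots,\Omega_{\mu^{(k)}}(\mc{F}(q_k))$ with respect to the osculating flags at the pairwise distinct points $\infty,q_1,\dots,q_k$ of $\mathbb{P}^1$. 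One must first check that each such intersection is $0$‑dimensional, i.e.\ proper, which is a general‑position statement about osculating flags; its length is then evaluated by Schubert calculus in $H^*(\Grass_n(\C[x]_{2n}))$, and the resulting sum is matched — via the Littlewood–Richardson description of induced representations of symmetric groups — with $\dim\Ind_{\s_q}^{\s_n}\bmu=|\s_n/\s_q|\dim\bmu$. This last step, namely controlling the (in general non‑reduced) length of a multi‑Schubert intersection for osculating flags and identifying it with the representation‑theoretic count, is the real obstacle; it is exactly where the osculating‑flag input of \cite{MTVDuality} (and the references therein) enters, and its scheme‑theoretic strengthening is precisely Assumption~\ref{assume:schubert}.
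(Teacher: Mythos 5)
Your treatment of the set equality and of the first dimension count is sound. For the set equality you follow the same route as the paper: apply Theorem~\ref{thm:bigdiagram} with $\bb=n\cdot 0$ and then unwind Definition~\ref{defn:exponents} against the Schubert conditions on the osculating flags $\mc{F}_\bullet(q_i)$; the combinatorial bookkeeping you flag is exactly what (\ref{eq:dij}) was designed to absorb. For $\dim\C[\mOCh_{\ba,\bmu,n\cdot 0}]$ you take a genuinely different, and valid, route. The paper invokes Theorem~1.2 of \cite{End}, which asserts directly that $e\nabla(q,\bmu)$ (with $e$ the trivial idempotent) is a free $\C[\h^*]^{\s_n}$-module whose rank computes the sought dimension, and then computes that rank via $e\nabla(q,\bmu)\simeq e_q(\C[\h^*]\otimes\bmu)$. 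You instead argue from scratch: cyclicity of $\nabla(q,\bmu,\mbf{0})$ over $H_n$, Azumaya-Morita localisation at each point of the finite support, and a $1/n!$ dimension count. Both work; yours is more self-contained but relies on the smoothness of $\cX_n$ and the constancy of the PI-degree, which the paper sidesteps by citing the earlier result.

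The gap is in the third dimension count, $\dim\C[\Omega_{\bmu}(q)_{\mathrm{can}}]$, which you sketch and then declare to be ``the real obstacle,'' tying it to Assumption~\ref{assume:schubert}. That diagnosis is wrong on both counts. The Assumption is about matching the \emph{scheme structure} on $\nu_n(\mOCh_{\ba,\bmu,n\cdot 0})$ with that of $\Omega_{\bmu}(q)_{\mathrm{can}}$; it plays no role in computing $\dim\C[\Omega_{\bmu}(q)_{\mathrm{can}}]$ as an intrinsic quantity. And the properness/length issue you flag is not actually an open obstacle here: the set-theoretic equality you have already proven identifies $\Omega_{\bmu}(q)_{\mathrm{can}}$ as a set with $\nu_n(\mOCh_{\ba,\bmu,n\cdot 0})$, which is finite because it is the support of a finite-dimensional $Z_n$-module, so the scheme-theoretic intersection $\Omega_{\bmu}(q)\cap\OmGr_\lambda$ is automatically zero-dimensional, hence of the expected dimension. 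The paper then checks that the osculating flags $\mc{F}_\bullet(b)$, $b\in\C\cup\{\infty\}$, are pairwise transverse, and since Schubert varieties in a Grassmannian are Cohen--Macaulay and the intersection is proper, the length of the intersection equals the cup product
$$
[\Omega_{\mu^{(1)}}(\mc{F}(q_1))]\cdots[\Omega_{\mu^{(k)}}(\mc{F}(q_k))]\cdot[\Omega_{\overline{\lambda}}(\mc{F}(\infty))]
$$
in $H^*(\Grass_n(\C[x]_{2n}))$. Summing over $\lambda\vdash n$ and applying duality plus the Littlewood--Richardson rule gives $\sum_{\lambda}\langle\sigma_{\mu^{(1)}}\cdots\sigma_{\mu^{(k)}},\sigma_\lambda\rangle=\dim\Ind_{\s_q}^{\s_n}\bmu$. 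None of this requires the input from \cite{MTVDuality} or the Assumption; you should carry this argument out rather than defer it.
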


\begin{proof}
A point $V \in \Grass_n(\C[x]_{2n})$ belongs to $\Omega_{\mu^{(i)}}(q_i)$ if and only if $q_i$ is a singular point of $V$ such that the exponents of $V$ at $q_i$ are encoded by $\mu^{(i)}$. On the other hand, Theorem \ref{thm:bigdiagram} implies that $\nu_n(\mOCh_{\ba,\bmu,n \cdot 0})$ is the set of all \textit{canonical} homogeneous spaces of quasi-exponentials with exponents prescribed by $q$ and $\bmu$ contained in $\Grass_n(\C[x]_{2n})$. Every space in $\Grass_n(\C[x]_{2n})$ is obvious homogeneous. Therefore $\nu_n(\mOCh_{\ba,\bmu,n \cdot 0})$ is the intersection of $\Omega_{\bmu}(q)$ with $\Grass_n(\C[x]_{2n})_{\mathrm{can}}$, which by definition is $\Omega_{\bmu}(q)_{\mathrm{can}}$. 

Theorem 1.2 of \cite{End} implies that $\dim \C[\mOCh_{\ba,\bmu,n \cdot 0}]$ equals the rank of $e \nabla (q,\bmu)$ as a free $\C[\h^*]^{\s_n}$-module. As a $\C[\h^*]^{\s_n}$-module, 
$$
e \nabla (q,\bmu) \simeq e ( \C[\h^*] \o \Ind_{\s_q}^{\s_n} \bmu) \simeq e (\Ind_{\s_q}^{\s_n} (\C[\h^*] \o \bmu)) \simeq e_q (\C[\h^*] \o \bmu),
$$
where $e_q$ is the trivial idempotent in $\C \s_{q}$. This implies that the rank of $e \nabla (q,\bmu)$ equals $|\s_n / \s_q | \dim \bmu$.  

Recall that two complete flags $\mc{F}_{\bullet}$ and $\mc{G}_{\bullet}$ in $\C[x]_{2n}$ are said to be transverse if $\dim \mc{F}_i \cap \mc{G}_j  = \min \{ i + j - 2n, 0 \}$ for all $i,j$. Let $b \neq c \in \C \cup \{ \infty \}$. Then, it is easy to check that the flags $\mc{F}_{\bullet}(b)$ and $\mc{F}_{\bullet}(c)$ are transverse. Hence, the flags appearing in the intersection $\Omega_{\bmu}(q)$ are pairwise transverse. They are also transverse to $\Omega_{\overline{\lambda}}(\mc{F}(\infty))$ for each partition $\lambda$ of $n$. As noted above, $\Grass_n(\C[x]_{2n})_{\mathrm{can}} = \bigsqcup_{\lambda \vdash n} \Omega_{\overline{\lambda}}(\mc{F}(\infty))$. Since the set-theoretic intersection $\Omega_{\bmu}(q) \cap \Omega_{\overline{\lambda}}(\mc{F}(\infty))$ consists of finitely many points (notice that $\dim \Omega_{\mu^{(i)}}(\mc{F}(q_i)) = n^2 - |\mu^{(i)}|$ and $\dim \Omega_{\overline{\lambda}}(\mc{F}(\infty)) = |\lambda|$, hence if $\sum_{i} \dim \mu^{(i)} = n$ and $\lambda \vdash n$, then $\dim \Omega_{\bmu}(q) \cap \Omega_{\overline{\lambda}}(\mc{F}(\infty)) = 0$) the transeverality condition implies that 
$$
[\Omega_{\mu^{(1)}}(\mc{F}(q_1))] \cdots [\Omega_{\mu^{(k)}}(\mc{F}(q_k))] \cdot [\Omega_{\overline{\lambda}}(\mc{F}(\infty))]
$$
is some multiple of the identity in the cohomology ring $H^{*}(\Grass_0(\C[x]_{2n}))$, where $[X] \cdot [Y]$ denotes multiplication in $H^{*}(\Grass_0(\C[x]_{2n}))$ of the classes defined by the \textit{closures} of the locally closed subvarieties $X,Y$ of $\Grass_0(\C[x]_{2n})$. Thus, 
$$
\dim \C[\Omega_{\bmu}(q)_{\mathrm{can}}] = \sum_{\lambda \vdash n}  [\Omega_{\mu^{(1)}}(\mc{F}(q_1))] \cdots [\Omega_{\mu^{(k)}}(\mc{F}(q_k))] \cdot [\Omega_{\overline{\lambda}}(\mc{F}(\infty))].  
$$
Let $\sigma_{\lambda} = [\Omega_{\lambda}(\mc{F}(b))] = [\Omega_{\lambda}(\mc{F}(\infty))]$ be the class of a Schubert cell in $H^{*}(\Grass_0(\C[x]_{2n}))$. They form a basis of $H^{*}(\Grass_0(\C[x]_{2n}))$ such that $\sigma_{(n,\ds,n)} = 1$. Let $\langle - , - \rangle$ be the non-degenerate pairing on $H^{*}(\Grass_0(\C[x]_{2n}))$ defined by letting $\langle [X] , [Y] \rangle$ be the coefficient of $1$ in the expansion of $[X] \cdot [Y]$ in terms of the basis $\{ \sigma_{\lambda} \}$. The duality theorem, \cite[page 149]{FultonYOungTableaux}, says that $\langle \sigma_{\lambda} , \sigma_{\overline{\mu}} \rangle = \delta_{\lambda,\mu}$. Thus, Schubert calculus implies that 
\begin{align*}
\dim \C[\Omega_{\bmu}(q)_{\mathrm{can}}] & = \sum_{\lambda \vdash n}  \langle \sigma_{\mu^{(1)}} \cdots \sigma_{\mu^{(k)}} , \sigma_{\lambda} \rangle \\
 & = \sum_{\lambda \in \Irr (\s_n)} \Hom_{\C \s_n}(\lambda, \Ind_{\s_q}^{\s_n} \bmu) = \dim  \Ind_{\s_q}^{\s_n} \bmu,
\end{align*}
as required. 
\end{proof}

\begin{thm}\label{thm:intersectassume}
Under assumption \ref{assume:schubert}, the map $\nu_n$ induces an isomorphism of Gorenstein rings 
$$
Z(0,q,\blambda,\bmu) \simeq \C[\OmGr_{0,\lambda,-\ba} \cap \Omega_{\bmu}(q)],
$$
and $\Hom_{\H_{n}}(\nabla (q,\bmu,\mbf{0}),\Verma (0,\blambda,\ba))$ is the coregular representation as a $\ZH (0,q,\lambda,\bmu)$-module. 
\end{thm}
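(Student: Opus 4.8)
The plan is to transport the whole computation through the Morita equivalence supplied by the Azumaya property of $\H_n$, and then to recognise the resulting $\ZH_n$-module as the Matlis dual of a zero-dimensional local complete intersection in $\Grass_n(\C[x]_{2n})$; self-duality of such a ring will simultaneously yield the Gorenstein property, the ring isomorphism, and the coregular $\simeq$ regular statement.

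\emph{Morita reduction.} Because $\H_n$ is Azumaya over $\ZH_n$ and every simple $\H_n$-module is the regular representation of $\s_n$, the trivial idempotent $e=\frac{1}{n!}\sum_{\sigma\in\s_n}\sigma$ is full, $e\H_n e=\ZH_n$, and $M\mapsto eM$ is a Morita equivalence $\Lmod{\H_n}\rightsim\Lmod{\ZH_n}$. So I would start from $\Hom_{\H_n}(\nabla(q,\bmu,\mbf 0),\Verma(0,\blambda,\ba))\cong\Hom_{\ZH_n}(e\nabla(q,\bmu,\mbf 0),e\Verma(0,\blambda,\ba))$. By Theorem~1.2 of \cite{End} (together with the Fourier automorphism of Lemma~\ref{lem:Fofix}, which exchanges the roles of $\C[\h]^{\s_n}$ and $\C[\h^*]^{\s_n}$), the modules $e\Verma(0,\blambda)$ and $e\nabla(q,\bmu)$ are finite and free over $\C[\h]^{\s_n}$, resp. $\C[\h^*]^{\s_n}$; since depth is insensitive to a finite ring extension, they are maximal Cohen--Macaulay, hence locally free, over the regular rings $\C[\OmCh_{0,\blambda}]$ and $\C[\mOCh_{\ba,\bmu}]$ (both $\cong\C[\A^n]$ by \cite{End}), and comparison with the dimension count of Lemma~\ref{lem:dimcount1} forces their rank to be $1$. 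Thus $e\Verma(0,\blambda)$ and $e\nabla(q,\bmu)$ are the structure sheaves of their (smooth) supports, and after quotienting by $\mf m_\ba$ and $\mf n_{n\cdot 0}$,
$$
M:=\Hom_{\H_n}(\nabla(q,\bmu,\mbf 0),\Verma(0,\blambda,\ba))\;\cong\;\Hom_{\ZH_n}\!\big(\C[\mOCh_{\ba,\bmu,n\cdot 0}],\,\C[\OmCh_{0,\blambda,\ba}]\big).
$$

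\emph{Self-duality over a Gorenstein Artinian ring.} Next I would observe that $\varpi|_{\OmCh_{0,\blambda}}\colon\A^n\to\h/\s_n$ is finite (its coordinate ring acts faithfully on a finite $\C[\h]^{\s_n}$-module, by the previous step) with zero-dimensional fibres, hence flat by miracle flatness; as $\A^n$ and $\h/\s_n$ are Gorenstein, $A:=\C[\OmCh_{0,\blambda,\ba}]$ is an Artinian Gorenstein — in particular self-injective — ring. Only the part of $\mOCh_{\ba,\bmu,n\cdot 0}$ lying in the component $\OmCh_{0,\blambda}$ of $\pi^{-1}(n\cdot 0)=\bigsqcup_\mu\OmCh_{0,\mu}$ contributes to $M$; writing $X:=\OmCh_{0,\blambda}\cap\mOCh_{\ba,\bmu}$ for this scheme-theoretic intersection and $\C[X]=A/\mf b$, self-injectivity of $A$ gives $M\cong\Hom_A(A/\mf b,A)=\mathrm{Ann}_A(\mf b)\cong(A/\mf b)^\vee=\C[X]^\vee$, the coregular representation of $\C[X]$. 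Taking $\ZH_n$-annihilators and using $\mathrm{Ann}_A(\mathrm{Ann}_A(\mf b))=\mf b$ (valid over Gorenstein Artinian rings) yields $\mathrm{Ann}_{\ZH_n}(M)=I_X$, so $Z(0,q,\blambda,\bmu)=\ZH_n/\mathrm{Ann}_{\ZH_n}(M)=\C[X]$.

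\emph{Identifying $X$ and the complete-intersection argument.} By Proposition~\ref{prop:wronskianfiber}, Remark~\ref{rem:someiso12} and Assumption~\ref{assume:schubert}, $\nu_n$ identifies $X$ with the scheme-theoretic intersection $\OmGr_{0,\blambda^t,-\ba}\cap\Omega_{\bmu}(q)$ in $\Grass_n(\C[x]_{2n})$ (matching the ring in the statement), which gives the asserted ring isomorphism; it remains to see $\C[X]$ is Gorenstein. For this I would argue geometrically: inside the smooth affine space $\OmGr_{0,\blambda^t}\cong\A^n$ (on which the Wronskian condition cutting out $-\ba$ is automatic, by the definitions), $X$ is cut out by the restrictions of the ideals of the Schubert cells $\Omega_{\mu^{(i)}}(\mc F(q_i))$, each of which is a \emph{smooth} locally closed subvariety of $\Grass_n(\C[x]_{2n})$ of codimension $n_i=|\mu^{(i)}|$; since $\sum_i n_i=n=\dim\OmGr_{0,\blambda^t}$ and $X$ is zero-dimensional (Lemma~\ref{lem:dimcount1}), these local equations form a regular sequence in the regular local rings of $\OmGr_{0,\blambda^t}$ at the points of $X$. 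Hence $X$ is a zero-dimensional local complete intersection, so $\C[X]$ is Artinian Gorenstein, $\C[X]^\vee\cong\C[X]$, and $M$ is the regular ($\simeq$ coregular) representation of $Z(0,q,\blambda,\bmu)\cong\C[\OmGr_{0,\blambda^t,-\ba}\cap\Omega_{\bmu}(q)]$. I expect the main obstacle to be the Morita step — namely extracting from \cite{End} that $e\Verma(0,\blambda)$ and $e\nabla(q,\bmu)$ are genuinely the structure sheaves of their supports, and carrying out the bookkeeping so that $M$ is identified with the relevant $\Hom$ \emph{as a $\ZH_n$-module}; a secondary point requiring care is checking that the scheme structure on $\OmGr_{0,\blambda^t}\cap\Omega_{\bmu}(q)$ inherited from $\cX_n$ under Assumption~\ref{assume:schubert} really coincides with the scheme-theoretic intersection of the Schubert cells, so that the complete-intersection argument applies verbatim.
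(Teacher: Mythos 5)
Your overall strategy — Morita reduction, then identification of the $\Hom$ space as a Matlis dual over a Gorenstein Artinian quotient, then self-duality of that quotient — matches the paper's, and most of your steps are sound. The Morita reduction and the identification of $e\Verma(0,\blambda,\ba)\cong\ZH_n/J$, $e\nabla(q,\bmu,\mbf 0)\cong\ZH_n/I$ as cyclic modules both appear in the paper's proof (the latter is quoted directly from Theorem 1.2 of the reference \cite{End}, so your extra maximal-Cohen--Macaulay/rank-one detour, while valid, is not needed). Your self-injectivity argument — that $A:=\ZH_n/J$ is Artinian Gorenstein by miracle flatness along $\varpi|_{\OmCh_{0,\blambda}}:\A^n\to\h/\s_n$, so that $\Hom_A(A/\overline{I},A)=\mathrm{Ann}_A(\overline{I})\cong(A/\overline{I})^\vee$ and $\mathrm{Ann}_A(\mathrm{Ann}_A(\overline{I}))=\overline{I}$ — is a clean, self-contained replacement for the paper's citation of \cite[Lemma~3.8]{SchubertGLN}, and it correctly pins down the annihilator $I+J$ and hence $Z(0,q,\blambda,\bmu)$.

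The one genuine gap is in your complete-intersection argument for the Gorensteinness of $\C[X]$. The scheme $X$ is, via $\nu_n$, $\Wr^{-1}(-\ba)\cap\OmGr_{\lambda^t}\cap\Omega_{\bmu}(q)$: it carries the Wronskian equations and the Schubert-cell equations simultaneously, which is $2n$ local equations on the $n$-dimensional cell $\OmGr_{\lambda^t}\cong\A^n$, not $n$. Your parenthetical that the Wronskian condition "is automatic, by the definitions" is a scheme-theoretic claim, not a set-theoretic one, and it does not follow from the definitions: while it is true that every closed point of $\OmGr_{\lambda^t}\cap\Omega_{\bmu}(q)$ has Wronskian $\prod(x-q_i)^{n_i}$, two ideals can share a zero set without being equal (compare $(t^2)$ and $(t^3)$ in $\C[t]$), so one cannot simply drop the Wronskian equations from the ideal. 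What one actually needs is that the Wronskian coefficients, as regular functions, lie in the ideal generated by the restricted Schubert-cell equations — equivalently, that $\OmGr_{\lambda^t}\cap\Omega_{\bmu}(q)$ is a union of connected components of the Gorenstein scheme $\Wr^{-1}(-\ba)\cap\OmGr_{\lambda^t}$. This is precisely the content of the cited \cite[Lemma~4.3]{SchubertGLN}; the paper invokes it rather than reproving it, and your argument would need to supply that step before the regular-sequence/Krull count can be applied as you describe.
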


\begin{proof}
The Morita equivalence between $\H_n$ and $\ZH_n$ implies that 
$$
\Hom_{\H_{n}}(\nabla (q,\bmu,0),\Verma (0,\lambda,\ba)) \simeq \Hom_{\ZH_n}(e \nabla (q,\bmu,0), e \Verma (0,\lambda,\ba)).
$$
Let $I$ be the annihilator of $e \nabla (q,\bmu,0)$ in $\ZH_n$ and $J$ the annihilator of $e \Verma (0,\lambda,\ba)$. Theorem 1.2 of \cite{End} implies that $e \nabla (q,\bmu,0) \simeq \ZH_n / I$ and $e \Verma (0,\lambda,\ba) \simeq \ZH_n / J$ are cyclic $\ZH_n$-modules. By Proposition \ref{prop:wronskianfiber}, $\Spec \ZH_n / J = \OmCh_{0,\lambda,\ba}$ is isomorphic to $\Wr^{-1}(-\ba) \cap \OmGr_{\lambda} = \OmGr_{0,\lambda,-\ba}$. Using assumption \ref{assume:schubert}, we have $\Spec \ZH_n / I = \mOCh_{\ba,\bmu,n \cdot 0} \simeq \Omega_{\bmu}(q)_{\mathrm{can}}$. Therefore, \cite[Lemma 4.3]{SchubertGLN} implies that
$$
\ZH_n / (I + J) \simeq \C[\Wr^{-1}(-\ba) \cap \OmGr_{\lambda} \cap \Omega_{\bmu}(q)_{\mathrm{can}}] = \C[\Wr^{-1}(-\ba) \cap \OmGr_{\lambda} \cap \Omega_{\bmu}(q)]
$$
is a Gorenstein ring. This proves the first statement of the theorem. The result \cite[Lemma 3.8]{SchubertGLN} states that:   

\begin{claim}
Let $Z$ be a commutative ring and $I,J$ ideals of $Z$ such that
\begin{itemize}
\item $\dim Z/I, Z/J < \infty$,
\item $Z/J$ and $Z/ I + J$ are Gorenstein. 
\end{itemize}
Let $\overline{I} = I + J$ in $Z/J$. Then, $\ker \overline{I} \simeq (Z / (I + J))^*$ as $Z / (I + J)$-modules. 
\end{claim}

Applying the above claim in our case, it suffices to identify $\ker \overline{I}$ with 
$$
\Hom_{\H_{n}}(\nabla (q,\bmu,0),\Verma (0,\lambda,\ba)) = \Hom_{\ZH_n}(\ZH_n / J , \ZH_n / I). 
$$
This is straight-forward. 
\end{proof}

\begin{remark}
The claim about dimensions made after Corollary \ref{cor:intersect} can be deduced from the proof of Lemmata \ref{lem:dimcpount} and \ref{lem:dimcount1}. Also, $\Hom_{\H_{n}}(\nabla (q,\bmu,0),\Verma (0,\lambda,\ba)) = \Hom_{\H_{n}}(\nabla (q,\bmu),\Verma (0,\lambda,))$, which implies that Corollary \ref{cor:intersect} is equivalent to the statement of Theorem \ref{thm:intersectassume}. 
\end{remark}

\section{The relative Grassmanian}\label{sec:relGrass}

\subsection{} In this final section we make some basic remarks about the relative Grassmanian, and its relationship to the Calogero-Moser space. As noted in \cite{CMFix}, one can interpreter Wilson's embedding $\beta_n$ as an embedding of $\CM_n$ into 
$$
\Grel_n := \{ (I,W) \ | \ I \lhd \C[z] \textrm{ with $\dim \C[z] / I = n$ and $W \subset \C[z] / I^2$ an $n$-dimensional subspace.} \},
$$
the \textit{relative Grassmaniann}. Since both $\CM_n$ and $\Grel_n$ are quasi-projective varieties, it is natural to expect that Wilson's embedding is a morphism of varieties. In this subsection we suggest one way that one might hope to show this. Projection onto $I$ defines a proper map $\Grel_n \rightarrow \mathbb{A}^{(n)} = \mathrm{Hilb}^n (\C)$. Let $E$ be the rank $2n$ vector bundle on $\mathbb{A}^{(n)}$, whose fiber over $I$ is $\C[z] / I^2$. Recall, \cite[Example 2.2.3]{HuybrechtsLehn}, that the relative Grassmanian is the space that represents the contravariant functor $F : \mathrm{Sch}_{\mathbb{A}^{(n)}} \rightarrow \mathrm{Sets}$, from the category of schemes over $\mathbb{A}^{(n)}$ to sets defined by 
$$
F(X) = \{ \phi : \xi^* E \twoheadrightarrow \mc{F} \ | \ \mc{F} \textrm{ flat of rank $n$ } \} / \simeq.
$$
where $\xi : X \rightarrow \mathbb{A}^{(n)}$.  

We denote by $R$ the coordinate ring of $\CM_n$. Recall that $\pi : \CM_n \rightarrow \h^* / \s_n$. Let $\mc{E} = \pi^* E$ be the vector bundle of rank $2n$ on $\CM_n$ induced by $E$. Since $\CM_n$ is affine, we consider instead the corresponding projective $R$-module of section, which we will also denote by $\mc{E}$. Since $\mc{E}$ is the pull-back of a projective $\C[\mathbb{A}^{(n)}]$-module, it is actually a free $R$-module. Explicitly,
$$
\mc{E} = R[z] / (\mathrm{det} ( z- Y)^2).
$$

Associated to each space $W \in \Grad$ is the Baker function $\widetilde{\psi}_W(z,x)$, see \cite{WilsonBi} and \cite{Wilson}. Just as for the $\tau$-function, the Baker function distinguishes points in that $\widetilde{\psi}_{W_1}(z,x) = \widetilde{\psi}_{W_2}(z,x)$ if and only if $W_1 = W_2$. If $\Supp( W) = \sum_{i = 1}^k n_i b_i$, then define $\Psi_W(z) = \prod_{i = 1}^k (z - b_i)^{n_i}$. The \textit{regular} Baker function $\psi_W(z,x)$ is defined to be $\Psi_W(z) \widetilde{\psi}_W(z,x)$. We define the \textit{polynomial} Baker function to be 
$$
\psi^{\mathsf{pol}}_W(z,x) = \Wr_W(x) \cdot \psi_W(z,x) = \Psi_W(z) \cdot \Wr_W(x) \cdot \widetilde{\psi}_W(z,x).
$$
It is known, e.g. as a consequence of \cite[Proposition 6.5]{WilsonBi}, that $\psi_W^{\mathsf{pol}}(z,x) = g(z,x) e^{zx}$, where $g(z,x)$ is a polynomial of degree $\deg (W)$ in both $z$ and $x$. The following lemma follows from the description of $\widetilde{\psi}_W$ given in section 4 of \cite{WilsonBi}.

\begin{lem}\label{lem:Wdiff}
Let $W \in \Grad$ and $C = \eta(W) \in \QE$. Then, 
$$
\Psi_W(z) W = \Span \{ (\pa_x^k \psi_W^{\mathsf{pol}}(z,x)) |_{x = 0} \ \textrm{ for all $k \in \N$} \} = C^{\perp}.
$$
\end{lem}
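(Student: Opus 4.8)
The plan is to prove the two displayed equalities separately. The equality $\Psi_W(z)W = C^{\perp}$, where $C^{\perp} = \{f\in\C[z]\ |\ \langle c,f\rangle = 0\ \forall\, c\in C\} = V_C$, is immediate from the construction of $\eta$: since $C = \eta(W)$, Proposition~\ref{prop:commute120} gives $\gamma(C) = W$, i.e. $W = q_C^{-1}V_C$; because $q_C(z) = \prod_{b\in\Supp W}(z-b)^{\deg W_b}$ coincides with $\Psi_W(z)$ (the multiplicities agree since $\dim C_b = \deg W_b$), multiplying through by $\Psi_W(z)$ yields $\Psi_W(z)W = V_C = C^{\perp}$. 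So it remains to identify $C^{\perp}$ with $M := \Span\{\pa_x^k\psi^{\mathsf{pol}}_W(z,x)\big|_{x=0}\ |\ k\in\N\}$.

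The first step for this is to recall from \cite[\S4]{WilsonBi} (see also \cite[Proposition~6.5]{WilsonBi}) the precise form of the polynomial Baker function: writing $\psi^{\mathsf{pol}}_W(z,x) = g(z,x)e^{zx}$, the polynomial $g$ is obtained by applying to $e^{zx}$ the order-$n$ differential operator $A_x$ in the variable $x$ that is characterised by $A_x C = 0$ and has leading coefficient $\Wr_C(x)$ (Crum--Darboux: $A_x = \Wr_C(x)L_x$ with $L_x$ the monic order-$n$ operator whose kernel is $C$), so that $\psi^{\mathsf{pol}}_W(z,x) = A_x e^{zx}$. Setting $v_k := \pa_x^k\psi^{\mathsf{pol}}_W(z,x)\big|_{x=0}$, these are genuine polynomials in $z$ (since $g$ is a polynomial and $e^{zx}|_{x=0}=1$), and $M = \Span\{v_k\}$.

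The second step is a duality computation. For a linear functional $\phi$ on $\C[z]$ put $\Phi_\phi(x) := \sum_{m\ge 0}\phi(z^m)\tfrac{x^m}{m!}\in\C[[x]]$; a short check with the defining pairing shows $\Phi_{\langle\mu,\,\cdot\,\rangle} = \mu$ for every $\mu\in\Quasi$ (this is the identity $\langle\mu,e^{zx}\rangle = \mu(x)$), so $\mu\mapsto\langle\mu,\,\cdot\,\rangle$ embeds $C$ into $\C[z]^{*}$. Applying $\phi$ in the variable $z$ to $\psi^{\mathsf{pol}}_W(z,x) = A_x e^{zx}$ and using that $A_x$ acts only in $x$ and hence commutes with $\phi$ gives
$$
\sum_{k\ge 0}\phi(v_k)\,\frac{x^k}{k!}\;=\;A_x\Phi_\phi(x)\qquad\text{in }\C[[x]].
$$
Thus $\phi$ annihilates $M$ iff $A_x\Phi_\phi = 0$. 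Since $\Wr_C$ is a unit in the field $\C((x))$, the $\C$-space of solutions $u\in\C((x))$ of $A_x u = 0$ coincides with that of the monic order-$n$ operator $L_x$, which by the standard theory of linear ODEs over a differential field has dimension at most $n$; as it contains the $n$ linearly independent elements of a basis of $C$, it equals $C$. Hence $\phi$ annihilates $M$ exactly when $\Phi_\phi\in C$, i.e. exactly when $\phi = \langle\mu,\,\cdot\,\rangle$ for some $\mu\in C$. Therefore the annihilator of $M$ in $\C[z]^{*}$ is precisely the image of $C$, so $M$ equals the annihilator in $\C[z]$ of that image, namely $C^{\perp} = \Psi_W(z)W$ — using that any subspace of a vector space is the annihilator of its annihilator. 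This yields the full chain of equalities.

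The main obstacle is the first step: extracting from \cite[\S4]{WilsonBi} the clean statement that $\psi^{\mathsf{pol}}_W(z,x) = A_x e^{zx}$ for the order-$n$ operator $A_x$ in $x$ annihilating $C$ (equivalently, the Wronskian formula for the Baker function of $W$ in terms of a basis of conditions $C$), together with the bookkeeping that makes $g$ of degree $n$ in $z$ and at most $n$ in $x$. Once that identity is in hand, both the reduction $\Psi_W W = C^{\perp}$ and the passage from ``$\phi$ kills $M$'' to ``$\phi\in C$'' via the finite-dimensional solution space of $L_x$ over $\C((x))$ are formal.
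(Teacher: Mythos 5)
Your proof is correct and follows the route the paper intends: Lemma \ref{lem:Wdiff} is stated in the paper without proof, with only a pointer to \cite[\S 4]{WilsonBi}, and your argument is a careful unpacking of exactly that reference. The identification $\Psi_W(z)W = C^{\perp}$ from $\gamma(\eta(W))=W$ together with $q_C=\Psi_W$ is the right reading of Proposition \ref{prop:commute120}, and the duality step — embedding $C$ into $\C[z]^{*}$ via $\mu\mapsto\langle\mu,\cdot\rangle$, observing that $A_x$ commutes with $\phi$ acting in $z$, bounding $\dim_{\C}\ker_{\C((x))}A_x$ by $n$ so that it must equal $C$, and then invoking biduality of annihilators — is clean and closes the gap that the citation alone leaves open. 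The only thing I would flag is that the crucial input $\psi^{\mathsf{pol}}_W(z,x)=A_x e^{zx}$ (with $A_x$ the Wronskian operator annihilating $C$) is taken on faith from Wilson, as you acknowledge, but this is precisely the dependence the paper itself accepts, so there is no loss of rigor relative to the paper.
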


For each $(X,Y) \in \CM_n$, consider the element $\psi_{(X,Y)}^{\mathsf{pol}} = e^{xz} \det((X - x)(Y - z) - 1) \in R \hat{\o} \C[[x,z]]$. Let $\mc{K}$ be the $R$-submodule of $\mc{E}$ generated by 
$$
\pa_x^0 \psi_{| x = 0}^{\mathsf{pol}}, \pa_x^1 \psi_{| x = 0}^{\mathsf{pol}}, \pa_x^2 \psi_{| x = 0}^{\mathsf{pol}}, \ds
$$
Then, we define $\mc{F}$ to be the quotient $\mc{E} /\mc{K}$. 

\begin{conj}
The quotient $\mc{E} \twoheadrightarrow \mc{F}$ is a vector bundle of rank $n$ on $\CM_n$, inducing a locally closed embedding $\beta_n : \CM_n \rightarrow \Grel_n$. 
\end{conj}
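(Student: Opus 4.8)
The plan is to realise $\beta_n$ as the classifying morphism of the quotient $\mc{E}\twoheadrightarrow\mc{F}$ under the representability of $\Grel_n$ by the functor $F$, then to match this morphism with Wilson's set-theoretic map, and finally to upgrade it to a locally closed embedding.

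\emph{Step 1: $\mc{F}$ is locally free of rank $n$.} Since $\CM_n$ is smooth, hence reduced and irreducible, a coherent sheaf on it is locally free as soon as all of its fibres have the same dimension, so it suffices to show $\dim\bigl(\mc{F}\otimes k(X,Y)\bigr)=n$ for every closed point $(X,Y)$. Put $W=\beta_n(X,Y)$ and $C=\eta(W)\in\QE$, so that $\mc{E}\otimes k(X,Y)=\C[z]/\det(z-Y)^2\C[z]=\C[z]/\Psi_W(z)^2\C[z]$. By the definition of $\psi^{\mathsf{pol}}_{(X,Y)}$ together with Lemma \ref{lem:Wdiff} (which computes the span of its $x$-derivatives at $0$), the fibre $\mc{K}\otimes k(X,Y)$ is the image of $\Psi_W(z)W=C^{\perp}$ in this quotient. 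Now $\codim_{\C[z]}C^{\perp}=\dim C=n$ by Lemma \ref{lem:anequiv}, while $\Psi_W(z)^2\C[z]\subseteq C^{\perp}\subseteq\C[z]$, the left inclusion because $C$, being canonical of dimension $n$, has all its exponents $<2n_i$ at each point of its support (Lemma \ref{lem:Ncan} and the discussion following it). Hence $\dim\bigl(\mc{K}\otimes k(X,Y)\bigr)=2n-n=n$, so $\dim\bigl(\mc{F}\otimes k(X,Y)\bigr)=n$, as required. A locally free sheaf is flat, so $\mc{E}\twoheadrightarrow\mc{F}$ is an element of $F(\CM_n)$, and by representability this is the same datum as a morphism $\beta_n\colon\CM_n\to\Grel_n$.

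\emph{Step 2: identification with Wilson's map.} On closed points the morphism of Step 1 sends $(X,Y)$ to the pair $(I,\overline{W})$ with $I=\det(z-Y)\C[z]=\Psi_W(z)\C[z]$ and $\overline{W}=\Psi_W(z)W/\Psi_W(z)^2\C[z]\subset\C[z]/I^2$. By Lemma \ref{lem:Wdiff} this equals $(I,\,C^{\perp}/I^2)$, which is precisely the description of Wilson's embedding into $\Grel_n$ recalled in \cite{CMFix}. Since $\CM_n$ is reduced (so its closed points are dense) and $\Grel_n$ is separated, the morphism of Step 1 is the unique morphism of schemes inducing that set map; in particular, by Wilson it is injective with image $\Grad_n\subseteq\Grel_n$.

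\emph{Step 3: locally closed embedding.} By Step 2 the map is set-theoretically a bijection onto $\Grad_n$, so the remaining content is scheme-theoretic. The plan is to construct an inverse on the image out of the tautological data on $\Grel_n$: over a point $(I,\overline{W})$ one reads off $Y$ as multiplication by $z$ on the rank-$n$ bundle $\C[z]/I$, and one reconstructs $X$ from $\overline{W}$ by the rank-one procedure underlying Wilson's correspondence; one then checks that the resulting pair satisfies $\rk([X,Y]+1)=1$ and is well defined up to $\PGL_n$-conjugation in families, so that it descends to a morphism from an open neighbourhood of $\Grad_n$ in $\Grel_n$ to $\CM_n$ inverse to $\beta_n$. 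I expect this last point to be the main obstacle: making Wilson's reconstruction of $X$ algebraic and functorial — equivalently, giving the scheme-theoretic image of $\beta_n$ in $\Grel_n$ an intrinsic description and verifying that $\beta_n$ is an isomorphism onto it. An alternative route, which meets the same difficulty, is to show that $d\beta_n$ is everywhere injective, note that the image is constructible, and apply Zariski's Main Theorem once the image is known to be normal; but normality of $\Grad_n\subset\Grel_n$ is not available a priori.
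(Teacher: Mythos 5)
The statement you are attempting to prove is stated as a \emph{conjecture} in the paper (at the end of Section \ref{sec:relGrass}); the paper offers no proof, so there is no argument in the text against which to compare yours. With that understood, here is an assessment of the proposal on its own terms.

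Steps 1 and 2 are a reasonable outline. The rank-$n$ fibre computation in Step 1 requires two facts used implicitly: (i) the pointwise identity $\psi^{\mathsf{pol}}_{(X,Y)} = \psi^{\mathsf{pol}}_{\beta_n(X,Y)}$, which the paper assumes but does not state or cite; and (ii) the inclusion $\Psi_W(z)^2\C[z]\subseteq C^\perp = \Psi_W(z)W$, equivalently $\Psi_W\C[z]\subseteq W$. For $W\in\OmGrad_\lambda$ with $\lambda\vdash n$ this holds because $\lambda$ has at most $n$ parts, so an admissible basis of $W$ stabilises to $\{z^i\}$ for $i\ge n$, and it extends to general $W\in\Grad_n$ by factorisation over the support; but this deserves a direct verification rather than an appeal to Lemma \ref{lem:Ncan}, which concerns the canonical lift in a single fibre rather than the containment in question. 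Granting (i) and (ii), the constant-fibre-rank argument over the reduced, irreducible $\CM_n$ is correct, $\mc{F}$ is locally free of rank $n$, and representability produces a morphism $\CM_n\to\Grel_n$; Step 2 then identifies it with Wilson's set-theoretic map on closed points, which is fine since $\CM_n$ is reduced and $\Grel_n$ separated.

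Step 3 is where the conjecture actually lives, and you are candid that you have not closed it. A morphism of schemes that is injective on closed points is in general far from being a locally closed embedding, and the two escape routes you name — an algebraic, functorial inverse reconstructing $(X,Y)$ from $(I,\overline{W})$ on an open neighbourhood of the image, or injectivity of $d\beta_n$ together with Zariski's main theorem on a target known to be normal — are precisely the missing content, not minor finishing touches. Since you stop short of either, the argument reduces the conjecture to a well-posed sub-problem but does not prove it; the statement remains open in the paper and in your proposal alike.
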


\begin{remark}
The definition of $\CM_n$ as a G.I.T. quotient implies that there is a ``tautological'' rank $n$ bundle on the space. It is unclear to the author how this tautological bundle is related to $\mc{F}$. 
\end{remark}

Expanding, $\psi_W^{\mathsf{pol}}(z,x)e^{-zx} = \sum_{i, j = 0}^n a_{i,j} z^i x^j$, we write $D_W = \sum_{i,j = 0}^n a_{i,j} x^j \pa_x^i$.

\begin{lem}
Let $W \in \Grad$. Then, $C = \eta(W) \in \QE$ is the space of all holomorphic solutions of the differential equation $D_W$. 
\end{lem}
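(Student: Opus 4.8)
The plan is to prove the two inclusions of the claimed equality separately: that $C$ is contained in the space of solutions of $D_W$ is where the work lies, and the reverse inclusion then follows from a dimension count. Throughout I write $g(z,x):=\psi_W^{\mathsf{pol}}(z,x)e^{-zx}=\sum_{i,j=0}^n a_{i,j}z^ix^j$, which, as recalled just before the lemma, is a polynomial of degree exactly $n=\deg W$ in each of the variables $z,x$; in particular the coefficient $\sum_j a_{n,j}x^j$ of $z^n$ in $g$ is a nonzero polynomial in $x$.

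First I would establish a ``bispectral'' adjunction and then feed in the Baker function. Let $\widehat{D}_W:=\sum_{i,j}a_{i,j}\,z^i\partial_z^j$ act on $\C[z]$. Using only the two identities $\langle x\cdot\phi,f\rangle=\langle\phi,\partial_z f\rangle$ and $\langle\partial_x\cdot\phi,f\rangle=\langle\phi,zf\rangle$ (for $\phi\in\Quasi$, $f\in\C[z]$), one moves the factors $x^j$ and then the operators $\partial_x^i$ across the pairing to obtain $\langle D_W\phi,f\rangle=\langle\phi,\widehat{D}_W f\rangle$; note that $D_W$ preserves $\Quasi$, so $D_W\phi$ is again a genuine functional. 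Now apply $\widehat{D}_W$ in the $z$-variable to $e^{zx}=\sum_{m\ge0}\frac{x^m}{m!}z^m$: this gives $\widehat{D}_W e^{zx}=g(z,x)e^{zx}=\psi_W^{\mathsf{pol}}(z,x)$, and comparing the coefficient of $x^m$ with $\psi_W^{\mathsf{pol}}(z,x)=\sum_{k\ge0}\frac{x^k}{k!}\bigl(\partial_x^k\psi_W^{\mathsf{pol}}(z,x)\bigr)|_{x=0}$ yields $\widehat{D}_W(z^m)=\bigl(\partial_x^m\psi_W^{\mathsf{pol}}(z,x)\bigr)|_{x=0}$ for all $m$. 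By Lemma \ref{lem:Wdiff} these span $\Psi_W(z)W=C^{\perp}$, so $\widehat{D}_W(\C[z])\subseteq C^{\perp}$; hence for $c\in C$ and every $f\in\C[z]$ we get $\langle D_W c,f\rangle=\langle c,\widehat{D}_W f\rangle=0$, and since $\Quasi$ is a space of linear functionals on $\C[z]$ this forces $D_W c=0$. Thus every element of $C$ is a holomorphic (in fact quasi-exponential) solution of $D_W$.

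For the reverse inclusion I would argue by dimension. Since its leading coefficient $\sum_j a_{n,j}x^j$ is a nonzero polynomial, $D_W$ is a linear ODE of order $n$; at any $x_0\in\C$ where this coefficient is nonzero the space of germs of holomorphic solutions at $x_0$ is $n$-dimensional, and restriction to such a germ is injective on functions holomorphic on $\C$, so the space of holomorphic solutions of $D_W$ has dimension at most $n$. As $C\in\QE_n$ is $n$-dimensional and is contained in this space by the previous step, the two coincide.

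The main obstacle I anticipate is bookkeeping rather than conceptual: getting the order of the factors $x^j$ versus $\partial_x^i$ right when transposing $D_W$ to $\widehat{D}_W$ across the pairing, and justifying the termwise application of $\widehat{D}_W$ to the series for $e^{zx}$ — the latter being harmless because for fixed $z$ both sides are entire in $x$ while $\widehat{D}_W$ acts polynomially in $z$. Everything else is formal, resting only on Lemma \ref{lem:Wdiff} and the structural properties of the pairing $\langle-,-\rangle$.
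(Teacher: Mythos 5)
Your proof is correct and takes essentially the same approach as the paper: both hinge on the identity identifying $(\partial_x^k \psi_W^{\mathsf{pol}})|_{x=0}$ with $\widehat{D}_W(z^k)$ (you name the adjoint operator explicitly, the paper unwinds the same computation via $(\partial_x^k x^j e^{xz})|_{x=0} = \partial_z^j(z^k)$), then invoke Lemma \ref{lem:Wdiff} and conclude by a dimension count. Your write-up is slightly cleaner in isolating the one-sided inclusion plus dimension argument, rather than phrasing the pairing step as an equivalence, but the underlying argument is identical.
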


\begin{proof}
By Lemma \ref{lem:Wdiff}, $\Psi_W(z) W = C^{\perp}$, which equals $\Span \{ (\pa_x^k \psi_W^{\mathsf{pol}}(z,x)) |_{x = 0} \ | \ k = 0,1,\ds \}$. We apply the easy identity $(\pa_x^k x^j e^{xz}) |_{x = 0} = \pa_z^j(z^k)$. Thus, $c \in C$ if and only if 
\begin{align*}
\langle c, (\pa_x^k \psi_W(z,x)) |_{x = 0} \rangle  & = \left\langle c, \sum_{i,j = 0}^n a_{i,j} z^i (\pa_x^k (x^j) e^{zx}) |_{x = 0} \right\rangle \\
 & = \left\langle c, \sum_{i,j = 0}^n a_{i,j} z^i \pa_z^j(z^k) \right\rangle = \left\langle \sum_{i,j = 0}^n a_{i,j} x^j \pa_x^i c, z^k \right\rangle = 0
\end{align*}
for all $k \in \mathbb{N}$. This implies that $\sum_{i,j = 0}^n a_{i,j} x^j \pa_x^i c = 0$. Since the dimension of $C$ is $n$, $C$ contains all solutions of the differential equation $D_W$. 
\end{proof}

If $g(x)$ is a polynomial and $p \neq 0$, then the function $e^{p x} g(x)$ has an irregular singularity of order one at infinity. Thus, if $D$ is an $n$th order differential equation whose space of solutions is $C \in \QGr$ then $D$ has only regular singularities in $\C$ and (at worst) an irregular singularity at $\infty$ of order one. Moreover, the residue of $D$ at $\infty$ is $\Supp (C) \in \h^* / \s_n$. Recall that $D$ is said to be \textit{Fuchsian} if it has only regular singularities i.e. if and only if $\Supp (C) = 0$.  Given a simple $H_n$-module $L$, we write $D_L$ for the $n$th order differential equation whose space of solutions equals $\nu_n(\chi_L) \in \QGr$. 

\begin{cor}
Let $L$ be a simple $H_n$-module. Then, the differential equation $D_L$ is Fuchsian if and only if $\C[\h^*]_+^{\s_n} \cdot L = 0$. 
\end{cor}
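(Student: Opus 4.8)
The plan is to reduce the corollary to the compatibility of $\nu_n$ with the support maps, a compatibility that has effectively been checked already through the various factorization diagrams.

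First recall from the paragraph immediately preceding the corollary that an $n$th-order differential operator $D$ whose solution space is a point $C\in\QGr_n$ has only regular singular points in $\C$, together with an at-worst order-one irregular singularity at $\infty$ whose residue is $\Supp(C)\in\h^*/\s_n$; hence $D$ is Fuchsian if and only if $\Supp(C)=0$. Applying this with $D=D_L$ and $C=\nu_n(\chi_L)\in\QE_n\subset\QGr_n$, the corollary becomes the statement that $\Supp(\nu_n(\chi_L))=0$ in $\h^*/\s_n$ if and only if $\C[\h^*]_+^{\s_n}\cdot L=0$.

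Next I would identify $\Supp\circ\nu_n$ with the morphism $\pi:\cX_n\to\h^*/\s_n$. Recall $\nu_n=\eta\circ\beta_n\circ\psi_n$. Diagram (\ref{eq:diagramt}) shows that $\psi_n$ intertwines the two maps denoted $\pi$, i.e. $\pi\circ\psi_n=\pi$; diagram (\ref{prop:somefactor}) gives $\Supp\circ\beta_n=\pi$ on $\CM_n$; and Proposition \ref{prop:commute120}, together with the definitions of the support maps on $\Graad_n$, $\Grad_n$ and $\QE_n$, shows that $\eta$ intertwines the support map on $\Grad_n$ with that on $\QE_n$. Indeed, if $\{W_b\}\in\Graad_n$ then $\eta(\{W_b\})=\bigoplus_b C_b$ with $\dim C_b=\deg W_b$, so $\Supp(\eta(\{W_b\}))=\sum_b\deg(W_b)\cdot b=\Supp(\{W_b\})$, while $\Supp$ on $\Grad_n$ is by definition transported from $\Graad_n$ through the embedding $i$. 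Composing the three identities yields $\Supp\circ\nu_n=\pi$.

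Finally, $\pi$ is by construction the morphism $\Spec Z_n\to\Spec\C[\h^*]^{\s_n}=\h^*/\s_n$ dual to the inclusion $\C[\h^*]^{\s_n}\hookrightarrow Z_n$. The closed point $\chi_L\in\cX_n$ corresponds to the character $\chi_L\colon Z_n\to\C$ through which $Z_n$ acts on the simple module $L$, so $\pi(\chi_L)$ is the image of $0\in\h^*$ in $\h^*/\s_n$ exactly when the composite $\C[\h^*]^{\s_n}\hookrightarrow Z_n\xrightarrow{\chi_L}\C$ is the augmentation, i.e. exactly when $\C[\h^*]_+^{\s_n}\cdot L=0$. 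Combined with the reduction of the second paragraph this proves the corollary. The only step requiring any care is the middle one: bookkeeping the identifications $\cX_n\cong\CM_n\cong\Graad_n\cong\Grad_n\cong\QE_n$ and checking that each intertwines the relevant support map. Since every ingredient is either one of the cited commutative diagrams or an immediate consequence of a definition, I do not expect a genuine obstacle here.
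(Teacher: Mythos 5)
Your argument is correct and follows the same route as the paper: reduce Fuchsianity to $\Supp(\nu_n(\chi_L))=0$ and then identify this with the vanishing of the augmentation ideal on $L$. The paper's proof simply asserts the final equivalence in one sentence, whereas you spell out the chain $\Supp\circ\nu_n=\pi$ through diagram (\ref{eq:diagramt}), diagram (\ref{prop:somefactor}) and Proposition \ref{prop:commute120}, which is exactly the verification the paper leaves implicit.
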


\begin{proof}
The space $C$ of solutions of $D_L$ is a homogeneous space of quasi-exponential functions. As noted above, $D_L$ will be Fuchsian if and only if the support of $C$ equals zero. That is, if and only if the augmentation ideal in $\C[\h^*]^{\s_n}_+$ annihilates $L$. 
\end{proof}

\begin{example}
For each partition $\lambda$ of $n$ we have the simple $\H_n$-module $L(\lambda)$. Since the support of $L(\lambda)$ is sent to the $\Cs$-fixed point in $\OmGr_{\lambda}$, the proof of Lemma \ref{lem:schubertmatch} shows that  
$$
D_{L(\lambda)} = \prod_{i = 0}^{n-1} ( x \pa - e_i),
$$
where $e_i = n + \lambda_i- (i+1)$. 
\end{example}


\begin{thebibliography}{10}

\bibitem{End}
G.~Bellamy.
\newblock {E}ndomorphisms of {V}erma modules for rational {C}herednik algebras.
\newblock {\em In preperation}, 2013.

\bibitem{ChereDiffFourier}
I.~Cherednik.
\newblock Double affine {H}ecke algebras and difference {F}ourier transforms.
\newblock {\em Invent. Math.}, 152(2):213--303, 2003.

\bibitem{EtingofCalogeroMoser}
P.~Etingof.
\newblock {\em Calogero-{M}oser {S}ystems and {R}epresentation {T}heory}.
\newblock Zurich Lectures in Advanced Mathematics. European Mathematical
  Society (EMS), Z\"urich, 2007.

\bibitem{EG}
P.~Etingof and V.~Ginzburg.
\newblock Symplectic reflection algebras, {C}alogero-{M}oser space, and
  deformed {H}arish-{C}handra homomorphism.
\newblock {\em Invent. Math.}, 147(2):243--348, 2002.

\bibitem{CMFix}
M.~Finkelberg and V.~Ginzburg.
\newblock Calogero-{M}oser space and {K}ostka polynomials.
\newblock {\em Adv. Math.}, 172(1):137--150, 2002.

\bibitem{FultonYOungTableaux}
W.~Fulton.
\newblock {\em Young tableaux}, volume~35 of {\em London Mathematical Society
  Student Texts}.
\newblock Cambridge University Press, Cambridge, 1997.
\newblock With applications to representation theory and geometry.

\bibitem{Baby}
I.~G. Gordon.
\newblock Baby {V}erma modules for rational {C}herednik algebras.
\newblock {\em Bull. London Math. Soc.}, 35(3):321--336, 2003.

\bibitem{GriffethJack}
S.~Griffeth.
\newblock Orthogonal functions generalizing {J}ack polynomials.
\newblock {\em Trans. Amer. Math. Soc.}, 362(11):6131--6157, 2010.

\bibitem{HuybrechtsLehn}
D.~Huybrechts and M.~Lehn.
\newblock {\em The geometry of moduli spaces of sheaves}.
\newblock Cambridge Mathematical Library. Cambridge University Press,
  Cambridge, second edition, 2010.

\bibitem{KleshBook}
A.~Kleshchev.
\newblock {\em Linear and projective representations of symmetric groups},
  volume 163 of {\em Cambridge Tracts in Mathematics}.
\newblock Cambridge University Press, Cambridge, 2005.

\bibitem{MoDegen}
M.~Martino.
\newblock {B}locks of restricted rational {C}herednik algebras for
  {$G(m,d,n)$}.
\newblock {\em arXiv}, 1009.3200v1, 2010.

\bibitem{Miwa}
T.~Miwa, M.~Jimbo, and E.~Date.
\newblock {\em Solitons}, volume 135 of {\em Cambridge Tracts in Mathematics}.
\newblock Cambridge University Press, Cambridge, 2000.
\newblock Differential equations, symmetries and infinite-dimensional algebras,
  Translated from the 1993 Japanese original by Miles Reid.

\bibitem{BetheCherednik}
E.~Mukhin, V.~Tarasov, and A.~Varchenko.
\newblock Bethe algebra, {C}alogero-{M}oser space and {C}herednik algebra.
\newblock {\em arXiv}, 0906.5185v1, 2009.

\bibitem{SchubertGLN}
E.~Mukhin, V.~Tarasov, and A.~Varchenko.
\newblock Schubert calculus and representations of the general linear group.
\newblock {\em J. Amer. Math. Soc.}, 22(4):909--940, 2009.

\bibitem{KZChar}
E.~Mukhin, V.~Tarasov, and A.~Varchenko.
\newblock {KZ} characteristic variety as the zero set of classical
  {C}alogero-{M}oser {H}amiltonians.
\newblock {\em arXiv}, 1201.3990v2, 2012.

\bibitem{MTVDuality}
E.~E. Mukhin, V.~O. Tarasov, and A.~N. Varchenko.
\newblock Bispectral and {$(\mathfrak{gl}_N,\mathfrak{gl}_M)$} dualities.
\newblock {\em Funct. Anal. Other Math.}, 1(1):47--69, 2006.

\bibitem{WilsonSegalLoops}
G.~Segal and G.~Wilson.
\newblock Loop groups and equations of {K}d{V} type.
\newblock {\em Inst. Hautes \'Etudes Sci. Publ. Math.}, (61):5--65, 1985.

\bibitem{WilsonBi}
G.~Wilson.
\newblock Bispectral commutative ordinary differential operators.
\newblock {\em J. Reine Angew. Math.}, 442:177--204, 1993.

\bibitem{Wilson}
G.~Wilson.
\newblock Collisions of {C}alogero-{M}oser particles and an adelic
  {G}rassmannian.
\newblock {\em Invent. Math.}, 133(1):1--41, 1998.
\newblock With an appendix by I. G. Macdonald.

\end{thebibliography}

\def\cprime{$'$} \def\cprime{$'$} \def\cprime{$'$} \def\cprime{$'$}
  \def\cprime{$'$} \def\cprime{$'$} \def\cprime{$'$} \def\cprime{$'$}
  \def\cprime{$'$} \def\cprime{$'$} \def\cprime{$'$} \def\cprime{$'$}
  \def\cprime{$'$}

\end{document}